\newtheorem{thm}{Theorem}[section]
\newtheorem{defn}[thm]{Definition}
\newtheorem{lem}[thm]{Lemma}
\newtheorem{cor}[thm]{Corollary}
\newtheorem{prop}[thm]{Proposition}
\newtheorem{example}[thm]{Example}
\theoremstyle{definition}
\newtheorem{rmk}[thm]{Remark}
\numberwithin{equation}{section}
\def\al{\alpha}
\def\C{\mathbb{C}}
\newcommand{\D}{\mathcal{D}}
\begin{document}
	\title{Lie algebras of differential operators for Matrix valued Laguerre type polynomials}
	\author[A.L.\@ Gallo, P.\@ Rom\'an]{Andrea L.\@ Gallo, Pablo Rom\'an}
	\dedicatory{\today}
	\keywords{orthogonal polynomials, ladder relations, Laguerre polynomials}
	\thanks{2020 {\it Mathematics Subject Classification.} Primary 33C45.}
	\thanks{Partially supported by CONICET, FONCyT and SECyT-UNC}
	
	\begin{abstract}
		We study algebras of differential and difference operators acting on matrix valued orthogonal polynomials (MVOPs) with respect to a weight matrix of the form  $W^{(\nu)}_{\phi}(x) = x^{\nu}e^{-\phi(x)} W^{(\nu)}_{pol}(x)$, where $\nu>0$, $W^{(\nu)}_{pol}(x)$ is certain matrix valued polynomial and $\phi$ an entire function. We introduce a pair differential operators $\mathcal{D}$, $\mathcal{D}^{\dagger}$ which are mutually adjoint with respect to the matrix inner product induced by $W^{(\nu)}_{\phi}(x)$. We prove that the Lie algebra generated by $\mathcal{D}$ and $\mathcal{D}^{\dagger}$ is finite dimensional if and only if $\phi$ is a polynomial, giving a partial answer to a problem by M. Ismail. In the case $\phi$ polynomial, we describe the structure of this Lie algebra.  The case $\phi(x)=x$, is discussed in detail. We derive difference and differential relations for the MVOPs. We give explicit expressions for the entries of the MVOPs in terms of classical Laguerre and Dual Hahn polynomials.
	\end{abstract}
	
	\maketitle
	
	\section{Introduction}
	The theory of matrix valued orthogonal polynomials (MVOPs) was initiated by Krein 1940s, and it has since been used in various areas of mathematics and mathematical physics. These areas include spectral theory, scattering theory, tiling problems, integrable systems, and stochastic processes. For further details and insights on these subjects, refer to \cite{Arniz-2014}, \cite{DamanikPS}, \cite{DuitsK}, \cite{Geronimo}, \cite{GroeneveltIK}, \cite{IglesiaG}, and the references therein.
	
	Significant progress has been made in the past two decades towards understanding how the differential and algebraic properties of classical scalar orthogonal polynomials can be extended to the matrix valued setting. A fundamental role has been played by the connection between harmonic analysis of matrix valued functions on compact symmetric pairs and matrix valued orthogonal polynomials. In \cite {Duran1}, A. Durán poses the problem of determining families of MVOPs which are eigenfunctions of a suitable second order differential operator. In the scalar case, the answer to this problem is a classical result due to Bochner \cite{Boch}. The only families with this property are those of Hermite, Laguerre and Jacobi. The matrix valued setting turns out to be much more involved. The first explicit examples appeared in connection with spherical functions of the compact symmetric pair $(\mathrm{SU}(3),\mathrm{U}(2))$. Following \cite{Koornwinder}, a direct approach was taken in \cite{KvPR1}, \cite{KvPR2} for the case of $(\mathrm{SU}(2) \times \mathrm{SU}(2), \mathrm{diag})$, leading to a general set-up in the context of multiplicity free pairs \cite{HeckmanP}. In this context, certain properties of the orthogonal polynomials, such as orthogonality, recurrence relations, and differential equations, are understood in terms of the representation theory of the corresponding symmetric spaces. Recently, Casper and Yakimov developed a framework in \cite{Casper2} to solve the matrix Bochner problem. This involves the classification of all $N \times N$ weight matrix $W(x)$ whose associated MVOPs are eigenfunctions of a second-order differential operator.

	Given $N \in \mathbb{N}$ we consider a matrix valued function $W: [a,b] \rightarrow M_N(\mathbb{C})$ such that $W(x)$ is positive definite for all $x\in [a,b]$ and $W$ has finite moments of all order. In such a case, we say that $W$ is a weight function,  which induces matrix valued inner product
	\begin{equation}\label{eq:HermitianForm}
		\langle P,Q \rangle=\int_a^b P(x)W(x)Q(x)^*dx \in M_N(\C),
	\end{equation}
	such that for all $P,Q,R\in M_N(\C)[x]$, $T\in M_N(C)$ and $a,b\in \mathbb{C}$ 
	the following properties are satisfied
	$$
	\langle aP+bQ,R\rangle=a\langle P,R\rangle+b\langle Q,R\rangle, \qquad \langle TP,Q\rangle=T\langle P,Q\rangle, \langle P,Q\rangle^*=\langle Q,P\rangle.$$
	Moreover $\langle P,P \rangle = 0$ if and only if $P=0$. Using standard arguments, it can be shown that there exists a unique sequence $(P(x,n))_n$ monic MVOPs with respect to $W$ in the following sense:
	\begin{equation}\label{equation Hn}
		\langle P(x,n), P(x,m)\rangle = \mathcal{H}(n) \delta_{n,m}.    
	\end{equation}
	where the squared norm $\mathcal{H}(n)$ is a positive definite matrix.

	By orthogonality, the polynomials $P(x,n)$'s satisfy the following three-term recurrence
	relation:
	\begin{equation}\label{eq:three_term_monic}
		xP(x,n) = P(x,n+1) + B(n) P(x,n) + C(n) P(x,n-1)
	\end{equation}
	where $B(n), C(n) \in M_N(\mathbb{C})$ and $n \geq 1$.
	Notice that $B(n)$ and $C(n)$ satisfy 
	\begin{equation}\label{B C}
		B(n)=X(n)-X(n+1), \quad C(n)= \mathcal{H}(n) \mathcal{H}(n-1)^{-1},    
	\end{equation}
	where  $X(n)$ is the one-but-leading coefficient of $P(x,n)$ and $\mathcal{H}(n)$ as in \eqref{equation Hn}.
	Moreover, for $n \geq 2$, let $Y(n)$ denotes the second-but-leading coefficient of $P(x,n)$. Then
	\begin{equation}\label{prop Y}
		Y(n)=Y(n+1)+B(n)X(n)+C(n).    
	\end{equation}
	
	In \cite{DERom}, the authors studied difference--differential relations for a specific class of MVOPs associated with the weight $W(x)=e^{-v(x)}e^{xA}e^{xA^{\ast}}$, where $x \in \mathbb{R}$, $v(x)$ is a scalar polynomial of even degree, and $A$ is a constant matrix. There is a way of obtaining information about the matrix orthogonal polynomials by investigating two mutually adjoint operators $\mathcal{D}$ and $\mathcal{D}^\dagger$. If $v(x)$ is a polynomial of degree two, in addition to $\mathcal{D}$ and $\mathcal{D}^\dagger$, there exists a second order differential operator $D$ having the MVOPs as eigenfunctions. It turns out that $\mathcal{D}, \mathcal{D}^\dagger$ and $D$ generate a finite dimensional Lie algebra which is isomorphic to the Lie algebra of the oscillator group. The Casimir operator for this algebra is given explicitly and used to obtain information of the MVOPs.
	In this work, we solve the analogous problem for Laguerre-type weights. This case is more involved than the previous one due to the structure of the associated Lie algebra and the non-diagonality of certain formulas that involve $W$.
	
	In the scalar case, this problem is closely related to \cite[Problem 24.5.2]{Ismail}. Here Ismail proposed to study the finite dimensionality of certain Lie algebra generated by a pair of differential operators which are mutually adjoint respect to a Laguerre-type weight. More precisely, given the scalar weight $w_1(x)=x^{\alpha}e^{-\phi(x)}$ with $x>0, \,\alpha>1$ and differential operators
	$$\mathcal{D}_{1,n}=x\partial_x +x B_{n}(x), \qquad \mathcal{D}_{2,n}=-x\partial_x + x B_{n}(x)+x\phi'(x),$$
	where $\{B_{n}\}$ is a sequence of scalar polynomials, the problem asks to prove that ``\emph{The Lie algebra generated by $\mathcal{D}_{1,n}$ and $\mathcal{D}_{2,n}$ is finite dimensional if and 	only if $\phi$ is a polynomial}''.
	
	In this paper we provide a partial answer to this problem in the context of matrix valued orthogonal polynomials. We give an explicit matrix valued weight $W^{(\nu)}_{\phi}(x) =x^\nu e^{-\phi(x)} W^{(\nu)}_{pol}(x)$, where $W^{(\nu)}_{pol}(x)$ is a matrix polynomial depending on $\nu$, and differential operators 
	$$\mathcal{D} = \partial_x x + x(A-1), \quad 
	\mathcal{D}^\dagger = -\partial_x x - (1+\nu+J)+x\phi'(x)-x.$$ 
	In this case, we prove that the Lie algebra generated by is finite dimensional if and only if $\phi$ is a polynomial.	As a consequence, this solves \cite[Problem 24.5.2]{Ismail} when $B_{n}(x)=-1$ for all $n\in \mathbb{N}$.
	
	\subsection*{Outline and main results}
	
	In Section $2$ we recall some preliminaries. In particular, we introduce the left and right Fourier algebras related to the sequence of monic MVOPs.
	
	In Section $3$ for a given analytic function $\phi$ on a neighborhood of the interval $[0,\infty)$, we introduce a Laguerre type weight $W^{(\nu)}_{\phi}$ and the operators $\mathcal{D}$ , $\mathcal{D}^{\dagger}$ and prove that they are mutually adjoint with respect to $W^{(\nu)}_{\phi}(x)$. For the MVOPs $\{P_n\}$ respect to $W^{(\nu)}_{\phi}$,
	we find discrete operators $M,\, M^{\dagger}$ associated to $\mathcal{D},\, \mathcal{D}^{\dagger}$ respectively, given by the relations $M \cdot P_n = P_n \cdot \mathcal{D}$ and $M^{\dagger} \cdot P_n = P_n \cdot \mathcal{D}^{\dagger}$.
	
	In Section $4$ we study the Lie algebra $\mathfrak{g}_{\phi}$ generated by the differential operators $\mathcal{D}$, $\mathcal{D}^{\dagger}$. We prove that $\mathfrak{g}_{\phi}$ is finite dimensional if and only if $\phi$ is a polynomial. Also, for this family of Lie algebras $\{\mathfrak{g}_{\phi}\}$ we obtain that $\mathfrak{g}_{\phi}=\mathbb{C}^{2} \oplus \mathfrak{h}$ and $\mathfrak{h}$ is a solvable Lie algebra with nilradical of codimension one. Moreover, we obtain a classification of this family of Lie algebra up to isomorphisms.
	
	In Section $5$ we give an explicitly expression for $\mathcal{D}$, $\mathcal{D}^{\dagger}$, $M$ and $M^{\dagger}$ in the case $\phi(x)=x$.	In this case, we also find a symmetric second-order differential operator $D$ which have $\{P_n\}$ as eigenfunctions. We describe the Lie algebra $\mathcal{A}$ generated by $\mathcal{D}$, $\mathcal{D}^{\dagger}$ and $D$, $\mathcal{A}=\mathcal{Z}_{\mathcal{A}} \oplus [\mathcal{A},\mathcal{A}]$ where $\dim \mathcal{Z}_{\mathcal{A}}=2$ and $[\mathcal{A},\mathcal{A}]$ is isomorphic to $\mathrm{SL}(2,\mathbb{C})$. Also, we obtain some relations between $\mathcal{H}_n$, $B_n$ and $C_n$. 
	
	In Section 6, we consider the polynomials $R(x,n)=K_n P(x,n)e^{xA}$ 
	where $P(x,n)$ are the MVOPs associated with the weight $W^{(\nu)}$ and
	$K_n$ certain lower triangular matrices. Using the operator $D$,
	we show that the matrix entries of $R_n$ can be put in terms of 
	generalized Laguerre polynomials  and a family of constants $\xi(n,i,j)$'s. 
	Finally, we give two-terms recursions for the constants $\xi(n,i,j)$'s 
	and for the squared norm $\mathcal{H}_n$.
	
	Finally, in Section $7$, in the case $A=- \sum_{k=1}^{N-1} E_{k+1,k}$, and $\delta^{k}>0$ 
	satisfying two non-linear conditions (related to Pearson's equations),  we show that the constants $\xi(n,i,j)$'s are written in terms of dual Hanh polynomials.
	
	\section{Preliminaries}
	
	\label{sec:pre}
	
	This section presents the left and right Fourier algebras associated with the sequence of monic MVOPs, as developed by Casper and Yakimov in \cite{Casper2}. The results discussed in this section have been previously covered in a more comprehensive context in \cite{Casper2}.
	
	Let $Q(x,n)$ be a function $Q:\mathbb{C}\times \mathbb{N}_0 \to M_N(\mathbb{C})$ such that $Q(x,n)$ is a rational function of $x$ for fixed $n$. A differential operator of the form
	\begin{equation}
		\label{eq:DifferentialOperator}
		\D=\sum_{j=0}^n \partial_x^j F_j(x), \qquad \partial_x^j := \tfrac{d^j}{dx^j},
	\end{equation}
	where $F_j:\mathbb{C}\to M_N(\mathbb{C})$ is a rational function of $x$, acts on $Q$ from the right by
	$$(Q\cdot \D)(x,n)  = \sum_{j=0}^n (\partial_x^jQ)(x,n)\,  F_j(x).
	$$
	
	The algebra of all differential operators of the form \eqref{eq:DifferentialOperator} will be denoted by $\mathcal{M}_N$.  In addition to the right action by differential operators, we also consider a left action on $Q$ by difference operators on the variable $n$. For $j\in\mathbb{Z}$, let $\delta^{j}$ be the discrete operator which acts on a sequence $A:\mathbb{N}_0 \to M_N(\mathbb{C})$ by
	$$(\delta^j \cdot A)(n)=A(n+j).$$
	Here we assume that the value of a sequence at a negative integer is equal to zero. For given sequences $A_{-\ell},\ldots,A_k$, a discrete operator of the form
	\begin{equation}
		\label{eq:DifferenceOperator}
		M=\sum_{j=-\ell}^k A_j(n) \delta^j,
	\end{equation}
	acts on $Q$ from the left by
	\begin{align*}
		(M \cdot Q)(x,n) &= \sum_{j=-\ell}^k A_j(n) \, (\delta^j\cdot Q)(x,n) = \sum_{j=-\ell}^k A_j(n) \, Q(x,n+j).
	\end{align*}
	
	We shall denote the algebra of difference operators \eqref{eq:DifferenceOperator} by $\mathcal{N}_N$. As in \cite[Definition 2.20]{Casper2} we define:
	\begin{defn}
		The left and right Fourier algebras are given by:
		\begin{equation}
			\label{eq:definition-Fourier-algebras}
			\begin{split}
				\mathcal{F}_L(P)&=\{ M\in \mathcal{N}_N \colon \exists\, \D\in \mathcal{M}_N,\, M\cdot P = P\cdot \D \} \subset \mathcal{N}_{N},\\
				\mathcal{F}_R(P)&=\{ \D\in \mathcal{M}_N \colon \exists\, M\in \mathcal{N}_N,\, M\cdot P = P\cdot \D \}\subset \mathcal{M}_{N}.
			\end{split}
		\end{equation}
	\end{defn}
	
	The definition of the Fourier algebras directly implies a connection between the elements of $\mathcal{F}_L(P)$ and $\mathcal{F}_R(P)$. Moreover, the map
	\begin{equation*}
		\varphi\colon \mathcal{F}_L(P) \to \mathcal{F}_R(P),\qquad \text{ defined by }\quad M\cdot P = P \cdot \varphi(M),
	\end{equation*}
	is an algebra isomorphism. In \cite{Casper2} this map is called the \textit{generalized Fourier map}. More precisely, $M_{1}M_{2}\cdot P = P \cdot \varphi(M_{1})\varphi(M_{2})$ for all $M_{1},M_{2}\in \mathcal{F}_{L}(P)$. On the other hand, by the definition of $\varphi$, we have that $M_{1}M_{2}\cdot P = P\cdot \varphi(M_{1}M_{2})$.
	
	\begin{rmk}
		\label{rmk:three-term-Fourier}
		In this context, the three term recurrence relation \eqref{eq:three_term_monic} can be written as
		$$xP = P\cdot x = L\cdot P, \qquad \text{where } \quad L=\delta + B(n) + C(n)\delta^{-1}.$$
		Therefore $x\in \mathcal{F}_R$, $L\in \mathcal{F}_L$ and $\varphi(L)=x$. For every polynomial $v \in \mathbb{C}[x]$, we have
		\begin{equation*}
			P\cdot v(x) =P\cdot v(\varphi(L))= v(L)\cdot P.
		\end{equation*}
	\end{rmk}
	
	On of the crucial results from \cite{Casper2} is the existence of an adjoint operation $\dagger$ in the Fourier algebras $\mathcal{F}_L(P)$ and $\mathcal{F}_R(P)$ as described in \cite[\S 3.1]{Casper2}. To define the adjoint operation in $\mathcal{F}_L(P)$, we initially observe that the algebra of difference operators $\mathcal{N}_N$ has a $\ast$-operation defined as follows:
	\begin{equation}\label{eq:star}
		\left( \sum_{j=-\ell}^k A_j(n) \, \delta^j \right)^\ast = \sum_{j=-\ell}^k A_j(n-j)^\ast \, \delta^{-j},
	\end{equation}
	where $A_j(n-j)^\ast$ is the conjugate transpose of $A_j(n-j)$. Now, the adjoint of $M\in \mathcal{N}_N$ is given by 
	\begin{equation}\label{eq:adjointM}
		M^\dagger = \mathcal{H}(n) M^\ast \mathcal{H}(n)^{-1},
	\end{equation}
	where $\mathcal{H}(n)$ is the squared norm which we view as an difference operator of order zero. The following holds:
	$$\langle (M\cdot P)(x,n),P(x,m)\rangle = \langle P(x,n),(M^\dagger \cdot P)(x,m)\rangle.$$
	
	In \cite[Corollary 3.8]{Casper2} the authors show that every differential operator $D\in  \mathcal{F}_R(P)$ has a unique adjoint $\D^\dagger\in  \mathcal{F}_R(P)$ with the property
	$$\langle P\cdot \D, Q \rangle = \langle P,Q\cdot \D^\dagger \rangle,$$
	for all $P,Q\in M_N(\mathbb{C})[x]$. Moreover, $\varphi(M^\dagger) = \varphi(M)^\dagger$ for all $M\in \mathcal{F}_L(P)$.

	\section{Semi-classical Laguerre type solutions}

	In the sequel, we consider the following two matrices $A,J\in M_{N}(\mathbb{R})$ which satisfy
	\begin{equation}\label{A J}
		J=\sum_{k=1}^{N} k E_{k,k} \qquad A =\sum_{k=1}^{N-1} a_{k} E_{k+1,k}.
	\end{equation}
	Notice that, it is straightforward to show that
	\begin{equation}\label{corchete A J}
		[J,A]=A \quad \text{and} \quad e^{xA}J e^{-xA}=J-Ax.
	\end{equation}
	Let us consider the following weight matrix supported on the interval $[0,\infty)$:
	\begin{equation}\label{W eq}
		W^{(\nu)}_{\phi}(x) = e^{Ax} T^{(\nu)}_{\phi}(x) e^{A^\ast x},\qquad T^{(\nu)}_{\phi}(x) =  e^{-\phi(x)} \sum_{k=1}^N \delta^{(\nu)}_k x^{\nu+k}  E_{k,k},
	\end{equation}
	where $\delta^{(\nu)}_k$ is a constant real number for $1 \leq k \leq N$, and $\phi$ be an analytic function on a neighborhood of the interval $[0,\infty)$.
	In the sequel, we assume that $W(x)P(x) = 0$ has vanishing limits at the endpoints of support for any matrix polynomial $P$.

	\begin{prop}\label{ort D, Ddag}
		Let $A,J\in M_{N}(\mathbb{C})$ as in \eqref{A J}. 
		Then, the first order differential operators
		$$\mathcal{D} = \partial_x x + x(A-1), \quad 
		\mathcal{D}^\dagger = -\partial_x x - (1+\nu+J)+x\phi'(x)-x,$$
		are mutually adjoint.
	\end{prop}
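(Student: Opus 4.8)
The plan is to prove the adjoint relation $\langle P\cdot\mathcal D,Q\rangle=\langle P,Q\cdot\mathcal D^\dagger\rangle$ for all $P,Q\in M_N(\mathbb C)[x]$ by a single integration by parts, thereby reducing the whole statement to one first order matrix differential equation for $W^{(\nu)}_\phi$ (a matrix Pearson equation), which is then checked by a direct differentiation using the explicit form \eqref{W eq} and the commutation identities \eqref{corchete A J}.

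First I would spell out the right actions. Writing $W=W^{(\nu)}_\phi$ and $'=\partial_x$, one has $(P\cdot\mathcal D)(x)=xP'(x)+xP(x)(A-1)$ and $(Q\cdot\mathcal D^\dagger)(x)=-xQ'(x)-Q(x)(1+\nu+J)+(x\phi'(x)-x)Q(x)$. Then
\[
\langle P\cdot\mathcal D,Q\rangle=\int_0^\infty\big(xP'+xP(A-1)\big)WQ^*\,dx ,
\]
and I would integrate the summand $\int_0^\infty xP'WQ^*\,dx=\int_0^\infty P'\,(xWQ^*)\,dx$ by parts; the boundary contribution $\big[P\,xWQ^*\big]_0^\infty$ vanishes by the standing hypothesis on $W$ (which, using $W^*=W$, also gives $\lim PW=0$) together with $\nu>0$. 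Expanding $\partial_x(xWQ^*)=WQ^*+xW'Q^*+xWQ'^*$ and collecting terms gives
\[
\langle P\cdot\mathcal D,Q\rangle=-\int_0^\infty xPWQ'^*\,dx+\int_0^\infty P\big(xAW-W-xW-xW'\big)Q^*\,dx .
\]
On the other side, using $J^*=J$ and that $\phi'$ is real on $(0,\infty)$,
\[
\langle P,Q\cdot\mathcal D^\dagger\rangle=-\int_0^\infty xPWQ'^*\,dx+\int_0^\infty PW\big(-(1+\nu+J)+x\phi'-x\big)Q^*\,dx .
\]
The terms containing $Q'^*$ already coincide, so the proposition is equivalent to the matrix identity $xAW-W-xW-xW'=W\big(-(1+\nu+J)+x\phi'-x\big)$; after cancellation and rearrangement this reads
\[
xW'=xAW+WJ+\nu W-x\phi'(x)\,W .
\]
Note that once this is established the two integrals are \emph{literally} equal, so no density or moment-problem argument is needed.

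It then remains to verify the displayed ODE. Differentiating $W=e^{Ax}T^{(\nu)}_\phi(x)e^{A^*x}$ and using $\partial_x e^{A^*x}=e^{A^*x}A^*$ gives $W'=AW+WA^*+e^{Ax}\big(T^{(\nu)}_\phi\big)'e^{A^*x}$. Since $T^{(\nu)}_\phi$ is diagonal, a direct computation yields $\big(T^{(\nu)}_\phi\big)'=-\phi'\,T^{(\nu)}_\phi+\tfrac1x\big(\nu\,T^{(\nu)}_\phi+T^{(\nu)}_\phi J\big)$, where one uses that $T^{(\nu)}_\phi J$ multiplies the $k$-th diagonal entry by $k$. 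Conjugate-transposing the second identity in \eqref{corchete A J} and using $J^*=J$ gives $e^{-A^*x}Je^{A^*x}=J-xA^*$, hence $e^{Ax}\big(T^{(\nu)}_\phi J\big)e^{A^*x}=W\,(e^{-A^*x}Je^{A^*x})=WJ-xWA^*$. Substituting these back, the two $\pm WA^*$ terms cancel and one arrives exactly at $xW'=xAW+WJ+\nu W-x\phi'(x)W$.

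The computation has no genuine obstacle; the only point requiring care is the bookkeeping forced by non-commutativity — keeping track of which side each matrix multiplies in the right actions of $\mathcal D$ and $\mathcal D^\dagger$, in the integration by parts, and especially in deducing $e^{-A^*x}Je^{A^*x}=J-xA^*$ from the stated identity $e^{xA}Je^{-xA}=J-Ax$.
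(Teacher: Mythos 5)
Your proof is correct and follows essentially the same route as the paper's: integration by parts using the same vanishing-boundary hypothesis, followed by the identities $xT'=T(-x\phi'+\nu+J)$ and $e^{-A^{*}x}Je^{A^{*}x}=J-xA^{*}$ derived from \eqref{corchete A J}. The only difference is organizational — you isolate everything into the single Pearson-type equation $xW'=xAW+WJ+\nu W-x\phi'(x)W$ and verify it separately, whereas the paper carries the conjugated identities through the integrals; the content is identical.
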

	\begin{proof}
		Let $P,Q \in M_N(\mathbb{C}[x])$. 
		In order to simplify notation, in the rest of the proof, we denote by $W(x):=W^{(\nu)}_{\phi}(x)$ and $T(x)=W^{(\nu)}_{\phi}(x)$
		\begin{eqnarray*}
			\langle P \cdot \mathcal{D}, Q \rangle &=& \int_{0}^{\infty} (P\cdot \mathcal{D}) W(x) Q^{\ast}(x) dx\\
			&=& \int_{0}^{\infty} \left( xP'(x)+xP(x)(A-1) \right) W(x) Q^{\ast}(x) dx.
		\end{eqnarray*}    
		Notice that, since $W(x)P(x)$ has vanishing limits at the endpoints $x=0$, and $x=\infty$, integration by parts implies that
		$$\int_{0}^{\infty} xP'(x)W(x)Q^{\ast}(x)dx = -\int_{0}^{\infty} P(x) \left(xW(x)Q^{\ast}(x) \right)'  dx.$$
		On the other hand, we have that
		$$\int_{0}^{\infty} xP(x)(A-1)W(x) Q^{\ast}(x) dx=\int_{0}^{\infty} P(x)W(x) \left( W^{-1}(x) x (A-1)W(x) \right)Q^{\ast} (x)  dx,$$
		by linearity we obtain that
		\begin{eqnarray*}
			\langle P \cdot \mathcal{D}, Q \rangle &=& - \int_{0}^{\infty} P(x) \left(xW(x)Q^{\ast}(x) \right)'  dx + \int_{0}^{\infty} P(x)W(x) \left( W^{-1}(x) x (A-1)W(x) \right)Q^{\ast} (x) dx\\
			&=& -\int_{0}^{\infty} P(x)\left(W(x)Q^{\ast}(x) +x W'(x) Q^{\ast}+xW(x)(Q^{\ast}(x))'\right) dx\\ 
			&&+ \int_{0}^{\infty} P(x)W(x) \left( W^{-1}(x) x (A-1)W(x) \right)Q^{\ast} (x) dx.
		\end{eqnarray*}
		Notice that since  $\left(Q \cdot (1+x\partial_{x}) \right)^{\ast}(x)=Q^{\ast}(x)+x(Q^{\ast}(x))'$, 
		we can put
		$$\int_{0}^{\infty} P(x)(W(x)Q^{\ast}(x) +W(x)x(Q^{\ast}(x))') dx=\int_{0}^{\infty} P(x)W(x)\left(Q \cdot (1+x\partial_{x}) \right)^{\ast}(x)  dx.$$
		On the other hand,
		$$\int_{0}^{\infty} P(x) x W'(x) Q^{\ast}(x) dx=\int_{0}^{\infty} P(x) W(x) xW^{-1}(x) W'(x) Q^{\ast}(x) dx.$$
		Hence, we obtain that
		\begin{eqnarray*}
			\langle P \cdot \mathcal{D}, Q \rangle &=& - \int_{0}^{\infty} P(x)W(x)\left(Q \cdot (1+x\partial_{x}) \right)^{\ast}(x)  dx - \int_{0}^{\infty} P(x) W(x) xW^{-1}(x) W'(x) Q^{\ast}(x) dx\\ 
			&&+ \int_{0}^{\infty} P(x)W(x) \left( W^{-1}(x) x (A-1)W(x) \right)Q^{\ast} (x) dx.
		\end{eqnarray*}
		By \eqref{W eq} we have that
		\begin{align*}
			W^{-1}(x) W'(x) &=e^{-A^{\ast} x} \left({T}^{-1}(x)\,A \,T(x)+T ^{-1}(x)T'(x)+ A^{\ast}\right) e^{A^{\ast} x},\\
			W^{-1}(x) x (A-1)W(x) &= e^{-A^{\ast}x} \left(x T^{-1}(x) \,A \,T(x)  - x\right)e^{A^{\ast}x}.
		\end{align*}
		Thus, we obtain 
		\begin{eqnarray*}
			\langle P \cdot \mathcal{D}, Q \rangle &=& - \int_{0}^{\infty} P(x)W(x)\left(Q \cdot (1+x\partial_{x}) \right)^{\ast}(x)  dx\\
			&&- \int_{0}^{\infty} P(x) W(x)  e^{-A^{\ast} x} \left(xT^{-1}(x) \,A \, T(x)+x T^{-1}(x)T'(x)+ x A^{\ast}\right) e^{A^{\ast} x} Q^{\ast}(x) dx\\ 
			&&+ \int_{0}^{\infty} P(x)W(x) e^{-A^{\ast}x} \left(x T^{-1}(x) \,A \,T(x)  - x\right)e^{A^{\ast}x} Q^{\ast}(x)dx\\
			&=& - \int_{0}^{\infty} P(x)W(x)\left(Q \cdot (1+x\partial_{x}) \right)^{\ast}(x)  dx\\
			&&- \int_{0}^{\infty} P(x) W(x)  e^{-A^{\ast} x} \left(x T^{-1}(x)T'(x)+ x A^{\ast}+x\right) e^{A^{\ast} x} Q^{\ast}(x) dx.
		\end{eqnarray*}
		By taking into account that $xT'(x)=T(x) (-x\phi'(x)+\nu+J)$, we obtain that
		\begin{eqnarray*}
			\langle  P \cdot \mathcal{D}, Q \rangle &=& - \int_{0}^{\infty} P(x)W(x)\left(Q \cdot (1+x\partial_{x}) \right)^{\ast}(x)  dx\\
			&-& \int_{0}^{\infty} P(x) W(x)  e^{-A^{\ast} x} ( -x\phi'(x)+\nu+J+xA^{\ast}+x) e^{A^{\ast}x} Q^{\ast}(x) dx.
		\end{eqnarray*}
		Notice that  the second expresion of the right hand of the above equality is
		$$e^{-A^{\ast} x} ( -x\phi'(x)+\nu+J+xA^{\ast}+x) e^{A^{\ast}x} = -x\phi'(x)+ \nu+ e^{-A^{\ast} x} J e^{A^{\ast}x}+xA^{\ast}+x.$$
		On the other hand, the equation $e^{xA}J e^{-xA}=J-Ax$
		implies that $e^{-A^{\ast} x} J e^{A^{\ast}x}=J-A^{\ast}x$. Hence, we obtain that
		
		\begin{eqnarray*}
			\langle P \cdot \mathcal{D}, Q \rangle  &=& - \int_{0}^{\infty} P(x)W(x)\left(Q \cdot (1+x\partial_{x}) \right)^{\ast}(x)  dx\\
			&&- \int_{0}^{\infty} P(x) W(x)   (x -x\phi'(x)+(\nu+J-A^{\ast}x+xA^{\ast}) Q^{\ast}(x) dx\\ 
			&=& \int_{0}^{\infty} P(x)W(x)\left(Q \cdot -(x\partial_{x}+x-x\phi'(x)+(\nu+J+1) \right)^{\ast}(x)  dx\\
			&=&\langle P,Q\cdot \mathcal{D}^{\dagger}\rangle.
		\end{eqnarray*}
		Therefore, the operators $\mathcal{D}$ and $\mathcal{D}^\dagger$ are mutually adjoint, as asserted.
	\end{proof}
	
	By the above theorem, since $\mathcal{D} = \partial_x x + x(A-1)$ and
	$\mathcal{D}^\dagger = -\partial_x x - (1+\nu+J)+x\phi'(x)-x$, then we obtain that
	\begin{equation}\label{D-Ddaga eqn}
		\mathcal{D}^\dagger = -\mathcal{D} + (Ax-J) -(1+\nu) + x\phi'(x)-2x.    
	\end{equation}
	
	\begin{lem}\label{AJ lemma}
		Let $A,J\in M_{N}(\mathbb{C})$ be matrices as in \eqref{A J}. 
		Then, $\mathcal{C}=Ax-J$  is a symmetric operator respect to the weight $W:=W^{(\nu)}_{\phi}$ as in \eqref{W eq}. 
		Moreover, if $P(x,n)$'s are monic MVOPs associated with the weight $W$, such that
		$$P\cdot\mathcal{C}=M_{\mathcal{C}}\cdot P, \quad \text{then} \quad M_{\mathcal{C}}=\sum_{i=-1}^{1} U_j(n)\delta^j,$$
		with $$U_1(n):=A,\qquad U_0(n):=X(n)A-AX(n+1)-J,$$ $$U_{-1}(n):= Y(n)A-AY(n+1) + [J,X(n)] + (AX(n+1)-X(n)A)X(n),$$ 
		where $X(n)$ and $Y(n)$ are the coefficients of the $(n-1)$-term and $(n-2)$-term of $P(x,n)$ respectively.
	\end{lem}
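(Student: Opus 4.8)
The plan is to prove the two assertions in turn: first that $\mathcal{C}=Ax-J$ is symmetric for $W=W^{(\nu)}_{\phi}$, and then to pin down $M_{\mathcal C}$ by a degree count, using the symmetry to control the lower shifts.

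For symmetry I would use \eqref{D-Ddaga eqn}, which exhibits $\mathcal{C}$ as $\mathcal{D}+\mathcal{D}^{\dagger}+\bigl(2x-x\phi'(x)+1+\nu\bigr)$. Then $\langle P\cdot(\mathcal D+\mathcal D^{\dagger}),Q\rangle=\langle P,Q\cdot(\mathcal D+\mathcal D^{\dagger})\rangle$ because $\mathcal D$ and $\mathcal D^{\dagger}$ are mutually adjoint (Proposition~\ref{ort D, Ddag}), and $\langle P\cdot f,Q\rangle=\langle P,Q\cdot f\rangle$ for multiplication by any real scalar function $f$, in particular $f=2x-x\phi'(x)+1+\nu$; adding, $\langle P\cdot\mathcal C,Q\rangle=\langle P,Q\cdot\mathcal C\rangle$ for all $P,Q\in M_N(\mathbb{C})[x]$. (If one prefers to avoid any reality assumption on $\phi$, the same identity follows directly from \eqref{W eq}: write $W=e^{Ax}Te^{A^{\ast}x}$, observe $[J,T]=0$ since $J$ and $T$ are diagonal, rewrite \eqref{corchete A J} as $e^{-Ax}Je^{Ax}=J+Ax$ and $e^{xA^{\ast}}Je^{-xA^{\ast}}=J+A^{\ast}x$, and derive the pointwise identity $x(AW-WA^{\ast})=JW-WJ$, which is exactly the symmetry of $\mathcal C$ once the matrix factors are carried through the integral.)

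For the shape of $M_{\mathcal C}$: with the conventions of Section~\ref{sec:pre}, $\mathcal{C}$ acts on the right by $(P\cdot\mathcal{C})(x,n)=xP(x,n)A-P(x,n)J$, which has degree $n+1$, so there are unique matrices $\Lambda_{n,m}$ with $P(x,n)\cdot\mathcal C=\sum_{m=0}^{n+1}\Lambda_{n,m}P(x,m)$ and $\Lambda_{n,m}=0$ for $m>n+1$; in particular $M_{\mathcal C}=\sum_j U_j(n)\delta^{j}$ with $U_j(n)=\Lambda_{n,n+j}$ and top shift at most $1$. Pairing this expansion with $P(x,m)$ and using $\langle TP,Q\rangle=T\langle P,Q\rangle$, $\langle P,TQ\rangle=\langle P,Q\rangle T^{\ast}$ together with the symmetry of $\mathcal C$, one obtains $\Lambda_{n,m}\mathcal{H}(m)=\mathcal{H}(n)\Lambda_{m,n}^{\ast}$; since $\Lambda_{m,n}=0$ whenever $n>m+1$ and $\mathcal{H}(m)$ is invertible, this forces $\Lambda_{n,m}=0$ for $m<n-1$. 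Hence $M_{\mathcal C}=U_1(n)\delta+U_0(n)+U_{-1}(n)\delta^{-1}$. Finally, substituting $P(x,k)=x^{k}I+X(k)x^{k-1}+Y(k)x^{k-2}+\cdots$ and comparing, on the two sides of $P(x,n)\cdot\mathcal C=U_1(n)P(x,n+1)+U_0(n)P(x,n)+U_{-1}(n)P(x,n-1)+(\text{degree}\le n-2)$, the coefficients of $x^{n+1}$, $x^{n}$ and $x^{n-1}$ yields, respectively, $U_1(n)=A$, then $AX(n+1)+U_0(n)=X(n)A-J$, and then $AY(n+1)+U_0(n)X(n)+U_{-1}(n)=Y(n)A-X(n)J$; solving these and using $JX(n)-X(n)J=[J,X(n)]$ produces the three displayed formulas.

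I expect no deep step: the structural content is the symmetry of $\mathcal C$ and the resulting symmetry of the shift set of $M_{\mathcal C}$, and everything else is bookkeeping. The point that will require attention throughout is the side on which matrices sit — $A$ and $J$ multiply $P(x,n)$ on the right whereas the $U_j(n)$ multiply on the left — and this distinction must be respected both in the identity $\Lambda_{n,m}\mathcal H(m)=\mathcal H(n)\Lambda_{m,n}^{\ast}$ and in the coefficient comparison.
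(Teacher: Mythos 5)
Your argument is correct and follows the paper's scheme exactly: the degree count caps the upward shift of $M_{\mathcal C}$ at $1$, self-adjointness truncates it below $-1$, and the formulas for $U_1,U_0,U_{-1}$ come from comparing the coefficients of $x^{n+1}$, $x^{n}$, $x^{n-1}$. The only point where you take a genuinely different (and cleaner) route is the symmetry of $\mathcal C$: the paper re-runs the integration-by-parts computation of Proposition~\ref{ort D, Ddag}, whereas you read symmetry off from \eqref{D-Ddaga eqn} by writing $\mathcal C=\mathcal D+\mathcal D^{\dagger}+\bigl(2x-x\phi'(x)+1+\nu\bigr)$ and using that a sum of two mutually adjoint operators and a real scalar multiplier is symmetric; this is precisely the observation the paper itself records in Lemma~\ref{lema elemento central} and the remark following it, and your caveat about the reality of $\phi$ on $[0,\infty)$ (with the pointwise identity $x(AW-WA^{\ast})=JW-WJ$ as a fallback, which does hold since $[J,T]=0$) covers the only loose end. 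Your relation $\Lambda_{n,m}\mathcal H(m)=\mathcal H(n)\Lambda_{m,n}^{\ast}$ is just an unwinding of the adjoint operation \eqref{eq:adjointM} that the paper invokes, and the three coefficient equations you display do solve to the stated $U_j(n)$.
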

	\begin{proof}
		In the same manner as Proposition \ref{ort D, Ddag}, it can be shown that $\mathcal{C}$ is an symmetric operator respect with $W$.
		
		If we put $M_{\mathcal{C}}=\sum U_{j}(n)\delta^j$ such that $P\cdot\mathcal{C}=M_{\mathcal{C}}\cdot P$.
		By taking into account that $\mathcal{C}$  increases the degree of any polynomial in $1$, we obtain that $U_{j}(n)=0$ for $j>1$. On the other hand, since $\mathcal{C}$ is an symmetric operator respect to $W$, we have that $M_{\mathcal{C}}=M^{\dagger}_{\mathcal{C}}$ and so $U_{j}(n)=0$ for $j<-1$.
		
		The formulas for $U_{-1}(n),U_{0}(n)$ and $U_{1}(n)$ can be obtained by direct computation from $P\cdot\mathcal{C}=M_{\mathcal{C}}\cdot P$.
	\end{proof}
	
	\begin{rmk}
		Since $\mathcal{C}=Ax-J=(Ax-J)^{\dagger}$, then $M_{\mathcal{C}}=M_{\mathcal{C}}^{\dagger}$.
		Thus, from equations \eqref{eq:adjointM} we have that 
		\begin{align*}
			U_1(n) &=\mathcal{H}(n) U_{-1}(n+1)^{\ast}\mathcal{H}(n+1)^{-1},\\ 
			U_0(n) &=\mathcal{H}(n)U_0(n)^{\ast}\mathcal{H}(n)^{-1},
		\end{align*}
		and so we obtain that
		$$A=\mathcal{H}(n)\Big(Y(n+1)A-AY(n+2)+[J,X(n+1)]+(AX(n+2)-X(n+1)A)X(n+1)\Big)^{\ast} \mathcal{H}(n+1)^{-1},$$
		$$X(n)A-AX(n+1)-J=\mathcal{H}(n) \Big(X(n)A-AX(n+1)-J\Big)^{\ast} \mathcal{H}(n)^{-1}.$$ 
	\end{rmk}
	
	\begin{thm}\label{ Accion general D M}
		Let $W:=W^{(\nu)}_{\phi}$ be a matrix weight as in \eqref{W eq}, 
		with monic MVOPs $P(x,n)$ such that
		$$\mathcal{D}= \partial_{x} x + (A-1)x, \qquad \mathcal{D}^{\dagger}= -\mathcal{D} + \mathcal{C} + v'(x)$$
		for some polynomial $v(x)$ of degree $k$ and $\mathcal{C}=Ax-J$. 
		If $X(n)$ and $Y(n)$ are the coefficients of the $(n-1)$-term and $(n-2)$-term of $P(x,n)$ respectively. 
		Then, the monic polynomials $P(x,n)$ satisfy the following relation
		\begin{equation}\label{PDM}
			P\cdot\mathcal{D}=M\cdot P, \qquad M= \sum_{j=-k+1}^{1} A_{j}(n)\delta^j    
		\end{equation}
		with 
		{\small $$A_1(n)=A-1,\quad A_0(n)=n+X(n)A - A X(n+1)-B(n),$$ 
			$$A_{-1}(n)=(n-1)X(n)+Y(n)(A-1)-(A-1)Y(n+1)-A_{0}(n)X(n),$$
			$$A_j(n)=(v'(L))_j(n), \quad -k+1 <j< -1$$}
		where $B(n)$ is given by \eqref{B C}.
	\end{thm}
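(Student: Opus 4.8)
The argument has an existence step, which fixes the shape $M=\sum_{j=-k+1}^{1}A_j(n)\delta^j$ together with the identity $P\cdot\mathcal D=M\cdot P$, and a computation step, where the coefficients are read off by two complementary devices: matching powers of $x$ (this yields $A_1,A_0,A_{-1}$) and the adjoint relation in the Fourier algebra (this yields $A_j$ for $j\le-2$). For the existence step, note first that $A-1$ is invertible (since $A$ is strictly lower triangular), so $\mathcal D$ sends a polynomial of degree $n$ to one of degree exactly $n+1$; thus for each $n$ the matrix polynomial $P(\cdot,n)\cdot\mathcal D$ has degree $n+1$ and expands uniquely as $P(\cdot,n)\cdot\mathcal D=\sum_{m\le n+1}c_{n,m}\,P(\cdot,m)$ with $c_{n,m}\in M_N(\mathbb C)$. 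Pairing with $P(\cdot,m)$ and using that $\mathcal D$ and $\mathcal D^\dagger$ are mutually adjoint (Proposition \ref{ort D, Ddag}, together with \eqref{D-Ddaga eqn}) yields
\begin{equation*}
 c_{n,m}\,\mathcal H(m)=\langle P(\cdot,n)\cdot\mathcal D,\,P(\cdot,m)\rangle=\langle P(\cdot,n),\,P(\cdot,m)\cdot\mathcal D^\dagger\rangle .
\end{equation*}
Since $\mathcal D^\dagger=-\mathcal D+\mathcal C+v'(x)$ raises degrees by at most $\max(1,k-1)=k-1$, the polynomial $P(\cdot,m)\cdot\mathcal D^\dagger$ has degree $\le m+k-1$ and is orthogonal to $P(\cdot,n)$ whenever $n>m+k-1$; hence $c_{n,m}=0$ unless $n-k+1\le m\le n+1$. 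Setting $A_j(n):=c_{n,n+j}$ gives $M=\sum_{j=-k+1}^{1}A_j(n)\delta^j\in\mathcal N_N$, and by construction $P\cdot\mathcal D=M\cdot P$, so $\mathcal D\in\mathcal F_R(P)$, $M\in\mathcal F_L(P)$ and $\varphi(M)=\mathcal D$.

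For the coefficients $A_1,A_0,A_{-1}$, write $P(x,n)=x^nI+X(n)x^{n-1}+Y(n)x^{n-2}+\cdots$ and use $(P\cdot\mathcal D)(x,n)=xP'(x,n)+xP(x,n)(A-1)$. On the right-hand side of $P\cdot\mathcal D=M\cdot P$ only the terms $A_j(n)P(\cdot,n+j)$ with $j\ge-1$ reach degree $\ge n-1$, since $\deg P(\cdot,n+j)=n+j$. Comparing coefficients of $x^{n+1}$ gives $A_1(n)=A-1$; comparing coefficients of $x^{n}$ gives $A_1(n)X(n+1)+A_0(n)=n+X(n)(A-1)$, which after substituting $B(n)=X(n)-X(n+1)$ from \eqref{B C} becomes $A_0(n)=n+X(n)A-AX(n+1)-B(n)$; comparing coefficients of $x^{n-1}$ gives $A_1(n)Y(n+1)+A_0(n)X(n)+A_{-1}(n)=(n-1)X(n)+Y(n)(A-1)$, i.e. $A_{-1}(n)=(n-1)X(n)+Y(n)(A-1)-(A-1)Y(n+1)-A_0(n)X(n)$. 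These are elementary computations with the monic expansion and require no further input.

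For the remaining coefficients $A_j$ with $-k+1<j<-1$, apply the adjoint operation. Since $\varphi(M^\dagger)=\varphi(M)^\dagger=\mathcal D^\dagger$, while also $\varphi\bigl(-M+M_{\mathcal C}+v'(L)\bigr)=-\mathcal D+\mathcal C+v'(x)=\mathcal D^\dagger$ — here $M_{\mathcal C}$ is as in Lemma \ref{AJ lemma} ($P\cdot\mathcal C=M_{\mathcal C}\cdot P$) and $v'(L)$ as in Remark \ref{rmk:three-term-Fourier} ($P\cdot v'(x)=v'(L)\cdot P$) — injectivity of $\varphi$ on $\mathcal F_L(P)$ forces $M^\dagger=-M+M_{\mathcal C}+v'(L)$, that is $M+M^\dagger=M_{\mathcal C}+v'(L)$. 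By \eqref{eq:star}--\eqref{eq:adjointM} and the order range of $M$ just established, $M^\dagger$ is supported on $\delta^i$ with $i\ge-1$, and by Lemma \ref{AJ lemma} so is $M_{\mathcal C}$; comparing $\delta^j$-coefficients in $M+M^\dagger=M_{\mathcal C}+v'(L)$ for $j\le-2$ therefore leaves $A_j(n)=(v'(L))_j(n)$, which in particular covers $-k+1<j<-1$.

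The only genuinely delicate point is the existence step: one must rule out that the lower $\delta$-order of $M$ grows with $n$, and this is precisely where the degree bound on $\mathcal D^\dagger$ — equivalently the mutual adjointness of Proposition \ref{ort D, Ddag} — is indispensable. Everything else is bookkeeping with the monic expansion and the Fourier correspondence.
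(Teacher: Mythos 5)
Your proof is correct and follows essentially the same route as the paper: the coefficients $A_1,A_0,A_{-1}$ by matching powers of $x$ in the monic expansion, and the coefficients for $j\le -2$ by exploiting the mutual adjointness of $\mathcal{D}$ and $\mathcal{D}^{\dagger}$ together with the fact that $\mathcal{D}$ and $\mathcal{C}$ raise degree by at most one, so that only $v'(x)$ survives (the paper does this by computing $\langle P\cdot\mathcal{D},\delta^j\cdot P\rangle$ directly, you by comparing $\delta^j$-coefficients in $M+M^{\dagger}=M_{\mathcal{C}}+v'(L)$, which is the same mechanism). Your explicit existence/band-structure step for $M$ is a welcome detail the paper leaves implicit; the only nit is that $\max(1,k-1)=k-1$ presupposes $k\ge 2$, but for $k=1$ the conclusion is anyway subsumed by Corollary \ref{coro v grado 1}.
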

	\begin{proof}
		Clearly, the formulas for $A_{j}(n)$ with $j=-1,0,1$ can be derived from the equalities in $\eqref{PDM}$ by using the definition of $\mathcal{D}$.
		
		For $j<-1$, we have that
		\begin{eqnarray*}
			A_{j}(n) &=& \langle P\cdot \mathcal{D},\delta^j \cdot P \rangle \mathcal{H}(n-j)^{-1}=\langle P, \delta^j \cdot P \cdot \mathcal{D}^{\dagger}\rangle \mathcal{H}(n-j)^{-1}\\
			& =& \langle P, \delta^j \cdot P \cdot v'(x)\rangle\mathcal{H}(n-j)^{-1}=\langle P\cdot v'(x), \delta^j \cdot P\rangle\mathcal{H}(n-j)^{-1}\\
			&= & \langle v'(L)\cdot P, \delta^{j}\cdot P \rangle\mathcal{H}(n-j)^{-1},
		\end{eqnarray*}
		where we have used that $\langle P, \delta^j \cdot P\cdot \mathcal{D}\rangle$ and $\langle P, \delta^j \cdot P\cdot \mathcal{C}\rangle$ are both zero for $j<-1$ in the third equality, and the fact that $v'(x)$ is a scalar funtion in the fourth one. Then, we have that 
		$$A_{j}(n)= (v'(L))_j(n) \quad \text{for $j<-1$}.$$
		To complete the proof, notice that $(v'(L))_{j}(n)=0$ for $j\leq -k$.
	\end{proof}
	
	As a direct consequence, we obtain the following corollary.
	\begin{cor}\label{coro v grado 1}
		In the same hypothesis as in Theorem \ref{ Accion general D M}.
		If the polynomial $v$ has degree $1$, then the discrete operator $M$ associated  with $\mathcal{D}$ satisfies
		$$M=  A_{0}(n)+(A-1)\delta$$
		with $A_{0}(n)$ as in Theorem \ref{ Accion general D M}. Moreover, in this case we have that
		{\small\begin{equation}\label{fla A-1n}
				(n-1)X(n)+Y(n)(A-1)-(A-1)Y(n+1)-\Big( n+X(n)A - A X(n+1)-B(n)\Big)X(n) = 0.
		\end{equation}}
	\end{cor}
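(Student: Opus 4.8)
The plan is to obtain the statement by reading Theorem~\ref{ Accion general D M} in the degenerate case $k=1$, so the proof is a short bookkeeping argument rather than a new computation.

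First I would observe that when $v$ has degree $1$ the index set $\{\,j\in\Z:\ -k+1<j<-1\,\}$ appearing in Theorem~\ref{ Accion general D M} is empty, so none of the middle coefficients $A_j(n)=(v'(L))_j(n)$ occurs. Hence the discrete operator $M$ determined by $P\cdot\mathcal{D}=M\cdot P$ is supported only on the shifts $\delta^{0}$ and $\delta^{1}$, i.e.\ $M=A_0(n)+A_1(n)\delta$. Plugging in the values $A_1(n)=A-1$ and $A_0(n)=n+X(n)A-AX(n+1)-B(n)$ provided by Theorem~\ref{ Accion general D M} gives $M=A_0(n)+(A-1)\delta$, which is the first assertion.

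For the second assertion, Theorem~\ref{ Accion general D M} still records the formula
\[
A_{-1}(n)=(n-1)X(n)+Y(n)(A-1)-(A-1)Y(n+1)-A_0(n)X(n)
\]
for the coefficient of $\delta^{-1}$ in the expansion of $M$. Since this shift does not occur when $k=1$ (the index $-1$ lies strictly below the lower end $-k+1=0$ of the summation range), that coefficient must vanish identically; substituting the expression for $A_0(n)$ into $A_{-1}(n)=0$ produces precisely the displayed identity \eqref{fla A-1n}.

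I do not expect a genuine obstacle here: the corollary is an immediate specialisation of Theorem~\ref{ Accion general D M}, obtained by reading off the case $k=1$. The one point I would make explicit is the internal consistency of the two formulas, namely that the stated expression for $A_{-1}(n)$ does collapse to $0$; this one confirms by inserting $B(n)=X(n)-X(n+1)$ and the relation $Y(n)=Y(n+1)+B(n)X(n)+C(n)$ from \eqref{B C} and \eqref{prop Y} into \eqref{fla A-1n} and simplifying.
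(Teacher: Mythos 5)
Your derivation of the first assertion (the formulas for $A_1(n)$, $A_0(n)$ and the vanishing of $A_j(n)$ for $j\le -2$) is fine and agrees with the paper. The problem is the second assertion. You obtain $A_{-1}(n)=0$ by observing that the index $-1$ lies below the lower limit $-k+1=0$ of the sum displayed in Theorem \ref{ Accion general D M}. But that lower limit is only justified, in the theorem's proof, by the orthogonality argument for $j<-1$: there one uses that $\langle P,\delta^{j}\cdot P\cdot\mathcal{D}\rangle$ and $\langle P,\delta^{j}\cdot P\cdot\mathcal{C}\rangle$ vanish, which fails for $j=-1$ because $\mathcal{D}$ and $\mathcal{C}$ raise degrees by one (indeed the theorem itself records a generically nonzero formula for $A_{-1}(n)$). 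So for $k=1$ the theorem, as proved, only gives $A_j(n)=0$ for $j\le -2$, and reading the vanishing of the $\delta^{-1}$ coefficient off the summation range is circular: that vanishing \emph{is} the content of the corollary. The paper supplies the missing step by a different mechanism: since $v'$ is constant, $\mathcal{D}^{\dagger}$ does not raise degrees, hence the difference operator $M^{\dagger}$ associated with $\mathcal{D}^{\dagger}$ has no $\delta^{+1}$ term; by the adjoint relation \eqref{eq:adjointM} the $\delta^{+1}$ coefficient of $M^{\dagger}$ is $\mathcal{H}(n)A_{-1}(n+1)^{\ast}\mathcal{H}(n+1)^{-1}$, so $A_{-1}(n)=0$. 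Some version of this degree/adjointness argument is indispensable and is absent from your proposal.

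Your closing ``consistency check'' is also not correct. Substituting $B(n)=X(n)-X(n+1)$ and $Y(n)=Y(n+1)+B(n)X(n)+C(n)$ from \eqref{B C} and \eqref{prop Y} into \eqref{fla A-1n} does not make it collapse to $0=0$; after simplification it becomes
\begin{equation*}
Y(n)A-AY(n+1)+\bigl(AX(n+1)-X(n)A\bigr)X(n)=X(n)+C(n),
\end{equation*}
which is a genuine, non-formal identity for this particular family (it is exactly the relation exploited later in the proof of Proposition \ref{prop Q's}). If \eqref{fla A-1n} were a formal consequence of the three-term recurrence it would hold for every weight and every $v$, which it does not; its validity rests precisely on the adjointness argument above.
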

	\begin{proof}
		By Theorem \ref{ Accion general D M}, 
		since $\deg (v) < 2$ we obtain that $A_{j}(n)=0$ for all $j<-1$.
		On the other hand, notice that since $v$ has degree $1$, 
		then $v'(x)$ is a constant function and so the operator
		$\mathcal{D}^{\dagger}$ does not increase degrees. 
		This implies that $A_{1}^{\dagger}=0$ and therefore 
		$$A_{-1}(n)=(n-1)X(n)+Y(n)(A-1)-(A-1)Y(n+1)-A_{0}(n)X(n) = 0,$$ 
		the formula for $A_{1}(n)$ and $A_{0}(n)$ are consequence of Theorem \ref{ Accion general D M}.
	\end{proof}
	\section{Lie algebras associated to orthogonal polynomials}
	
	In this section, we solve a particular case of the problem proposed by Ismail in \cite[Problem 24.5.2]{ISMAIL2019235}, as described in the introduction. For this purpose, we study the structure of a Lie algebra related with the operators $\mathcal{D}$ and $\mathcal{D}^{\dagger}$. 
	
	Recall that if $\mathfrak{g}$ is a finite dimensional Lie algebra, and if {$\mathfrak{g}^{j}$} and $\mathfrak{g}_j$ denote the following recursions
	$$\mathfrak{g}^0=\mathfrak{g}_0=\mathfrak{g}, \quad \mathfrak{g}^{j+1}=[\mathfrak{g}^j,\mathfrak{g}^{j}] \quad \text{and } \quad \mathfrak{g}_{j+1}=[\mathfrak{g},\mathfrak{g}_{j}],$$
	then $\mathfrak{g}$ is called solvable (nilpotent) if $\mathfrak{g}^j=0$ for some $j$ (if $\mathfrak{g}_j=0$ for some $j$).
	Clearly, any nilpotent Lie algebra is solvable.
	The radical (nilradical) of $\mathfrak{g}$ is its maximal solvable ideal (maximal nilpotent ideal) of $\mathfrak{g}$.
	We will denote by $\mathrm{Rad}(\mathfrak{g})$ and $\mathrm{Nil}(\mathfrak{g})$ to the radical and nilradical of $\mathfrak{g}$, respectively.

	\subsection*{Lie algebra generated by $\mathcal{D}$ and $\mathcal{D}^{\dagger}$}

	\begin{lem}\label{bracket D x xphi}
		Let $A,J\in M_{N}(\mathbb{C})$ as in \eqref{A J} and let $\phi$ an entire function over $\mathbb{C}$, let us consider the operators
		$$\mathcal{D} = \partial_x x + x(A-1), \quad 
		\mathcal{D}^\dagger = -\partial_x x - (1+\nu+J)+x\phi'(x)-x.$$
		If $x$ and $x^{j}\phi^{(j)}(x)$  act over matrix valued polynomials by right constant multiplication, then we have that 
		$$[\mathcal{D},x] = -x,\qquad [\mathcal{D}^\dagger,x]=x,\qquad [\mathcal{D},\mathcal{D}^\dagger] = -x^2\phi^{(2)}(x)+(2-\phi'(x))x,$$
		$$[\mathcal{D},\phi^{(j)}(x)x^j] =  -(jx^{j}\phi^{(j)}(x)+x^{j+1}\phi^{(j+1)}(x)) = -[\mathcal{D}^{\dagger},\phi^{(j)}(x)x] \quad \text{for all $j\ge 1$}.$$	
	\end{lem}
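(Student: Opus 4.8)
The plan is to establish each identity by directly computing the right action on a generic matrix polynomial $P\in M_N(\mathbb{C})[x]$. Recall that for operators $E,F$ acting on the right one has $P\cdot(EF)=(P\cdot E)\cdot F$, hence $P\cdot[E,F]=(P\cdot E)\cdot F-(P\cdot F)\cdot E$, and that unwinding the definitions gives $P\cdot\mathcal{D}=xP'+xP(A-1)$ and $P\cdot\mathcal{D}^\dagger=-xP'-P(1+\nu+J)+x\phi'(x)P-xP$, while $P\cdot g=g(x)P(x)$ for any scalar function $g$. The only external inputs are the product rule and the relation $[J,A]=A$ from \eqref{corchete A J}.

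First I would treat all the scalar multipliers at once. For a scalar function $g=g(x)$, expanding $(P\cdot\mathcal{D})\cdot g-(P\cdot g)\cdot\mathcal{D}$ the terms $gxP'$ and $gxP(A-1)$ cancel against $xgP'$ and $xgP(A-1)$, leaving exactly $-xg'(x)P$; the same computation with $\mathcal{D}^\dagger$ (now the $P(1+\nu+J)$, $x\phi'(x)P$ and $xP$ pieces all commute through the scalar $g$ and cancel) leaves $xg'(x)P$. Thus, as right-multiplication operators,
\[
[\mathcal{D},g]=-xg'(x),\qquad [\mathcal{D}^\dagger,g]=xg'(x),\qquad\text{so}\qquad [\mathcal{D},g]=-[\mathcal{D}^\dagger,g].
\]
Taking $g=x$ yields $[\mathcal{D},x]=-x$ and $[\mathcal{D}^\dagger,x]=x$; taking $g=\phi^{(j)}(x)x^j$ and using $\bigl(\phi^{(j)}(x)x^j\bigr)'=jx^{j-1}\phi^{(j)}(x)+x^j\phi^{(j+1)}(x)$ yields
\[
[\mathcal{D},\phi^{(j)}(x)x^j]=-\bigl(jx^{j}\phi^{(j)}(x)+x^{j+1}\phi^{(j+1)}(x)\bigr)=-[\mathcal{D}^\dagger,\phi^{(j)}(x)x^j],
\]
which is the last asserted identity.

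For $[\mathcal{D},\mathcal{D}^\dagger]$ I would avoid a head-on computation and instead use \eqref{D-Ddaga eqn}, which reads $\mathcal{D}^\dagger=-\mathcal{D}+\mathcal{C}-(1+\nu)+x\phi'(x)-2x$ with $\mathcal{C}=Ax-J$. Since $-(1+\nu)$ is a scalar and $[\mathcal{D},-\mathcal{D}]=0$, we get $[\mathcal{D},\mathcal{D}^\dagger]=[\mathcal{D},\mathcal{C}]+[\mathcal{D},x\phi'(x)]-2[\mathcal{D},x]$, and by the previous step $[\mathcal{D},x\phi'(x)]=-\bigl(x\phi'(x)+x^2\phi''(x)\bigr)$ and $-2[\mathcal{D},x]=2x$. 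It remains to check $[\mathcal{D},\mathcal{C}]=0$, which is the only place the matrix structure enters: expanding $(P\cdot\mathcal{D})\cdot\mathcal{C}-(P\cdot\mathcal{C})\cdot\mathcal{D}$ with $P\cdot\mathcal{C}=xPA-PJ$ and collecting powers of $x$, the coefficient of $x^2$ is $P\bigl((A-1)A-A(A-1)\bigr)=0$ and the coefficient of $x$ is $P\bigl(J(A-1)-(A-1)J-A\bigr)=P\bigl([J,A]-A\bigr)=0$ by \eqref{corchete A J}. Combining, $[\mathcal{D},\mathcal{D}^\dagger]=-x^2\phi^{(2)}(x)+(2-\phi'(x))x$. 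All the computations are routine; the only points needing care are keeping the right-action convention straight (so that, say, $\partial_x x$ and $x\partial_x$ are not confused and the signs in the brackets come out correctly) and the cancellation $[J,A]=A$, which is the genuine arithmetic content behind $[\mathcal{D},\mathcal{C}]=0$ and hence behind the formula for $[\mathcal{D},\mathcal{D}^\dagger]$.
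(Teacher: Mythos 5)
Your proof is correct. The paper states Lemma \ref{bracket D x xphi} without any proof, so there is nothing to compare against; your computation supplies the missing verification. Both halves check out: the uniform identity $[\mathcal{D},g]=-xg'(x)=-[\mathcal{D}^\dagger,g]$ for scalar multipliers $g$ handles every bracket except $[\mathcal{D},\mathcal{D}^\dagger]$, and reducing that one via \eqref{D-Ddaga eqn} to the single matrix identity $[\mathcal{D},Ax-J]=0$ (which indeed collapses to $[J,A]=A$) is exactly the cancellation that makes the stated formula come out.
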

	
	In the sequel, given $\phi$ an entire function over $\mathbb{C}$, we denote by
	\begin{equation}\label{associated lie algebra}
		\mathfrak{g}_{\phi}= \langle 1,\mathcal{D}, \mathcal{D}^{\dagger}, x, x\phi'(x), x^{2} \phi^{(2)}(x),\ldots \rangle
	\end{equation}
	with bracket as above. We are interested in the case that this Lie algebra is finite dimensional. The following proposition states that this happens if and only if $\phi$ is a polynomial. We will need the following notation, given $\phi$ a polynomial over $\mathbb{C}$ with $\ell$ non-zero coefficients
	\begin{equation}\label{def k}
		k=\begin{cases}
			\ell+2 & \text{if } \phi'(0)=\phi(0)=0,\\
			\ell+1 & \text{if } \phi'(0)=0, \phi(0)\neq 0,\\
			\ell+1 & \text{if } \phi(0)=0, \phi'(0)\neq 0,\\
			\ell & \text{if } \phi(0)\neq 0, \phi'(0)\neq 0,
		\end{cases}
	\end{equation}	
	
	\begin{prop}\label{Prop dimension}
		Let $\phi$ an analitic function over $\mathbb{C}$ and let $\mathfrak{g}:=\mathfrak{g}_{\phi}$ its associated Lie algebra as in \eqref{associated lie algebra}. 
		Then, we have that
		$\dim (\mathfrak{g})$ is finite if and only if $\phi$ is a polynomial. 
		In such case, if $k$ is as in \eqref{def k} then
		$$\dim (\mathfrak{g})= k+2.$$
	\end{prop}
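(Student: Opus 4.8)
The plan is to pin down $\mathfrak g_\phi$ explicitly as a vector space and then reduce the statement to an elementary linear-algebra computation with the functions $v_j(x):=x^j\phi^{(j)}(x)$. I would organize the argument in three steps.

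\emph{Step 1 (structure of $\mathfrak g_\phi$).} Regard $v_j=x^j\phi^{(j)}(x)$, $j\ge 1$, and $x$ as operators acting by right multiplication, and set $V:=\operatorname{span}_{\mathbb C}\{x,v_1,v_2,\dots\}$. By Lemma~\ref{bracket D x xphi} every bracket among the generators $1,\mathcal D,\mathcal D^\dagger,x,v_1,v_2,\dots$ lies in $V$ or is zero: $[\mathcal D,x]=-x$, $[\mathcal D^\dagger,x]=x$, $[\mathcal D,\mathcal D^\dagger]=-v_2-v_1+2x$, $[\mathcal D,v_j]=-(jv_j+v_{j+1})$, $[\mathcal D^\dagger,v_j]\in\operatorname{span}\{v_j,v_{j+1}\}$, and $[v_i,v_j]=0=[1,\,\cdot\,]$. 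Hence $\mathbb C1+\mathbb C\mathcal D+\mathbb C\mathcal D^\dagger+V$ is a Lie subalgebra containing all generators, so it equals $\mathfrak g_\phi$. This sum is moreover direct: by \eqref{D-Ddaga eqn}, $\mathcal D+\mathcal D^\dagger=\mathcal C-(1+\nu)+v_1-2x$ with $\mathcal C=Ax-J$, and inspecting the differential orders together with the matrix-valued order-zero coefficients (for $N\ge 2$ the coefficients of $\mathcal C=Ax-J$ are not scalar, while those of the elements of $\mathbb C1+V$ are) separates $1,\mathcal D,\mathcal D^\dagger$ from one another and from $V$. Therefore
$$\dim\mathfrak g_\phi=3+\dim V .$$

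\emph{Step 2 (the dichotomy).} Write $\phi(x)=\sum_{n\ge 0}c_n x^n$ and $S:=\{n\colon c_n\ne 0\}$. From $\phi^{(j)}(x)=\sum_{n\ge j}c_n\,n^{\underline j}\,x^{n-j}$, where $n^{\underline j}:=n(n-1)\cdots(n-j+1)$, we get $v_j=\sum_{n\in S}c_n\,n^{\underline j}\,x^n$. If $\phi$ is a polynomial of degree $d$, then $n^{\underline j}=0$ for all $n\in S$ once $j>d$, so $v_j=0$ for $j>d$ and $V=\operatorname{span}\{x,v_1,\dots,v_d\}$ is finite dimensional. If $\phi$ is not a polynomial, then $S$ is infinite, and a relation $\sum_{j=1}^m a_j v_j=0$ forces $\sum_{j=1}^m a_j n^{\underline j}=0$ for every $n\in S$; since $n\mapsto\sum_j a_j n^{\underline j}$ is a polynomial of degree $\le m$ with infinitely many zeros it vanishes identically, so all $a_j=0$. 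Thus $\{v_j\}_{j\ge 1}$ is infinite and linearly independent and $\dim V=\infty$. Hence $\dim\mathfrak g_\phi<\infty$ if and only if $\phi$ is a polynomial.

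\emph{Step 3 (the exact dimension).} Assume $\phi$ is a polynomial; put $d=\deg\phi$ and $\ell=|S|$, so $S\subseteq\{0,\dots,d\}$ and $|S\cup\{0\}|\le d+1$. Let $\Pi_d$ be the space of one-variable polynomials of degree $\le d$, let $\Pi_d^{(0)}:=\{p\in\Pi_d\colon p(0)=0\}$, and define the linear map $T\colon\Pi_d\to\mathbb C[x]$ by $T(p):=\sum_{n\in S}c_n\,p(n)\,x^n$. Since $\{n^{\underline 1},\dots,n^{\underline d}\}$ is a basis of $\Pi_d^{(0)}$, we have $\operatorname{span}\{v_1,\dots,v_d\}=T(\Pi_d^{(0)})$. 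Now $\ker T\cap\Pi_d^{(0)}=\{p\in\Pi_d\colon p\equiv 0\text{ on }S\cup\{0\}\}$ has dimension $(d+1)-|S\cup\{0\}|$ (Lagrange interpolation at the at most $d+1$ nodes of $S\cup\{0\}$), so
$$\dim T(\Pi_d^{(0)})=d-\bigl((d+1)-|S\cup\{0\}|\bigr)=|S\cup\{0\}|-1,$$
which is $\ell-1$ if $\phi(0)\ne 0$ and $\ell$ if $\phi(0)=0$. Finally, $T(\Pi_d^{(0)})\subseteq\operatorname{span}\{x^n\colon n\in S\}$, so $x$ can belong to $T(\Pi_d^{(0)})$ only if $1\in S$, i.e. $\phi'(0)\ne 0$; in that case one interpolates $p\in\Pi_d^{(0)}$ with $T(p)=x$, so $V=T(\Pi_d^{(0)})$, whereas if $\phi'(0)=0$ then $x\notin T(\Pi_d^{(0)})$ and $V=\mathbb C x\oplus T(\Pi_d^{(0)})$ gains one dimension. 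Running through the four cases according to the vanishing of $\phi(0)$ and of $\phi'(0)$ gives $\dim V=k-1$ with $k$ as in \eqref{def k}, and therefore $\dim\mathfrak g_\phi=3+(k-1)=k+2$.

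\emph{Expected main obstacle.} Steps~1--2 are short once Lemma~\ref{bracket D x xphi} is available, and they already settle the qualitative statement (hence Ismail's dichotomy). The delicate part is Step~3: the value of $\dim V$ depends on whether $x$ is redundant in $V$ and on the constant term of $\phi$, and the four alternatives in \eqref{def k} are exactly the possible outcomes of the interplay between $S$, $S\cup\{0\}$, and whether $1\in S$. Verifying the directness of the decomposition in Step~1 also requires a small computation with $A$ and $J$.
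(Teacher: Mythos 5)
Your proof is correct and follows the same overall strategy as the paper --- reduce everything to the linear span of the scalar multiplication operators $x,\,x\phi'(x),\,x^{2}\phi^{(2)}(x),\dots$ and then count --- but the two key technical steps are carried out differently, and in both places your version is the more robust one. For the infinite-dimensionality, the paper proves its ``Claim'' by solving a triangular linear system indexed by the nonzero Taylor coefficients of $\phi$; your observation that a dependence $\sum_{j=1}^{m} a_j x^j\phi^{(j)}(x)=0$ forces the degree-$\le m$ polynomial $t\mapsto\sum_j a_j\,t(t-1)\cdots(t-j+1)$ to vanish on the infinite set $S$ is shorter and needs no ordering of indices. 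For the exact dimension, the paper asserts that $\langle x\phi'(x),\dots,x^{n}\phi^{(n)}(x)\rangle$ has dimension $\ell$ and then adjoins $1$ and $x$; as written that intermediate claim is off by one when $\phi(0)\neq 0$, since all of $x\phi',\dots,x^{n}\phi^{(n)}$ vanish at $x=0$ and hence span only an $(\ell-1)$-dimensional space in that case (the final value $k$ survives because $1$ is adjoined separately). Your rank--nullity computation with the interpolation map $T$ handles the four alternatives of \eqref{def k} uniformly and supplies the bookkeeping the paper leaves implicit. Two small points: in Step 1 you should say explicitly that every element of $V$ vanishes at $x=0$, so $1\notin V$ and $\mathbb{C}1+V$ is direct; and your caveat ``$N\ge 2$'' is genuinely needed --- for $N=1$ one has $Ax-J=-1$, so $\mathcal{D}+\mathcal{D}^{\dagger}\in\mathbb{C}1+V$ and the dimension drops to $k+1$ --- a restriction the paper does not state but which its own assertion that $\mathcal{D},\mathcal{D}^{\dagger}$ are independent of $\langle 1,x,x\phi'(x),\dots\rangle$ also requires.
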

	\begin{proof}
		Clearly, if $\phi$ is a polynomial, then the dimension of $\mathfrak{g}$ is finite, since if $n$ is the degree of $\phi$ then $\phi^{(m)}(x)=0$ for all $m>n$. 
		
		Conversely, assume now that $\phi$ is not a polynomial. 
		Since $\phi$ is analytic, we can express $\phi$ as follow
		$$\phi(x)= \sum_{i=0}^{\infty} a_{i}x^{i}.$$
		This implies that 
		\begin{equation}\label{eq xjphi}
			x^{j}\phi^{(j)}(x)= \sum_{i=0}^{\infty} b_{i,j} x^{i}, \quad \text{with} \quad b_{i,j}=
			\begin{cases}
				\tbinom{i}{j}j! \,a_i & \text{ if $j\leq i$}, \\   
				0  & \text{if $i<j$}. 
			\end{cases}  
		\end{equation}
		In particular, if $a_i=0$ then $b_{i,j}=0$ for all $j \in \mathbb{N}$.
		
		Let $\{i_t\}_{t\in \mathbb{N}}$ be the sequence 
		of non-zero coefficients indices of $\phi$, that is  $i_{t}<i_{t+1}$ for all $t \in \mathbb{N}$, such that $a_i \neq 0$ if and only if $i= i_{t}$ for some $t \in \mathbb{N}$.

		\medskip
		
		\noindent \textit{Claim}: The vector space $\langle x^{i_1}\phi^{(i_1)}(x),\ldots, x^{i_{\ell}}\phi^{(i_{\ell})}(x)\rangle$ has dimension $\ell$.
		
		Let $c_{1},\ldots,c_{\ell} \in \mathbb{C}$ such that 
		$$c_1x^{i_1}\phi^{(i_1)}(x)+\cdots+ c_{\ell}x^{i_\ell}\phi^{(i_\ell)}(x)=0,$$
		this induces the following system of equations
		$$\sum_{t=1}^{h} c_{t} \tbinom{i_h}{i_t} i_{h}! a_{i_t}=0 \quad \text{for $h=1,\ldots,\ell$}.$$
		By taking into account that $a_{i_1}\neq 0$, 
		the equation for $h=1$ implies that $c_1=0$. 
		In the same way, since $c_1=0$, the equation for $h=2$ implies that
		$c_2 a_{i_2} i_2!=0$ and so $c_2=0$ since $a_{i_2}\neq 0$. Inductively,
		if $c_1=c_2=\ldots=c_{\ell-1}=0$, then the equation for $h={\ell}$ implies that
		$c_{\ell} a_{i_{\ell}} i_{\ell}!=0$ and so $c_{\ell}=0$ since $a_{i_{\ell}}\neq 0$, 
		hence we obtain that $c_{h}=0$ for all $h\in \{1,\ldots,\ell\}$.
		
		By the claim, the space $\mathfrak{g}$ has subspaces of all of the possible dimensions and so is non-finite dimensional, as asserted.
		
		Now, assume that $\phi$ is a polynomial of degree $n$, in the same notation as above,
		by \eqref{eq xjphi} we have that 
		$$\langle x\phi'(x),x^{2}\phi^{(2)}(x),\ldots, x^{n}\phi^{(n)}(x)\rangle\subseteq \langle x^{i_1},\ldots,x^{i_\ell}\rangle.$$
		The claim and the above statement imply that $\langle x\phi'(x),x^{2}\phi^{(2)}(x),\ldots, x^{n}\phi^{(n)}(x)\rangle$ has dimension $\ell$. 
		
		Finally, the last assertion follows from the fact that
		$\mathcal{D},\mathcal{D}^{\dagger}$ are linearly independent respect to
		$$\langle 1,x,x\phi'(x),x^{2}\phi^{(2)}(x),\ldots, x^{n}\phi^{(n)}(x)\rangle$$ 
		and this vector space has dimension $k$, with $k$ as in the statement.
	\end{proof}
	
	\begin{rmk}
		The above proposition solves the problem proposed by Ismail in \cite[Problem 24.5.2]{ISMAIL2019235} for the case $B_n=-1$ for all $n$ natural number. In the notation of \cite{ISMAIL2019235}, the differential operators $\mathcal{D}$ and $\mathcal{D}^{\dagger}$ correspond to $\mathcal{D}=x L_{1,n}$ and $\mathcal{D}^{\dagger}=x L_{2,n} +(1+\nu)$. Then, the algebra generated by $\{\mathcal{D}, \mathcal{D}^{\dagger},1\}$ is isomorphic to the algebra generated by $\{xL_{1,n}, xL_{2,n},1\}$.
	\end{rmk}

	\begin{rmk}\label{rem phi espacio}
		By the proof of the above theorem,
		if $\phi(x)=a_0+a_1 x +\ldots+ a_{n}x^n$ is a polynomial of degree $n$ with $\ell$ non-zero coefficients.
		If $\{i_1,\ldots,i_{\ell} \}\subseteq \{0,\ldots,n\}$ is the set of indices such that $a_{i_j}\neq 0$.
		then we have that
		\begin{equation}
			\langle x\phi'(x),x^{2}\phi^{(2)}(x),\ldots, x^{n}\phi^{(n)}(x)\rangle=\langle x^{i_1},\ldots,x^{i_\ell}\rangle.
		\end{equation}
	\end{rmk}

	\begin{example}
		Let $\phi_{1}(x)=x^3$ and $\phi_{2}(x)=x^3+x^2$, by the above theorem the associated Lie algebras
		$\mathfrak{g}_{\phi_1}$ and $\mathfrak{g}_{\phi_2}$ have the dimensions $5$ and $6$, respectively. Then, the algebras
		$\mathfrak{g}_{\phi_1}$ and $\mathfrak{g}_{\phi_2}$ are non-isomorphic. 
	\end{example}

	\begin{lem}\label{lema elemento central}
		The element $z= \mathcal{D}+\mathcal{D^{\dagger}} +2x -x\phi'(x)$ is a symmetric differential operator which
		belongs to the center of the Lie algebra $\mathfrak{g}$.
	\end{lem}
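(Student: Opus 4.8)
The plan is to verify the two assertions separately: that $z = \mathcal{D} + \mathcal{D}^\dagger + 2x - x\phi'(x)$ is symmetric, and that it is central in $\mathfrak{g}$. For symmetry, I would first simplify $z$ using the explicit formula \eqref{D-Ddaga eqn}, which gives $\mathcal{D} + \mathcal{D}^\dagger = (Ax - J) - (1+\nu) + x\phi'(x) - 2x = \mathcal{C} - (1+\nu) + x\phi'(x) - 2x$. Substituting this into the definition of $z$ yields $z = \mathcal{C} - (1+\nu)$, where $\mathcal{C} = Ax - J$. Since $\mathcal{C}$ is symmetric with respect to $W^{(\nu)}_\phi$ by Lemma \ref{AJ lemma}, and the constant operator $(1+\nu)$ is trivially symmetric, it follows that $z$ is a symmetric differential operator.

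For centrality, it suffices to check that $z$ commutes with the generators $\mathcal{D}$, $\mathcal{D}^\dagger$, $x$, and $x^j \phi^{(j)}(x)$ for $j \geq 1$ listed in \eqref{associated lie algebra}; by bilinearity and the Jacobi identity this will give $[z, \mathfrak{g}] = 0$. The cleanest route is to use the identity $z = \mathcal{C} - (1+\nu)$ obtained above, so that $[z, \cdot] = [\mathcal{C}, \cdot] = [Ax - J, \cdot]$, reducing everything to brackets involving $Ax$ and $J$ acting by right multiplication. Alternatively — and this may actually be simpler to write down — I would use the brackets already computed in Lemma \ref{bracket D x xphi}. For $x$: from $[\mathcal{D}, x] = -x$ and $[\mathcal{D}^\dagger, x] = x$ we get $[\mathcal{D} + \mathcal{D}^\dagger, x] = 0$, and since $x$ and $x\phi'(x)$ commute with $x$ by right multiplication, $[z,x] = 0$. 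For $x^j\phi^{(j)}(x)$, $j \geq 1$: these mutually commute (all are polynomials in $x$ acting by right multiplication), so one only needs $[\mathcal{D} + \mathcal{D}^\dagger, x^j\phi^{(j)}(x)]$; from the last line of Lemma \ref{bracket D x xphi}, $[\mathcal{D}, \phi^{(j)}(x)x^j] = -[\mathcal{D}^\dagger, \phi^{(j)}(x)x^j]$ (note the exponent on $x$ in the $\mathcal{D}^\dagger$ term should match), so their sum is zero, giving $[z, x^j\phi^{(j)}(x)] = 0$.

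The remaining checks are $[z,\mathcal{D}] = 0$ and $[z,\mathcal{D}^\dagger] = 0$, and by symmetry of the expression for $z$ in $\mathcal{D}$ and $\mathcal{D}^\dagger$ it is enough to do one of them, say $[z, \mathcal{D}]$. Here I would expand $[z,\mathcal{D}] = [\mathcal{D} + \mathcal{D}^\dagger, \mathcal{D}] + [2x - x\phi'(x), \mathcal{D}] = [\mathcal{D}^\dagger, \mathcal{D}] + [2x, \mathcal{D}] - [x\phi'(x), \mathcal{D}]$. Using Lemma \ref{bracket D x xphi}: $[\mathcal{D}^\dagger, \mathcal{D}] = -[\mathcal{D}, \mathcal{D}^\dagger] = x^2\phi^{(2)}(x) - (2 - \phi'(x))x$; next $[2x, \mathcal{D}] = -2[\mathcal{D}, x] = 2x$; and $[x\phi'(x), \mathcal{D}] = -[\mathcal{D}, x\phi'(x)] = x\phi'(x) + x^2\phi^{(2)}(x)$ (taking $j=1$ in the last formula of the lemma). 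Adding these three contributions: $\bigl(x^2\phi^{(2)}(x) - 2x + x\phi'(x)\bigr) + 2x - \bigl(x\phi'(x) + x^2\phi^{(2)}(x)\bigr) = 0$.

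I do not expect any serious obstacle here: the statement is essentially a bookkeeping consequence of the bracket table in Lemma \ref{bracket D x xphi} together with the relation \eqref{D-Ddaga eqn}. The only point requiring a little care is keeping the sign conventions and the exponents of $x$ consistent between the $[\mathcal{D}, \cdot]$ and $[\mathcal{D}^\dagger, \cdot]$ formulas (the stated lemma has $\phi^{(j)}(x)x$ rather than $\phi^{(j)}(x)x^j$ in the $\mathcal{D}^\dagger$ bracket, which should read $\phi^{(j)}(x)x^j$ for the cancellation to work out), and observing that symmetry of $z$ follows immediately once one recognizes $z = \mathcal{C} - (1+\nu)$ and invokes Lemma \ref{AJ lemma}.
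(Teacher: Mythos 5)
Your proposal is correct and follows essentially the same route as the paper, which simply asserts that the lemma ``follows immediately from the definition of the bracket of $\mathfrak{g}_{\phi}$''; you have filled in the bracket computations from Lemma \ref{bracket D x xphi} and the identification $z=\mathcal{C}-(1+\nu)$ via \eqref{D-Ddaga eqn}, including the correct reading of the typo $\phi^{(j)}(x)x$ for $\phi^{(j)}(x)x^{j}$. The only loose phrase is the appeal to ``symmetry'' for $[z,\mathcal{D}^{\dagger}]=0$; it is cleaner to note that $[z,\mathcal{D}]+[z,\mathcal{D}^{\dagger}]=[z,z-2x+x\phi'(x)]=0$ once $[z,x]=[z,x\phi'(x)]=0$ are established, so $[z,\mathcal{D}^{\dagger}]=-[z,\mathcal{D}]=0$.
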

	\begin{proof}
		It is follows immediately from the definition of the bracket of $\mathfrak{g}_{\phi}$.
	\end{proof}
	
	\begin{rmk}
		The central element that we found in the above lemma, it is related with the symmetric operator $\mathcal{C}$ that was considered in Lemma \ref{AJ lemma},
		we will see this in the following section.
	\end{rmk}
	
	In the sequel, given a polynomial $\phi(x)=a_{0}+a_1 x+ \cdots +a_n x^n$  of degree $n\geq 2$ 
	with $\ell$ non-zero coefficients and $k$ as in \eqref{def k}, let us consider the following notations. 
	\begin{equation}\label{def Iphi}
		I_{\phi}=\{i \in \{2,\ldots,n\}: a_i\neq 0\}=\{ j_1,\ldots,j_{k-2}\}=
		\begin{cases}
			\{i_3,\ldots,i_{\ell}\} & \text{if $a_0\neq 0$, $a_1\neq 0$,} \\
			\{i_2,\ldots,i_{\ell}\} & \text{if $a_0=0$ and $a_1\neq 0$,} \\
			\{i_2,\ldots,i_{\ell}\} & \text{if $a_0\neq 0$ and $a_1= 0$,} \\
			\{i_1,\ldots,i_{\ell}\} & \text{if $a_0=0$ and $a_1=0$,} \\
		\end{cases}
	\end{equation}
	with $j_t< j_{t+1}$ and $i_t<i_{t+1}$.
	
	\begin{thm}\label{Teo estructura}
		Let $\phi(x)=a_{0}+a_1 x+ \cdots +a_n x^n$ be a polynomial of degree $n\geq 2$ 
		with $\ell$ non-zero coefficients and $k$ as in \eqref{def k}. 
		If $\mathfrak{g}:=\mathfrak{g}_{\phi}$ is the associated Lie algebra of $\phi$ as in \eqref{associated lie algebra},
		then we have that
		$$\mathfrak{g}\cong \mathbb{C}^2 \oplus \mathfrak{h}$$
		where $\mathfrak{h}$  is a solvable Lie algebra of dimension $k$, with an abelian nilradical of dimension $k-1$. More precisely,
		if $I_{\phi}$ is as in \eqref{def Iphi}	then
		$$\mathfrak{h} \cong \langle E \rangle \ltimes \langle E_1, \ldots E_{k-1}\rangle $$
		where $\langle E_1, \ldots E_{k-1}\rangle$ is abelian and the rest of the brackets satisfy
		\begin{equation}\label{bracket E's}
			[E,E_1]=E_1 \quad \text{and} \quad [E,E_t]= j_{t-1} E_t \quad \text{for $t=2,\ldots,k-1$}.
		\end{equation}
	\end{thm}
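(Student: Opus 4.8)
The plan is to peel off a two–dimensional centre and to recognise what remains as the explicit solvable algebra in the statement. The first step is to fix the underlying vector space of $\mathfrak{g}$: by the proof of Proposition~\ref{Prop dimension} together with Remark~\ref{rem phi espacio}, the subspace $\langle 1,x,x\phi'(x),\ldots,x^{n}\phi^{(n)}(x)\rangle$ equals $\langle 1,x\rangle+\langle x^{j}:j\in I_{\phi}\rangle$ and has dimension $k$, while $\mathcal{D},\mathcal{D}^{\dagger}$ are linearly independent modulo it. Hence $\{\,1,\mathcal{D},\mathcal{D}^{\dagger},x\,\}\cup\{\,x^{j}:j\in I_{\phi}\,\}$ is a basis of $\mathfrak{g}$, where we write $I_{\phi}=\{j_{1}<\cdots<j_{k-2}\}$ as in \eqref{def Iphi}.

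Next I would record the brackets among monomials. Since $\mathrm{ad}_{\mathcal{D}}$ is a derivation of the commutative algebra of multiplication operators and $[\mathcal{D},x]=-x$ by Lemma~\ref{bracket D x xphi}, an easy induction gives $[\mathcal{D},x^{m}]=-m\,x^{m}$, and likewise $[\mathcal{D}^{\dagger},x^{m}]=m\,x^{m}$, for every $m\ge1$. Setting
$$E:=-\mathcal{D},\qquad E_{1}:=x,\qquad E_{t}:=x^{j_{t-1}}\ \ (t=2,\ldots,k-1),$$
the subspace $\mathfrak{h}:=\langle E,E_{1},\ldots,E_{k-1}\rangle=\langle\mathcal{D},x,x^{j_{1}},\ldots,x^{j_{k-2}}\rangle$ is closed under the bracket, its only nonzero brackets being $[E,E_{1}]=E_{1}$ and $[E,E_{t}]=j_{t-1}E_{t}$. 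Thus $\mathfrak{h}$ is a $k$-dimensional subalgebra isomorphic to $\langle E\rangle\ltimes\langle E_{1},\ldots,E_{k-1}\rangle$ with the asserted brackets; moreover $[\mathfrak{h},\mathfrak{h}]=\langle E_{1},\ldots,E_{k-1}\rangle$ is abelian, so $\mathfrak{h}$ is solvable, and it is not nilpotent (the lower central series stabilises at $\langle E_{1},\ldots,E_{k-1}\rangle$, since $[E,E_{1}]=E_{1}$), so this codimension–one abelian ideal is exactly $\mathrm{Nil}(\mathfrak{h})$, of dimension $k-1$.

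It then remains to check that $\mathfrak{g}=\langle 1,z\rangle\oplus\mathfrak{h}$, with $z=\mathcal{D}+\mathcal{D}^{\dagger}+2x-x\phi'(x)$ the central element of Lemma~\ref{lema elemento central}. Because $\mathcal{D}$, $x$ and $x\phi'(x)$ all lie in $\mathfrak{h}$ (the last since $x\phi'(x)\in\langle x\rangle+\langle x^{j}:j\in I_{\phi}\rangle$), one has $z\equiv\mathcal{D}^{\dagger}\pmod{\mathfrak{h}}$, so $1$ and $z$ are linearly independent modulo $\mathfrak{h}$; comparing dimensions, $\dim\mathfrak{g}=k+2=2+\dim\mathfrak{h}$ by Proposition~\ref{Prop dimension}, so the sum is direct. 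Since $1$ is central trivially and $z$ is central by Lemma~\ref{lema elemento central}, $\langle 1,z\rangle\cong\mathbb{C}^{2}$ is a central ideal; hence $\mathfrak{h}$ is an ideal as well, the bracket of $\mathfrak{g}$ respects the decomposition $\mathfrak{g}=\langle 1,z\rangle\oplus\mathfrak{h}$, and therefore $\mathfrak{g}\cong\mathbb{C}^{2}\oplus\mathfrak{h}$ as claimed.

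The step I expect to require the most care is not any bracket computation but the basis identification in the first step: verifying, across the four cases of \eqref{def k}, that the operators $x^{j}\phi^{(j)}(x)$ for $j\ge1$ span precisely $\langle x^{j}:j\in I_{\phi}\rangle$, together with $x$ exactly when $a_{1}\neq0$. This is the triangularity/Vandermonde argument already carried out in the proof of Proposition~\ref{Prop dimension} and summarised in Remark~\ref{rem phi espacio}, so I would simply cite it; the remainder of the argument is then bookkeeping with the brackets of Lemma~\ref{bracket D x xphi}.
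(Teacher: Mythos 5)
Your proposal is correct and follows essentially the same route as the paper: split off the two-dimensional centre $\langle 1,z\rangle$ using Lemma \ref{lema elemento central}, identify $\mathfrak{h}$ with $\langle \mathcal{D},x,x^{j_1},\ldots,x^{j_{k-2}}\rangle$ via Remark \ref{rem phi espacio}, and read off the brackets $[\mathcal{D},x^{m}]=-mx^{m}$. The only difference is that you spell out two points the paper leaves implicit (that $\mathfrak{h}$ is not nilpotent, so the codimension-one abelian ideal is exactly $\mathrm{Nil}(\mathfrak{h})$, and the dimension count making the sum direct), which is a welcome but minor refinement.
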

	\begin{proof}
		By Lemma \ref{lema elemento central}, the element 
		$z= \mathcal{D}+\mathcal{D^{\dagger}} +2x -x\phi'(x)$ belongs to the center of $\mathfrak{g}$, 
		and so we obtain an element in the center which does not belong to $\langle 1\rangle$, 
		thus if 
		\begin{equation*}\label{def h}
			\mathfrak{h}=\langle \mathcal{D}, x,x\phi'(x),x^{2}\phi^{(2)}(x),\ldots, x^{n}\phi^{(n)}(x) \rangle   
		\end{equation*}
		then we obtain that
		$$\mathfrak{g}\cong \mathbb{C}^2 \oplus \mathfrak{h},$$
		with $\mathfrak{h}$  a Lie algebra of dimension $k$.
		
		Thus, it is enough to show that $\mathfrak{h}$ is solvable with nilradical of dimension $k-1$. 
		Let us consider
		$$\mathfrak{k}=\langle x,x\phi'(x),x^{2}\phi^{(2)}(x),\ldots, x^{n}\phi^{(n)}(x)\rangle,$$
		by definition of the bracket and by taking into account that 
		\begin{equation}\label{corchetes x phi}
			[x,x^{l} \phi^{(l)}(x)]=[x^i \phi^{(i)}(x),x^j \phi^{(j)}(x)]=0\qquad \text{for all $i\neq j$,}    
		\end{equation}
		we obtain that $[\mathfrak{h},\mathfrak{h}]\subseteq \mathfrak{k}$.
		Hence, by \eqref{corchetes x phi} we obtain that $\Big[[\mathfrak{h},\mathfrak{h}],[\mathfrak{h},\mathfrak{h}]\Big]=0,$
		and so $\mathfrak{h}$ is solvable.
		Finally, notice that
		$\mathfrak{k}$ is an abelian ideal of $\mathfrak{h}$ of dimension 
		$$\dim (\mathfrak{k})=\dim (\mathfrak{h})-1=k-1, $$
		this implies that $\mathfrak{k}$ is the nilradical of $\mathfrak{h}$, as desired.
		
		In the same manner as in Remark \ref{rem phi espacio} we have that
		$$\mathfrak{h}=\langle \mathcal{D}, x, x^{j_1},\ldots, x^{j_{k-2}} \rangle,$$ 
		in this case $\langle x, x^{j_1},\ldots, x^{j_{k-2}}\rangle$ is an abelian subalgebra of dimension $k-1$. 
		It is enough to see the brackets $[\mathcal{D},x^{j_t}]$ and $[\mathcal{D},x]$, in this case we obtain that
		$$[\mathcal{D},x]=-x \qquad \text{and} \qquad [\mathcal{D},x^{j_t}]= -j_{t} x^{j_t},$$
		so we obtain that $\langle x, x^{j_1},\ldots, x^{j_{k-2}}\rangle$ is an abelian ideal. Finally, 
		if we consider the following correspondence
		$$\mathcal{D}\longmapsto -E, \quad x\longmapsto E_1, \quad x^{j_i}\longmapsto E_{i+1} \quad \text{for $i=1,\ldots,k-2$},$$
		is an Lie Algebra isomorphism between $\mathfrak{h}$ and $\langle E \rangle \ltimes \langle E_1, \ldots E_{k-1}\rangle$ with brackets given as in \eqref{bracket E's}, as asserted.
	\end{proof}

	In the sequel we are going to study the structure of the solvable Lie algebra $\mathfrak{h}_{\phi}$.
	In general, a Lie algebra $\mathfrak{h'}$ with an abelian ideal of codimension $1$ is called almost abelian.
	This kind of algebra was studied by V.V.\@ Gorbatsevich in \cite{gorbatsevich1998level}. 
	The author asserted that in general this kind of algebra it decomposes as
	$$\mathbb{C} \ltimes_{\psi} \mathbb{C}^{k-1},$$
	this semidirect product gives a linear transformation $\psi: \mathbb{C} \rightarrow gl_{k-1}(\mathbb{C})$,
	moreover he asserted that the structure of this kind of algebras it determines by the matrix $\Psi=\psi(1)$. More precisely,
	\begin{equation}\label{conformally similar}
		\mathbb{C} \ltimes_{\psi} \mathbb{C}^{k-1}\cong \mathbb{C} \ltimes_{\psi'} \mathbb{C}^{k-1} \Longleftrightarrow \quad \text{$\Psi$ and $\Psi'$ are conformally similar},
	\end{equation}
	recall that $\Psi$ and $\Psi'$ are conformally similar if and only if there exist a matrix $P\in Gl_{k-1}(\mathbb{C})$ and a non-zero complex number $\lambda \in \mathbb{C}\smallsetminus \{0\}$ such that $\Psi= \lambda P \Psi' P^{-1}$.
	
	\begin{lem}\label{Lema conformally}
		Let $k$ be an integer greater than $1$ and let $1<j_1<\ldots<j_{k-2}$ and $1<j'_1<\ldots< j'_{k-2}$ be two sequences of positive integers.
		Let us consider 
		$$\Psi = \mathrm{diag}(1, j_1,\ldots,j_{k-2}) \quad  \text{and} \quad \Psi' = \mathrm{diag}(1, j'_1,\ldots,j'_{k-2}).$$ 
		Then, $\Psi$ and $\Psi'$ are conformally similar if and only if $\Psi=\Psi'$.
	\end{lem}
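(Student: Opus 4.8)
The implication ``$\Psi=\Psi'\Rightarrow$ conformally similar'' is immediate: take $P$ the identity matrix and $\lambda=1$. So the whole content is the converse, and the plan is to reduce it to the fact that similar matrices have the same spectrum (with multiplicities), together with an elementary observation about the scaling factor $\lambda$.

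Assume $\Psi=\lambda\, P\Psi' P^{-1}$ with $P\in GL_{k-1}(\mathbb{C})$ and $\lambda\in\mathbb{C}\smallsetminus\{0\}$. Then $\Psi$ is similar to $\lambda\Psi'$, hence they have the same characteristic polynomial and therefore the same eigenvalues counted with multiplicity. Since $\Psi=\mathrm{diag}(1,j_1,\dots,j_{k-2})$ with $1<j_1<\dots<j_{k-2}$, the spectrum of $\Psi$ is the multiset $\{1,j_1,\dots,j_{k-2}\}$, all eigenvalues simple; likewise the spectrum of $\lambda\Psi'$ is $\{\lambda,\lambda j'_1,\dots,\lambda j'_{k-2}\}$. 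Thus these two multisets coincide.

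Now I would argue that $\lambda=1$. The number $\lambda=\lambda\cdot 1$ lies in the multiset $\{\lambda,\lambda j'_1,\dots,\lambda j'_{k-2}\}=\{1,j_1,\dots,j_{k-2}\}$, which consists of positive integers; hence $\lambda$ is itself a positive integer, in particular $\lambda>0$. Because multiplication by a positive real preserves order and $1<j'_1<\dots<j'_{k-2}$, the minimum of $\{\lambda,\lambda j'_1,\dots,\lambda j'_{k-2}\}$ equals $\lambda$, while the minimum of $\{1,j_1,\dots,j_{k-2}\}$ equals $1$; therefore $\lambda=1$. (One could equally compare the maxima.)

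Finally, with $\lambda=1$ the two strictly increasing finite sequences $(1,j_1,\dots,j_{k-2})$ and $(1,j'_1,\dots,j'_{k-2})$ enumerate the same multiset, so they agree term by term, i.e. $j_t=j'_t$ for all $t$, and hence $\Psi=\Psi'$. There is no real obstacle here; the only point requiring a small remark is that the conformal factor $\lambda$ must be a positive integer, which is exactly what forces $\lambda=1$ once one compares smallest (or largest) eigenvalues.
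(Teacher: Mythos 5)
Your proof is correct and follows essentially the same route as the paper: compare the spectra of $\Psi$ and $\lambda\Psi'$ as multisets, force $\lambda=1$ by looking at the smallest eigenvalue (the paper phrases this as "$1$ belongs to both spectra and the remaining eigenvalues exceed $1$"), and conclude $\Psi=\Psi'$. Your intermediate observation that $\lambda$ must be a positive integer makes the order-comparison step slightly more explicit than the paper's, but the argument is the same.
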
	
	\begin{proof}
		Clearly, if $\Psi=\Psi'$ then they are conformally similar trivially.
		
		Now, assume that $\Psi$ and $\Psi'$ are conformally similar, so there exist $\lambda\in \mathbb{C}^*$ and $P\in GL_{k-1}(\mathbb{C})$ such that
		$$\Psi' = \lambda P \Psi P^{-1},$$ 
		by similarity we obtain the following spectral relationship
		$$\mathrm{Spec}(\Psi')= \lambda\cdot \mathrm{Spec}(\Psi),$$
		i.e.\@ all of the eigenvalues of $\Psi'$ can be obtained from the eigenvalues of $\Psi$ by multiplication by $\lambda$.
		Since $\Psi$ and $\Psi'$ are both diagonal, we have that 
		$$\mathrm{Spec}(\Psi)=\{1,j_1,\ldots,j_{k-2}\}\quad \text{ and} \quad \mathrm{Spec}(\Psi')=\{1, j'_1,\ldots,j'_{k-2}\},$$ 
		since $1$ belong to both spectra and the rest of the eigenvalues of $\Psi$ and $\Psi'$ are greater than $1$, we obtain that $\lambda=1$ necessarily. 
		Hence, we obtain that
		$$\mathrm{Spec}(\Psi')= \mathrm{Spec}(\Psi).$$
		Therefore, $\Psi=\Psi'$ as asserted.
	\end{proof}
	
	We are in position to give the following theorem, which says when the Lie algebra associated to two different polynomials (as in \eqref{associated lie algebra}) are isomorphic.
	\begin{thm}\label{Teo lie algebra isom}
		Let $\phi_1(x),\phi_2(x)$ be polynomials over $\mathbb{C}$ of degree greater or equal than $2$.
		Let $\mathfrak{h}_{\phi_1}$ and $\mathfrak{h}_{\phi_2}$ be its associated solvable Lie algebras given as in Theorem \ref{Teo estructura}.
		Then, 
		$$\mathfrak{h}_{\phi_1}\cong \mathfrak{h}_{\phi_2} \Longleftrightarrow I_{\phi_1} = I_{\phi_2},$$
		where $I_{\phi_1}, I_{\phi_2}$ are as in \eqref{def Iphi}.
		Moreover, we have that
		$$\mathfrak{g}_{\phi_1}\cong \mathfrak{g}_{\phi_2} \Longleftrightarrow I_{\phi_1} = I_{\phi_2},$$
		where $\mathfrak{g}_{\phi_1}$ and $\mathfrak{g}_{\phi_2}$ are the associated Lie algebras of $\phi_1$ and $\phi_2$, respectively.
	\end{thm}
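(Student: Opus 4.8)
The plan is to transport both statements to the explicit presentation of $\mathfrak{h}_\phi$ provided by Theorem \ref{Teo estructura}, and then invoke Gorbatsevich's classification of almost abelian Lie algebras \eqref{conformally similar} together with Lemma \ref{Lema conformally}. Throughout we write, for a polynomial $\phi$ of degree $\ge 2$, $I_\phi=\{j_1<\cdots<j_{k-2}\}$ as in \eqref{def Iphi}; note that since $\deg\phi\ge 2$ its leading exponent lies in $I_\phi$, so $I_\phi\neq\emptyset$ and $k\ge 3$, i.e. $k-1\ge 2$, which is precisely the range covered by Lemma \ref{Lema conformally}.

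The easy implications are immediate. If $I_{\phi_1}=I_{\phi_2}$, then by Theorem \ref{Teo estructura} both $\mathfrak{h}_{\phi_1}$ and $\mathfrak{h}_{\phi_2}$ are presented as $\langle E\rangle\ltimes\langle E_1,\dots,E_{k-1}\rangle$ with the \emph{same} brackets $[E,E_1]=E_1$, $[E,E_t]=j_{t-1}E_t$ ($t=2,\dots,k-1$), so $\mathfrak{h}_{\phi_1}\cong\mathfrak{h}_{\phi_2}$; and since $\mathfrak{g}_\phi\cong\mathbb{C}^2\oplus\mathfrak{h}_\phi$, also $\mathfrak{g}_{\phi_1}\cong\mathfrak{g}_{\phi_2}$.

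For the converse for $\mathfrak{h}$: assume $\mathfrak{h}_{\phi_1}\cong\mathfrak{h}_{\phi_2}$. Comparing dimensions and using $\dim\mathfrak{h}_\phi=k$ (Theorem \ref{Teo estructura}) forces $k_1=k_2=:k$, hence $|I_{\phi_1}|=|I_{\phi_2}|=k-2$. By Theorem \ref{Teo estructura} each $\mathfrak{h}_{\phi_i}$ is almost abelian with abelian nilradical of codimension one, so it has the normal form $\mathbb{C}\ltimes_{\psi_i}\mathbb{C}^{k-1}$ in which, using the basis $(E_1,\dots,E_{k-1})$ of the nilradical coming from the theorem, $\Psi_i:=\psi_i(1)=\mathrm{diag}(1,j^{(i)}_1,\dots,j^{(i)}_{k-2})$ with $1<j^{(i)}_1<\cdots<j^{(i)}_{k-2}$ the increasing enumeration of $I_{\phi_i}$. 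By \eqref{conformally similar} the isomorphism $\mathfrak{h}_{\phi_1}\cong\mathfrak{h}_{\phi_2}$ forces $\Psi_1$ and $\Psi_2$ to be conformally similar, and then Lemma \ref{Lema conformally} gives $\Psi_1=\Psi_2$, i.e. the two sequences coincide, i.e. $I_{\phi_1}=I_{\phi_2}$.

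For the converse for $\mathfrak{g}$, I would reduce to the previous case through the center. By Lemma \ref{lema elemento central}, $1$ and $z=\mathcal{D}+\mathcal{D}^\dagger+2x-x\phi'(x)$ are central and span a $2$-dimensional subspace transverse to $\langle 1\rangle$, so $\mathbb{C}^2\subseteq Z(\mathfrak{g}_\phi)$; conversely a short computation with the brackets \eqref{bracket E's} shows $Z(\mathfrak{h}_\phi)=0$ (an element $aE+\sum_t b_tE_t$ commuting with $E_1$ must have $a=0$, and then commuting with $E$ forces all $b_t=0$ since $j_{t-1}\neq 0$), whence $Z(\mathfrak{g}_\phi)=\mathbb{C}^2$ and $\mathfrak{g}_\phi/Z(\mathfrak{g}_\phi)\cong\mathfrak{h}_\phi$. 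Thus $\mathfrak{g}_{\phi_1}\cong\mathfrak{g}_{\phi_2}$ implies $\mathfrak{h}_{\phi_1}\cong\mathfrak{h}_{\phi_2}$, and the previous paragraph finishes the argument. The step I expect to need the most care is the bookkeeping that turns the presentation of Theorem \ref{Teo estructura} into the normal form $\mathbb{C}\ltimes_\psi\mathbb{C}^{k-1}$ with a diagonal $\Psi$ of exactly the shape demanded by Lemma \ref{Lema conformally} — in particular checking that the eigenvalue $1$ always occurs and the remaining eigenvalues are $>1$, and that the distinguished abelian ideal used by Gorbatsevich is indeed the nilradical; once this is in place, Gorbatsevich's criterion and Lemma \ref{Lema conformally} do the real work.
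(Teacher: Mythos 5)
Your proposal is correct and follows essentially the same route as the paper: reduce to $\mathfrak{h}_\phi$ via Theorem \ref{Teo estructura}, apply Gorbatsevich's criterion \eqref{conformally similar}, and conclude with Lemma \ref{Lema conformally}. In fact you are slightly more careful than the paper on the reduction $\mathfrak{g}_{\phi_1}\cong\mathfrak{g}_{\phi_2}\Rightarrow\mathfrak{h}_{\phi_1}\cong\mathfrak{h}_{\phi_2}$ (identifying $\mathbb{C}^2$ as the full center and passing to the quotient), a step the paper asserts without justification.
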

	\begin{proof}
		From Theorem \ref{Teo estructura}, we have that $\mathfrak{g}_{\phi_1}\cong \mathfrak{g}_{\phi_2}$ if and only if $\mathfrak{h}_{\phi_1}\cong \mathfrak{h}_{\phi_2}$.
		So, it is enough to see that
		$$\mathfrak{h}_{\phi_1}\cong \mathfrak{h}_{\phi_2} \Longleftrightarrow I_{\phi_1} = I_{\phi_2}.$$
		Now by \eqref{conformally similar}, it enough to see that the associated matrices $\Phi_1$ and $\Phi_2$ are conformally similar. 
		By Theorem \ref{Teo estructura} and \eqref{conformally similar} we have that $\Phi_1$ and $\Phi_2$ are conformally similar to 
		$$\Psi = \mathrm{diag}(1, j_1,\ldots,j_{k-2}) \quad  \text{and} \quad \Psi' = \mathrm{diag}(1, j'_1,\ldots,j'_{k-2}) \quad \text{respectively,}$$
		where  $\{j_1,\ldots,j_{k-2}\} =I_{\phi_1}$ and $\{j'_1,\ldots,j'_{k-2}\}=I_{\phi_2}$.
		Finally, by Lemma \ref{Lema conformally}, we obtain that $\Psi$ and $\Psi'$ are conformally similar if and only if 
		$\Psi=\Psi'$ which is equivalent to say that $I_{\phi_1}=I_{\phi_2}$. 
		Therefore $\mathfrak{h}_{\phi_1}\cong \mathfrak{h}_{\phi_2}$ if and only if $I_{\phi_1}= I_{\phi_2}$ as asserted.	
	\end{proof}
	
	As a direct consequence, we obtain the following.
	\begin{cor}
		Let $\phi_1(x),\phi_2(x)$ be polynomials over $\mathbb{C}$ with degree greater than $2$. 
		Then, we have the following cases:
		\begin{enumerate}
			\item If $\deg \phi_1=\deg \phi_2=2$, then $\mathfrak{g}_{\phi_1}\cong \mathfrak{g}_{\phi_2}$.
			\item If $\deg \phi_1\neq \deg \phi_2$, then $\mathfrak{g}_{\phi_1}\not\cong \mathfrak{g}_{\phi_2}$.
		\end{enumerate}	
	\end{cor}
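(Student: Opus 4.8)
The plan is to derive both parts directly from Theorem~\ref{Teo lie algebra isom}, which reduces the isomorphism question to the equality of the index sets $I_{\phi_1}, I_{\phi_2}$ defined in~\eqref{def Iphi}. Thus the whole argument becomes an elementary comparison of two finite subsets of $\{2,3,\ldots\}$, and the only thing one has to keep in mind is that, by definition, $I_\phi$ records exactly the exponents $i\ge 2$ with $a_i\neq 0$: the four-way branching in~\eqref{def Iphi} concerns only the low-order coefficients $a_0,a_1$ and therefore plays no role here. (I would also read the hypothesis as ``degree at least $2$'', since part~(1) explicitly deals with the degree-$2$ case.)

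For part~(1), suppose $\deg\phi_1=\deg\phi_2=2$. Writing $\phi_r(x)=a^{(r)}_0+a^{(r)}_1x+a^{(r)}_2x^2$, the leading coefficient $a^{(r)}_2$ is non-zero for $r=1,2$, and there are no exponents above $2$ to consider; hence $I_{\phi_1}=\{2\}=I_{\phi_2}$. Theorem~\ref{Teo lie algebra isom} then gives $\mathfrak{g}_{\phi_1}\cong\mathfrak{g}_{\phi_2}$.

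For part~(2), I would first record the observation that $\max I_\phi=\deg\phi$ whenever $\deg\phi\ge 2$: if $n=\deg\phi$ then $n\in\{2,\ldots,n\}$ and $a_n\neq 0$, so $n\in I_\phi$, while no larger index occurs. Consequently, if $\deg\phi_1\neq\deg\phi_2$ then $\max I_{\phi_1}\neq\max I_{\phi_2}$, so $I_{\phi_1}\neq I_{\phi_2}$, and the contrapositive of the equivalence in Theorem~\ref{Teo lie algebra isom} yields $\mathfrak{g}_{\phi_1}\not\cong\mathfrak{g}_{\phi_2}$.

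Since everything is an immediate consequence of Theorem~\ref{Teo lie algebra isom}, there is no substantial obstacle here; the only point requiring any attention is to isolate the structural invariant $\max I_\phi=\deg\phi$ that separates the algebras of distinct degree, and to verify the trivial degree-$2$ case by hand.
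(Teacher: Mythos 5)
Your proof is correct and follows exactly the route the paper intends: the corollary is presented there as a direct consequence of Theorem~\ref{Teo lie algebra isom} with no further argument, and your two observations --- that $I_{\phi}=\{2\}$ in the quadratic case and that $\max I_{\phi}=\deg\phi$ in general --- are precisely the comparisons of index sets needed to conclude. Your reading of the hypothesis as ``degree at least $2$'' is also the right interpretation of the statement's wording.
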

	
	\begin{rmk}
		Notice that if we consider $\phi_m(x)=x^{m}+ax$ with $m\ge 2$, then $\dim(\mathfrak{h}_{\phi_m})=3$. 
		The structure of solvable Lie algebras of dimension $3$ 
		was studied by J.\@~Patera and H.\@~Zassenhaus in \cite{PATERA19901}. 
		In page 4, the authors 
		define the Lie algebra 
		$L_{3,6}=\langle a_1,a_2,a_3\rangle$ with brackets
		$$[a_1,a_2]= a_3\qquad [a_1,a_3]= a_3-\alpha \cdot a_2$$
		with parameter $\alpha$ satisfying $\alpha \neq 0$ and $1-4\alpha\neq 0$,
		this parameter $\alpha$ is in one-to-one correspondence with isomorphism classes of this kind of algebras.
		Its associated matrix $\psi_{\alpha}(1)$ is 	
		\begin{equation*}
			\psi_{\alpha}(1)=
			\begin{pmatrix}
				0 & -\alpha\\
				1 & 1
			\end{pmatrix}
		\end{equation*}
		The eigenvalues of $\psi_{\alpha}(1)$ are 
		$$\lambda_0=\tfrac{1-\sqrt{1-4\alpha}}{2}\quad \text{and} \quad \lambda_1=\tfrac{1+\sqrt{1-4\alpha}}{2}.$$
		On the other hand, if we consider $\phi_m(x)=x^{m}+ax$ with $m\ge 2$, 
		then $\mathfrak{h}_{\phi_m}$ has dimension $3$ and
		its associated matrix $\Psi_m$ is conformally similar to 
		$$ \begin{pmatrix}
			1 & 0\\
			0 & m
		\end{pmatrix}$$ 
		thus, $\Psi_m$ is conformally similar to $\psi_{\alpha}(1)$ if and only if 
		$$\lambda_0=r, \qquad \lambda_1=rm \qquad \text{for some complex number $r$},$$
		since diagonalizable matrices are similar if and only if its spectrum are equal. 
		This system of equation has a solution $r=\tfrac{1}{m+1}$ and $\alpha= \tfrac{m}{(m+1)^{2}}$.
		Therefore $\mathfrak{h}_{\phi_m} \cong L_{3,6}^{\alpha}$ with $\alpha=\tfrac{m}{(m+1)^{2}}$.
		
	\end{rmk}
	
	\section{Laguerre type solutions}
	Let $A\in M_N(\mathbb{C})$ be a constant matrix and let $\nu\in \mathbb{R}$ such that $\nu>0$. 
	In this section and the sequel, we are going to 
	consider the weight matrix  $W$ given by
	\begin{equation}\label{def: Wnu}
		W^{(\nu)}(x) = e^{Ax} T^{(\nu)}(x) e^{A^\ast x},\qquad T^{(\nu)}(x) =  e^{-x} \sum_{k=1}^N \delta^{(\nu)}_k x^{\nu+k}  E_{k,k}.	\end{equation}
	If we denote $L(x)=e^{Ax}$, then 
	\begin{equation}\label{rel W T}
		W^{(\nu)}(x)=L(x) T^{(\nu)}(x) L^{*}(x). 
	\end{equation}
	Recall that if $A,J$ are as in \eqref{A J}, the equation \eqref{corchete A J} 
	in terms of $L$ says that
	\begin{equation}\label{rel L J}
		L(x)J L^{-1}(x)=J- Ax.
	\end{equation}
	In this case, we obtain the same kind of weight that was consider in section 3, with $\phi(x)=x$.
	
	\begin{prop}\label{prop cal D y D dag}
		The first order differential operators
		$$\mathcal{D} = \partial_x x + x(A-1), \quad 
		\mathcal{D}^\dagger = -\partial_x x - (1+\nu +J),$$
		are mutually adjoint and 
		satisfy
		\begin{align*}
			M = \psi^{-1}(\mathcal{D}) = & (A-1)\delta-(n+1+\nu)-\mathcal{H}(n)J\mathcal{H}^{-1}(n),\\
			M^\dagger = \psi^{-1}(\mathcal{D}^{\dagger}) =& -(n+\nu+J+1) + \mathcal{H}(n)(A-1)^\ast \mathcal{H}^{-1}(n-1) \delta^{-1}.
		\end{align*}
	\end{prop}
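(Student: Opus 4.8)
The plan is to reduce everything to Proposition~\ref{ort D, Ddag} and to an elementary degree/leading-coefficient analysis, combined with the adjoint machinery of Section~\ref{sec:pre}. For the mutual adjointness: taking $\phi(x)=x$ one has $\phi'(x)=1$, hence $x\phi'(x)-x=0$, so the operator $-\partial_x x-(1+\nu+J)+x\phi'(x)-x$ of Proposition~\ref{ort D, Ddag} collapses to exactly $\mathcal{D}^\dagger=-\partial_x x-(1+\nu+J)$. Thus $\mathcal{D}$ and $\mathcal{D}^\dagger$ are mutually adjoint with respect to $W^{(\nu)}$, and by the properties of the generalized Fourier map recalled in Section~\ref{sec:pre} (in particular $\varphi(M^\dagger)=\varphi(M)^\dagger$) the associated discrete operators, defined by $M\cdot P_n=P_n\cdot\mathcal{D}$ and $M^\dagger\cdot P_n=P_n\cdot\mathcal{D}^\dagger$, satisfy $M^\dagger=\psi^{-1}(\mathcal{D}^\dagger)=\psi^{-1}(\mathcal{D})^\dagger$.

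Next I would pin down the shape of $M$ and $M^\dagger$. Acting on the right, $(Q\cdot\mathcal{D})(x)=xQ'(x)+xQ(x)(A-1)$ raises degrees by at most one, whereas $(Q\cdot\mathcal{D}^\dagger)(x)=-xQ'(x)-Q(x)(1+\nu+J)$ preserves degrees; this cancellation of the $x$-terms is special to $\phi(x)=x$. Expanding $P_n\cdot\mathcal{D}$ in the orthogonal basis $\{P_m\}$ and using $\langle P_n\cdot\mathcal{D},P_m\rangle=\langle P_n,P_m\cdot\mathcal{D}^\dagger\rangle$, which vanishes for $m<n$ because $P_m\cdot\mathcal{D}^\dagger$ has degree $\le m$ and vanishes for $m>n+1$ by the degree bound on $\mathcal{D}$, one obtains $P_n\cdot\mathcal{D}=A_0(n)P_n+A_1(n)P_{n+1}$, i.e.\ $M=A_0(n)+A_1(n)\delta$; comparing the $x^{n+1}$-coefficients gives $A_1(n)=A-1$. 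Dually $M^\dagger=B_0(n)+B_{-1}(n)\delta^{-1}$, and comparing the $x^n$-coefficients in $P_n\cdot\mathcal{D}^\dagger=B_0(n)P_n+B_{-1}(n)P_{n-1}$ gives $B_0(n)=-(n+1+\nu+J)$. (Alternatively one may invoke Theorem~\ref{ Accion general D M} with $v'(x)=-(1+\nu)-x$, noting that here $\mathcal{D}^\dagger$ does not increase degrees, so $A_{-1}(n)=0$ exactly as in the proof of Corollary~\ref{coro v grado 1}.)

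Finally I would close the loop through the adjoint formula \eqref{eq:adjointM}--\eqref{eq:star}. From $M=A_0(n)+(A-1)\delta$ and \eqref{eq:star} one gets $M^\ast=A_0(n)^\ast+(A-1)^\ast\delta^{-1}$, hence, using the composition rule $\delta^{-1}\mathcal{H}(n)^{-1}=\mathcal{H}(n-1)^{-1}\delta^{-1}$, $M^\dagger=\mathcal{H}(n)M^\ast\mathcal{H}(n)^{-1}=\mathcal{H}(n)A_0(n)^\ast\mathcal{H}(n)^{-1}+\mathcal{H}(n)(A-1)^\ast\mathcal{H}(n-1)^{-1}\delta^{-1}$. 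Matching this against $M^\dagger=B_0(n)+B_{-1}(n)\delta^{-1}$ yields $B_{-1}(n)=\mathcal{H}(n)(A-1)^\ast\mathcal{H}(n-1)^{-1}$ (the claimed form of $M^\dagger$) together with $\mathcal{H}(n)A_0(n)^\ast\mathcal{H}(n)^{-1}=-(n+1+\nu+J)$; taking adjoints and using $\mathcal{H}(n)^\ast=\mathcal{H}(n)$ and $J^\ast=J$, since $J$ in \eqref{A J} is real and diagonal, gives $A_0(n)=-\mathcal{H}(n)(n+1+\nu+J)\mathcal{H}(n)^{-1}=-(n+1+\nu)-\mathcal{H}(n)J\mathcal{H}(n)^{-1}$, which is the claimed form of $M$. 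The main obstacle is essentially bookkeeping with the $\ast$-operation \eqref{eq:star} and the rule $\delta^{j}F(n)=F(n+j)\delta^{j}$; the point of extracting $A_0(n)$ from $M^\dagger=\mathcal{H}(n)M^\ast\mathcal{H}(n)^{-1}$ rather than from next-to-leading coefficients is precisely to avoid bringing in $X(n)$, $B(n)$ and the Pearson-type identities relating them.
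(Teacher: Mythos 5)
Your proposal is correct and follows essentially the same route as the paper: mutual adjointness by specializing Proposition \ref{ort D, Ddag} to $\phi(x)=x$, the band structure and the coefficients $A_1(n)=A-1$, $B_0(n)=-(n+1+\nu+J)$ from degree/leading-coefficient considerations (the paper cites Corollary \ref{coro v grado 1} for this, which you also note as an alternative), and then the remaining coefficients via the adjoint formula \eqref{eq:adjointM}. Your bookkeeping with \eqref{eq:star} and $\delta^{-1}\mathcal{H}(n)^{-1}=\mathcal{H}(n-1)^{-1}\delta^{-1}$ is accurate, and you correctly identify why $A_0(n)$ is best obtained from the $\dagger$-relation rather than from next-to-leading coefficients.
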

	\begin{proof}
		The operators $\mathcal{D}$ and $\mathcal{D^{\dagger}}$ are mutually adjoint by taking $\phi(x)=x$ in Proposition \ref{ort D, Ddag}. 
		
		On the other hand, since $v(x)$ has degree $1$ in this case, by Corollary \ref{coro v grado 1} 
		we obtain that $A_{j}(n)=0$ for $j\le -1$ and $A_{1}(n)=A-1$.
		
		Finally, the formula for $A^{\dagger}_{0}(n)$ can be obtained directly from the relation $P\cdot \mathcal{D}^{\dagger}=M_{\mathcal{D}^{\dagger}}\cdot P$ 
		and the term $A_{0}(n)$ can be obtained from equation \eqref{eq:adjointM}.
	\end{proof}
	
	As a direct consequence of the above proposition and  Corollary \ref{coro v grado 1} we obtain the following result.
	\begin{cor}\label{Coro formula A0n}
		Let $A,J\in M_{N}(\mathbb{C})$ as in \eqref{A J} and let $W:=W^{(\nu)}_{\phi}$ be a matrix weight as in \eqref{W eq} with $\phi(x)=x$ 
		and MVOPs $P(x,n)$. 
		If $X(n)$ and $Y(n)$ are the coefficients of the $(n-1)$-term and $(n-2)$-term of $P(x,n)$ respectively, then
		\begin{equation}\label{fla A0n}
			n+X(n)A - A X(n+1)-B(n) = -(n+1+\nu)-\mathcal{H}(n)J\mathcal{H}^{-1}(n),
		\end{equation}
		\begin{equation}\label{fla Ad-1n}
			X(n)+[J,X(n)]=\mathcal{H}(n)(A^{\ast}-1) \mathcal{H}^{-1}(n-1),
		\end{equation}
		where $\mathcal{H}(n)$ and $B(n)$are as in \eqref{equation Hn} and \eqref{B C}. 
	\end{cor}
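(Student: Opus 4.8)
The plan is to read off both identities by equating two independently derived expressions for the discrete operators $M$ and $M^{\dagger}$ attached to $\mathcal{D}$ and $\mathcal{D}^{\dagger}$ through $P\cdot\mathcal{D}=M\cdot P$ and $P\cdot\mathcal{D}^{\dagger}=M^{\dagger}\cdot P$, where $P=P(x,n)$ are the monic MVOPs of $W^{(\nu)}_{\phi}$ with $\phi(x)=x$.

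For the first identity, observe that here $v(x)=x$ has degree $1$, so Corollary \ref{coro v grado 1} applies and gives $M=A_0(n)+(A-1)\delta$ with $A_0(n)=n+X(n)A-AX(n+1)-B(n)$. On the other hand, Proposition \ref{prop cal D y D dag} computes the same operator as $M=(A-1)\delta-(n+1+\nu)-\mathcal{H}(n)J\mathcal{H}^{-1}(n)$. Equating the coefficients of $\delta^{0}$ in these two expressions yields \eqref{fla A0n}.

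For the second identity, recall from Proposition \ref{prop cal D y D dag} that the coefficient of $\delta^{-1}$ in $M^{\dagger}$ equals $\mathcal{H}(n)(A-1)^{\ast}\mathcal{H}^{-1}(n-1)=\mathcal{H}(n)(A^{\ast}-1)\mathcal{H}^{-1}(n-1)$, since $1$ is self-adjoint. It then suffices to compute that same coefficient directly from $P\cdot\mathcal{D}^{\dagger}=M^{\dagger}\cdot P$. As $\mathcal{D}^{\dagger}=-\partial_x x-(1+\nu+J)$ does not raise degrees, we have $M^{\dagger}=A^{\dagger}_0(n)+A^{\dagger}_{-1}(n)\delta^{-1}$. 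Writing $P(x,n)=x^{n}+X(n)x^{n-1}+\cdots$ and expanding $P\cdot\mathcal{D}^{\dagger}=-xP'(x,n)-P(x,n)(1+\nu+J)$, comparison of the coefficients of $x^{n}$ gives $A^{\dagger}_0(n)=-(n+1+\nu)-J$, and comparison of the coefficients of $x^{n-1}$ in $P\cdot\mathcal{D}^{\dagger}=M^{\dagger}\cdot P$ gives $A^{\dagger}_0(n)X(n)+A^{\dagger}_{-1}(n)=-(n-1)X(n)-X(n)(1+\nu+J)$. Substituting the value of $A^{\dagger}_0(n)$, the coefficient of $X(n)$ collapses to $1$ and one is left with $A^{\dagger}_{-1}(n)=X(n)+[J,X(n)]$. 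Comparing the two expressions for $A^{\dagger}_{-1}(n)$ produces \eqref{fla Ad-1n}.

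The computation is routine; the only point requiring care is the non-commutativity of $J$ with $X(n)$, which is precisely what turns $JX(n)-X(n)J$ into the commutator $[J,X(n)]$ appearing in \eqref{fla Ad-1n}, together with keeping track of the side on which the matrix coefficients of $\mathcal{D}^{\dagger}$ act under the right action of a differential operator. I do not expect any genuine obstacle here.
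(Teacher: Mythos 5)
Your proposal is correct and follows essentially the same route as the paper: identity \eqref{fla A0n} comes from equating the $\delta^{0}$ coefficient of $M$ given by Corollary \ref{coro v grado 1} with the one given by Proposition \ref{prop cal D y D dag}, and identity \eqref{fla Ad-1n} comes from computing $A^{\dagger}_{-1}(n)$ directly from the degree expansion of $P\cdot\mathcal{D}^{\dagger}=M^{\dagger}\cdot P$ and matching it with the $\delta^{-1}$ coefficient of $M^{\dagger}$ in that proposition. Your explicit cancellation yielding $X(n)+[J,X(n)]$ correctly fills in the details the paper leaves implicit.
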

	\begin{proof}
		It follows from Proposition \ref{prop cal D y D dag} and Corollary \ref{coro v grado 1}, by taking into account the term $A_{0}(n)$ of $\mathcal{D}$.
		
		On the other hand, from the equality $P\cdot \mathcal{D}^{\dagger}= M^{\dagger}\cdot P$ with
		$$\mathcal{D}^{\dagger}= -\partial_x x - (1+\nu +J) \quad \text{and} \quad  M^{\dagger}=-(n+\nu+J+1)+A^{\dagger}_{-1}(n) \delta^{-1},$$
		we can obtained the equation \eqref{fla Ad-1n} from the above theorem. 
	\end{proof}
	
	\subsection*{Existence of the operator $D$}
	Families of matrix valued orthogonal polynomials which are eigenfunctions of a second order differential operator are of great importance, see e.g. \cite{Duran2009_2}, \cite{DuranG1}, \cite{GdIM}, \cite{KdlRR}.  Using the approach of \cite{KdlRR}, we get a symmetric second-order differential operator which preserves polynomials and its degree. For this we establish a conjugation with a diagonal matrix differential operator. 
	
	Let us consider matrix valued polynomials $F_2$, $F_1$, $F_0$ of degrees two, one and zero respectively and let us assume that we have a matrix valued second-order differential operator $D$ such that
	\begin{equation}
		\label{eq:form-differential-operator-general}
		Q\cdot D=\left(\tfrac{d^2Q}{dx^2}\right)(x) \, F_2(x) + \left(\tfrac{dQ}{dx}\right)(x) \, F_1(x) +Q(x) F_0(x).
	\end{equation}
	for a matrix valued polynomial $Q$. It follows from the definition of the matrix valued inner product \eqref{eq:HermitianForm} that a differential operator $D$ is symmetric with respect to $W$ if for all matrix valued polynomials $G,H$ we have
	$$\int_0^\infty (GD)(x)W(x)(H(x))^\ast\, dx = \int_0^\infty G(x)W(x)((HD)(x))^\ast dx.$$
	By \cite[Thm~3.1]{DuranG1}, this symmetry condition is equivalent to the following equations
	\begin{gather}
		\label{eq:symmetry-conditions}
		F_2(x)W(x) = W(x) \bigl( F_2(x)\bigr)^\ast, \qquad 
		2 \tfrac{d(F_2W)}{dx}(x) - F_1(x)W(x) = W(x) \bigl( F_1(x)\bigr)^\ast, \\
		\label{eq:symmetry-conditions2}
		\tfrac{d^2(F_2W)}{dx^2}(x) - \tfrac{d(F_1W)}{dx}(x) + F_0(x) W(x) = W(x) \bigl( F_0(x)\bigr)^\ast,
	\end{gather}
	and	the boundary conditions 
	\begin{gather}
		\label{eq:symmetry-boundary1}
		\lim_{x\to 0} F_2(x)W(x) = 0 = \lim_{x\to \infty} F_2(x)W(x), \\ 
		\label{eq:symmetry-boundary2}
		\lim_{x\to 0} F_1(x)W(x) - \tfrac{d(F_2W)}{dx}(x) = 0 = 
		\lim_{x\to \infty} F_1(x)W(x) - \tfrac{d(F_2W)}{dx}(x).
	\end{gather}
	
	We have the following lemma.
	\begin{lem}\label{ Lema DQ}
		The second order differential operator $$D_{Q}=\partial_{x}^2 x +\partial_x ( 1+\nu-x+J) -J$$ 
		is symmetric respect to the weight $T^{(\nu)}(x)$.
	\end{lem}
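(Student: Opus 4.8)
The plan is to verify directly the Durán–Grünbaum symmetry criterion quoted here as \eqref{eq:symmetry-conditions}--\eqref{eq:symmetry-boundary2}, with the weight $W$ replaced by $T:=T^{(\nu)}$. First I would rewrite the right action of $D_Q$ in the standard form \eqref{eq:form-differential-operator-general}: since $Q\cdot D_Q=(\partial_x^2 Q)\,x+(\partial_x Q)(1+\nu-x+J)+Q(-J)$, the coefficient matrices are
$$F_2(x)=x\,I,\qquad F_1(x)=(1+\nu-x)I+J,\qquad F_0(x)=-J.$$
The structural facts I would record are that $T(x)$ is diagonal with entries $\delta^{(\nu)}_k e^{-x}x^{\nu+k}$, which are real for $x>0$, so $T(x)^\ast=T(x)$; that $J$ is real diagonal, so $J^\ast=J$ and $JT(x)=T(x)J$, and all the matrices above commute; and the Pearson-type identity $x\,T'(x)=T(x)(\nu+J-x)$, which is exactly the relation $xT'(x)=T(x)(-x\phi'(x)+\nu+J)$ used in the proof of Proposition \ref{ort D, Ddag} specialized to $\phi(x)=x$.

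Next I would check the three algebraic identities. The first, $F_2T=TF_2^\ast$, is immediate because $F_2$ is scalar and $T$ is Hermitian. For the second I would compute
$$\tfrac{d}{dx}\bigl(F_2(x)T(x)\bigr)=\tfrac{d}{dx}\bigl(xT(x)\bigr)=T(x)+xT'(x)=T(x)\bigl(1+\nu-x+J\bigr),$$
using the Pearson identity, and observe that $F_1(x)T(x)=T(x)\bigl((1+\nu-x)I+J\bigr)=T(x)(1+\nu-x+J)=T(x)F_1(x)^\ast$ by commutativity and $J^\ast=J$; hence $2\frac{d}{dx}(F_2T)-F_1T=T(1+\nu-x+J)=TF_1^\ast$. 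For the third, the key observation is that $F_1(x)T(x)=\frac{d}{dx}(F_2(x)T(x))$, so $\frac{d}{dx}(F_1T)=\frac{d^2}{dx^2}(F_2T)$ and the left-hand side of \eqref{eq:symmetry-conditions2} collapses to $F_0T=-JT$, which equals $-TJ=TF_0^\ast$.

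Finally the boundary conditions: $F_2(x)T(x)=xT(x)$ has entries $\delta^{(\nu)}_k e^{-x}x^{\nu+k+1}$, which tend to $0$ as $x\to0$ because $\nu>0$ and as $x\to\infty$ because of the factor $e^{-x}$, giving \eqref{eq:symmetry-boundary1}; and since $F_1(x)T(x)=\frac{d}{dx}(F_2(x)T(x))$ identically, the expression in \eqref{eq:symmetry-boundary2} vanishes identically, so its limits are trivially zero. I do not expect a genuine obstacle here: the computation is elementary once the Pearson identity is in hand, the only point worth isolating being the telescoping $F_1T=\frac{d}{dx}(F_2T)$, which simultaneously trivializes the second-order symmetry condition \eqref{eq:symmetry-conditions2} and the boundary condition \eqref{eq:symmetry-boundary2}.
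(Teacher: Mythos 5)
Your proposal is correct and follows the same route as the paper: the paper's proof simply invokes the symmetry criterion \eqref{eq:symmetry-conditions}--\eqref{eq:symmetry-boundary2} with $F_2(x)=x$, $F_1(x)=1+\nu-x+J$, $F_0(x)=-J$ and declares the verification straightforward, which is exactly the computation you carry out (correctly, via the Pearson identity $xT'(x)=T(x)(\nu+J-x)$ and the telescoping $F_1T=\tfrac{d}{dx}(F_2T)$).
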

	\begin{proof}
		By taking into account that $F_2(x)=x$, $F_{1}(x)= 1+\nu -x+J$ and $F_{0}(x)=-J$, it is a straightforward computation see that 
		\eqref{eq:symmetry-boundary1}, \eqref{eq:symmetry-boundary2}, \eqref{eq:symmetry-conditions} and \eqref{eq:symmetry-conditions2} are satisfied for $D_Q$ and $T^{(\nu)}(x)$.
	\end{proof}
	
	\begin{prop} \label{Prop Gama}
		The second order differential operator 
		$$D=\partial_{x}^2 x +\partial_x ( (A-1)x +1+\nu+J) +A\nu + JA -J$$ 
		is symmetric respect to the weight $W(x)$. Moreover
		$$P\cdot D = \Gamma \cdot P \quad \text{where} \quad \Gamma(n)=A(n+\nu+1+J)-n-J.$$
	\end{prop}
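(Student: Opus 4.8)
The plan is to obtain $D$ by conjugating the operator $D_Q$ of Lemma~\ref{ Lema DQ} by the matrix polynomial $L(x)=e^{Ax}$. Note that $A$ is nilpotent, so $L$ and $L^{-1}=e^{-Ax}$ are genuine matrix polynomials, and recall the factorization $W(x)=L(x)T^{(\nu)}(x)L^{\ast}(x)$ of \eqref{rel W T}. This is the device used in \cite{KdlRR}.

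\emph{First}, I would record the general conjugation principle: if $\widetilde{D}$ is a differential operator symmetric with respect to $T^{(\nu)}$, then the operator defined on matrix polynomials by $Q\cdot D=\big((Q\,L)\cdot\widetilde{D}\big)\,L^{-1}$, where $Q\,L$ denotes the pointwise product $x\mapsto Q(x)L(x)$, is symmetric with respect to $W$. Indeed, starting from $\int_0^\infty (G\cdot D)\,W\,H^{\ast}\,dx$ and using $L^{-1}W=T^{(\nu)}L^{\ast}$ together with $(HL)^{\ast}=L^{\ast}H^{\ast}$, one rewrites this as $\int_0^\infty\big((GL)\cdot\widetilde{D}\big)\,T^{(\nu)}\,(HL)^{\ast}\,dx$; applying the symmetry of $\widetilde{D}$ and then running the same manipulation backwards with $W\,(L^{\ast})^{-1}=L\,T^{(\nu)}$ yields $\int_0^\infty G\,W\,(H\cdot D)^{\ast}\,dx$.

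\emph{Second}, I would identify the conjugate of $D_Q$ explicitly. Since $L'=AL=LA$ we have $(QL)'=(Q'+QA)L$ and $(QL)''=(Q''+2Q'A+QA^2)L$, and using $L\,x\,L^{-1}=x$ together with $L\,J\,L^{-1}=J-Ax$ from \eqref{rel L J} one obtains
\begin{align*}
Q\cdot D&=(Q''+2Q'A+QA^2)\,x+(Q'+QA)\,(1+\nu+J-x-Ax)+Q\,(Ax-J)\\
&=Q''\,x+Q'\big((A-1)x+1+\nu+J\big)+Q\,(A+\nu A+AJ-J),
\end{align*}
after the $A^2x$ and $Ax$ contributions cancel. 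Finally, $[J,A]=A$ from \eqref{corchete A J} gives $A+AJ=JA$, so the zero-order coefficient equals $A\nu+JA-J$; hence $D=\partial_x^2 x+\partial_x\big((A-1)x+1+\nu+J\big)+A\nu+JA-J$, as claimed. Together with the first step and Lemma~\ref{ Lema DQ} (and the sanity check that setting $A=0$ recovers $D_Q$) this proves the symmetry of $D$ with respect to $W$.

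\emph{Third}, for the eigenvalue relation, observe that each summand $Q''x$, $Q'\big((A-1)x+1+\nu+J\big)$, $Q(A\nu+JA-J)$ has degree $\le\deg Q$, so $D$ preserves polynomials of degree $\le n$ for every $n$. Consequently $P(x,n)\cdot D=\sum_{m=0}^n\Gamma_m(n)P(x,m)$ with $\Gamma_m(n)=\langle P(x,n)\cdot D,P(x,m)\rangle\mathcal{H}(m)^{-1}$; by the symmetry of $D$ and since $\deg\big(P(x,m)\cdot D\big)\le m<n$, we get $\langle P(x,n)\cdot D,P(x,m)\rangle=\langle P(x,n),P(x,m)\cdot D\rangle=0$ for $m<n$, so $P\cdot D=\Gamma(n)\cdot P$ with $\Gamma(n):=\Gamma_n(n)$ a difference operator of order zero. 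Comparing the coefficient of $x^n$ on both sides, and using that $P(x,n)$ is monic, gives $\Gamma(n)=n(A-1)+A\nu+JA-J$, which by $[J,A]=A$ equals $A(n+\nu+1+J)-n-J$. The only genuinely delicate point is the bookkeeping: keeping the conjugation convention (which side carries $L$, which $L^{-1}$) consistent with the right-action formalism, and correctly reducing the zero-order term via $[J,A]=A$; everything else is routine linear algebra and the integration by parts already carried out for Lemma~\ref{ Lema DQ}.
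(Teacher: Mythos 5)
Your proof is correct. The paper itself does not prove this proposition at all: its ``proof'' is the single line ``It follows from Remark 4.1 in \cite{koelink2019matrix}'', so the entire content is outsourced to an external reference. What you have done is supply a self-contained argument using exactly the machinery the paper has already set up around the statement: Lemma \ref{ Lema DQ} gives the symmetry of $D_Q$ with respect to $T^{(\nu)}$, and the conjugation $D_Q=L(x)^{-1}DL(x)$ that you exploit is precisely the relation the paper records later in Lemma \ref{lemma CQs}. Your three steps all check out: the transport of symmetry through $W=LT^{(\nu)}L^{\ast}$ is legitimate because $A$ is nilpotent, so $L^{\pm1}=e^{\pm Ax}$ are matrix polynomials and the symmetry of $D_Q$ (which holds for all matrix polynomials) applies to $GL$ and $HL$; the computation of the conjugated coefficients via $LxL^{-1}=x$, $LJL^{-1}=J-Ax$ and $AJ=JA-A$ reproduces the stated $D$ (the $A^2x$ and $Ax$ cancellations are as you say); and the eigenvalue $\Gamma(n)=n(A-1)+A\nu+JA-J=A(n+\nu+1+J)-n-J$ follows from the standard degree-preservation-plus-symmetry argument by matching leading coefficients of the monic $P(x,n)$. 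The trade-off is the expected one: the citation is shorter but leaves the reader to unwind the conventions of \cite{koelink2019matrix} (where the weight differs from $W^{(\nu)}$ by conjugation with $L_\mu^{(\alpha)}(0)$), whereas your argument is longer but verifiable within the paper and makes explicit where each structural identity ($[J,A]=A$, $e^{xA}Je^{-xA}=J-Ax$) enters.
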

	\begin{proof}
		It follows from Remark 4.1 in  \cite{koelink2019matrix}. 
	\end{proof}
	
	\subsection*{The Lie Algebra associated to $W$} \
	
	Recall that a Lie algebra $\mathfrak{g}$ is called reductive if its radical is equal to its center. 
	
	\begin{lem}\label{bracket D x }
		Let $A,J\in M_{N}(\mathbb{C})$ as in \eqref{A J} and 
		let us consider the operators
		{\small $$\mathcal{D} = \partial_x x + x(A-1), \, 
			\mathcal{D}^\dagger = -\partial_x x - (1+\nu+J), \, 
			D=\partial_{x}^2 x +\partial_x ( (A-1)x +1+\nu+J) +A\nu + JA -J.$$}
		If $x$ acts over matrix valued polynomials by right constant multiplication, 
		then we have that 
		$$[\mathcal{D},x] = -x,\qquad [\mathcal{D}^\dagger,x]=x,\qquad [\mathcal{D},\mathcal{D}^\dagger] = x, \qquad [D,x]=-\mathcal{D}+\mathcal{D}^{\dagger},$$
		$$[\mathcal{D},D]= -\mathcal{D}+ D - (1+\nu), \quad [\mathcal{D}^{\dagger},D]= \mathcal{D}^{\dagger}- D+ (1+\nu).$$
		The subjacent Lie algebra generated by $\{\mathcal{D}, \mathcal{D}^\dagger, D,x, I\}$	is isomorphic to the Lie algebra \break
		$\mathcal{A}=\langle x_1,x_{2},x_{3},x_{4},x_{5} \rangle$ with brackets
		$$[x_1,x_2]=-x_4,\quad [x_{1},x_4]=-x_4 ,\quad [x_2,x_4]=x_4, \quad [x_3,x_4]=-x_1+x_2,$$
		$$[x_1,x_3]= -x_2+ x_3 +x_4 - (1+\nu)x_5 \quad \text{and} \quad[x_2,x_3]= x_1 - x_3 -x_4 + (1+\nu)x_5,$$
		with correspondence:
		$$x_1\longmapsto \mathcal{D}+x,\quad x_2\longmapsto \mathcal{D}^{\dagger}+x, \quad x_3\longmapsto D,\quad x_4\longmapsto x, \quad x_5 \longmapsto 1.$$
	\end{lem}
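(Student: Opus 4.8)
The statement splits naturally into two tasks: (i) verifying the six bracket relations among $\mathcal D$, $\mathcal D^\dagger$, $D$ and $x$, and (ii) recognizing the resulting $5$-dimensional Lie algebra abstractly via the stated change of generators. For (i) the plan is to compute directly, treating everything as operators acting on the right on $M_N(\mathbb C)[x]$, exactly as in Lemma \ref{bracket D x xphi} specialized to $\phi(x)=x$. The brackets $[\mathcal D,x]=-x$, $[\mathcal D^\dagger,x]=x$ and $[\mathcal D,\mathcal D^\dagger]=x$ are immediate from that lemma (with $\phi'(x)=1$, $\phi^{(2)}(x)=0$, so $-x^2\phi^{(2)}(x)+(2-\phi'(x))x = x$). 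The genuinely new computations are the three brackets involving $D$. Here I would use $D=\partial_x^2 x+\partial_x((A-1)x+1+\nu+J)+A\nu+JA-J$ together with the basic operator identities $\partial_x x=x\partial_x+1$ and the commutation rule $[\partial_x, x\,(\cdot)]$ understood as right multiplication; equivalently, since by Proposition \ref{Prop Gama} we have $P\cdot D=\Gamma(n)\cdot P$ with $\Gamma(n)=A(n+\nu+1+J)-n-J$, and by Corollary \ref{Coro formula A0n} $P\cdot\mathcal D = M\cdot P$, $P\cdot\mathcal D^\dagger=M^\dagger\cdot P$, $P\cdot x = L\cdot P$ with $L=\delta+B(n)+C(n)\delta^{-1}$, one can transport every bracket to the discrete side and verify the identities there — but the cleanest route is the direct symbol computation in $\mathcal M_N$.

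For the three brackets with $D$, I would proceed as follows. First compute $[D,x]$: writing $D$ as above and using that $x$ acts by right multiplication, the second-order part $\partial_x^2 x$ contributes the first-derivative term responsible for $-\mathcal D$, the first-order part contributes $+\mathcal D^\dagger$-type terms, and the zero-order part commutes with $x$; collecting gives $[D,x]=-\mathcal D+\mathcal D^\dagger$ after substituting $\mathcal D=\partial_x x+x(A-1)$ and $\mathcal D^\dagger=-\partial_x x-(1+\nu+J)$. Next $[\mathcal D,D]$: expand $\mathcal D D$ and $D\mathcal D$ as compositions of differential operators with polynomial matrix coefficients, use $[J,A]=A$ from \eqref{corchete A J} to handle the cross terms between $A$ and $J$, and simplify; the claim is that the result reorganizes into $-\mathcal D+D-(1+\nu)$. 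Then $[\mathcal D^\dagger,D]$ follows either by an analogous direct computation or, more economically, by applying the adjoint operation $\dagger$: since $\mathcal D^{\dagger\dagger}=\mathcal D$ and $D^\dagger=D$ (Proposition \ref{Prop Gama} asserts $D$ is symmetric), taking $\dagger$ of $[\mathcal D,D]=-\mathcal D+D-(1+\nu)$ and using that $\dagger$ is an anti-homomorphism on the Fourier algebra gives $[D,\mathcal D^\dagger]=-\mathcal D^\dagger+D-(1+\nu)$, i.e. $[\mathcal D^\dagger,D]=\mathcal D^\dagger-D+(1+\nu)$, as stated. (One must be slightly careful that constants like $1+\nu$ and the identity operator are self-adjoint, which they are.)

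For part (ii), substitute $x_1=\mathcal D+x$, $x_2=\mathcal D^\dagger+x$, $x_3=D$, $x_4=x$, $x_5=1$ and recompute the brackets from the six relations just established, using bilinearity and that $x_5=1$ is central. For instance $[x_1,x_2]=[\mathcal D+x,\mathcal D^\dagger+x]=[\mathcal D,\mathcal D^\dagger]+[\mathcal D,x]+[x,\mathcal D^\dagger]=x+(-x)+(-x)=-x=-x_4$; $[x_1,x_4]=[\mathcal D+x,x]=[\mathcal D,x]=-x=-x_4$; $[x_2,x_4]=[\mathcal D^\dagger,x]=x=x_4$; $[x_3,x_4]=[D,x]=-\mathcal D+\mathcal D^\dagger=-(x_1-x_4)+(x_2-x_4)=-x_1+x_2$; and $[x_1,x_3]=[\mathcal D+x,D]=[\mathcal D,D]+[x,D]=(-\mathcal D+D-(1+\nu))-(-\mathcal D+\mathcal D^\dagger)=D-\mathcal D^\dagger-(1+\nu)=x_3-(x_2-x_4)-(1+\nu)x_5=-x_2+x_3+x_4-(1+\nu)x_5$, with $[x_2,x_3]$ the negative-symmetric counterpart. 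These are precisely the listed brackets, so the linear map sending the abstract generators $x_i$ to these operators is a Lie algebra isomorphism onto its image, provided we check the five operators are linearly independent over $\mathbb C$ — which is clear since $D$ is second order, $\mathcal D,\mathcal D^\dagger$ are first order with independent symbols (one has $+\partial_x x$, the other $-\partial_x x$ together with independent zero-order parts), $x$ is zero-order non-constant, and $1$ is the identity. I expect the main obstacle to be purely computational bookkeeping in $[\mathcal D,D]$: keeping track of the noncommuting factors $\partial_x$, $x$, $A$, $J$ and correctly using $[J,A]=A$ so that the stray terms collapse into the advertised combination $-\mathcal D+D-(1+\nu)$ rather than something with leftover $J$- or $A$-dependence.
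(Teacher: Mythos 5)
Your proposal is correct and follows essentially the same route as the paper: the paper's proof simply asserts the bracket relations ``by direct computation'' (stated in terms of $\mathcal D'=\mathcal D+x$ and $\mathcal D'^{\dagger}=\mathcal D^{\dagger}+x$) and then reads off the abstract Lie algebra from the stated correspondence, exactly as you do. Your shortcut of deducing $[\mathcal D^{\dagger},D]$ from $[\mathcal D,D]$ via the anti-homomorphism property of $\dagger$ and the symmetry $D^{\dagger}=D$ is harmless, and your explicit bracket verifications (including the cancellation via $[J,A]=A$ in $[\mathcal D,D]$ and the substitution check for the $x_i$) all agree with the paper's statements.
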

	
	\begin{proof}	
		Let $\mathcal{D}'=\mathcal{D}+x$ and $\mathcal{D}'^{\dagger}=\mathcal{D}^{\dagger}+x$.
		It can be shown by direct computation that
		$$[\mathcal{D}',x]=-x, \quad [\mathcal{D}'^\dagger,x]= x, \quad [\mathcal{D}',\mathcal{D}'^{\dagger}]=-x, \quad [D,x]=-\mathcal{D}'+\mathcal{D}'^{\dagger},$$
		$$[\mathcal{D}',D]= -\mathcal{D}'^{\dagger}+ D +x - (1+\nu), \quad [\mathcal{D}'^{\dagger},D]= \mathcal{D}'- D -x + (1+\nu).$$
		Clearly, the subjacent Lie algebra associated to this representation, is the Lie algebra \break $\mathcal{A}=\langle x_1,x_{2},x_{3},x_{4},x_{5} \rangle$ with brackets
		$$[x_1,x_2]=-x_4,\quad [x_{1},x_4]=-x_4 ,\quad [x_2,x_4]=x_4, \quad [x_3,x_4]=-x_1+x_2,$$
		$$[x_1,x_3]= -x_2+ x_3 +x_4 - (1+\nu)x_5 \quad \text{and} \quad[x_2,x_3]= x_1 - x_3 -x_4 + (1+\nu)x_5,$$
		with correspondence 
		$$x_1\longmapsto \mathcal{D}+x,\quad x_2\longmapsto \mathcal{D}^{\dagger}+x, \quad x_3\longmapsto D,\quad x_4\longmapsto x, \quad x_5 \longmapsto 1,$$
		as desired.		
	\end{proof}

	We have the following structure result.
	
	\begin{prop}
		The Lie algebra $\mathcal{A}$ defined as above is a 5-dimensional reductive algebra with center of dimension two, given by $\mathcal{Z}_{\mathcal{A}}=\langle x_{1}+x_2- x_4,x_5\rangle$. Moreover, 
		$\mathcal{A}=[\mathcal{A},\mathcal{A}]\oplus \mathcal{Z}_{\mathcal{A}}$  with
		$[\mathcal{A},\mathcal{A}]$ isomorphic to $\mathrm{SL}(2,\mathbb{C})$. In particular, 
		$$\mathcal{C}_1= -4x_4(x_1-x_3-x_4+(1+\nu)x_5)+(x_4-x_1+x_2)^2, \quad \mathcal{C}_2= x_{1}+x_2- x_4\quad\text{and} \quad \mathcal{C}_3=x_5$$ 
		are Casimir elements of $\mathcal{A}$.
	\end{prop}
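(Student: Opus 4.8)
The plan is to verify the structural claims in stages, using the explicit bracket table from Lemma~\ref{bracket D x } as the sole input. First I would determine the center $\mathcal{Z}_{\mathcal A}$ by writing a general element $v=\sum_{i=1}^5 c_i x_i$ and imposing $[v,x_j]=0$ for $j=1,\dots,5$. Since $x_5$ is manifestly central (it appears on no left-hand side of a bracket), the condition reduces to a linear system in $c_1,\dots,c_4$. The brackets $[x_1,x_4]=-x_4$, $[x_2,x_4]=x_4$ force $c_1=c_2$ after testing $[v,x_4]=0$; testing $[v,x_1]=0$ and $[v,x_2]=0$ then pins down the remaining relation, and one finds the solution space is spanned by $x_1+x_2-x_4$ and $x_5$, so $\dim\mathcal{Z}_{\mathcal A}=2$. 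This is routine linear algebra once the bracket table is in hand.

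Next I would compute $[\mathcal{A},\mathcal{A}]$. From the six listed brackets, the derived subalgebra is spanned by $x_4$, $-x_1+x_2$, and the element $-x_2+x_3+x_4-(1+\nu)x_5$ (equivalently, modulo what is already there, by $x_1-x_3-x_4+(1+\nu)x_5$); one checks this span is $3$-dimensional and that it is a subalgebra. To identify it with $\mathfrak{sl}(2,\mathbb C)$, I would exhibit an explicit $\mathfrak{sl}_2$-triple: set, say, $e\coloneqq x_4$, $h\coloneqq x_4-x_1+x_2$ (or a scalar multiple), and $f\coloneqq -(x_1-x_3-x_4+(1+\nu)x_5)$ up to normalization, then verify $[h,e]=2e$, $[h,f]=-2f$, $[e,f]=h$ directly from the table (absorbing constants as needed). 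Since $[\mathcal{A},\mathcal{A}]\cap\mathcal{Z}_{\mathcal A}=0$ — the center meets the derived algebra trivially because $x_5\notin[\mathcal A,\mathcal A]$ and the $x_1+x_2-x_4$ direction is transverse to the $\mathfrak{sl}_2$ part — and $\dim[\mathcal A,\mathcal A]+\dim\mathcal{Z}_{\mathcal A}=3+2=5=\dim\mathcal A$, we get the direct sum decomposition $\mathcal{A}=[\mathcal A,\mathcal A]\oplus\mathcal{Z}_{\mathcal A}$ as Lie algebras (the center acting trivially on everything). Reductivity is then immediate: the radical of $\mathfrak{sl}_2\oplus\mathbb C^2$ is exactly the $\mathbb C^2$ summand, which equals the center.

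Finally, for the Casimir elements: $\mathcal{C}_2=x_1+x_2-x_4$ and $\mathcal{C}_3=x_5$ are central by the first step, so they are trivially Casimir. For $\mathcal{C}_1$, the cleanest route is to recognize $-4x_4(x_1-x_3-x_4+(1+\nu)x_5)+(x_4-x_1+x_2)^2$ as (a scalar multiple of) the standard Casimir $ef+fe+\tfrac12 h^2$ of the $\mathfrak{sl}_2$-triple found above, written in the universal enveloping algebra $U(\mathcal A)$; equivalently one checks $[\mathcal C_1,x_j]=0$ in $U(\mathcal A)$ for $j=1,\dots,5$ by a direct (if slightly tedious) computation using the derivation property of $\operatorname{ad}x_j$ and the bracket table. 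The main obstacle is this last verification: matching the somewhat asymmetric expression for $\mathcal{C}_1$ to the symmetric $\mathfrak{sl}_2$-Casimir requires being careful about operator ordering in $U(\mathcal A)$ and about the precise normalization of the triple, and one must confirm the $(1+\nu)x_5$ terms cancel correctly — everything else is bookkeeping with the table from Lemma~\ref{bracket D x }.
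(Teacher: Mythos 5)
Your treatment of the structural claims is correct and follows essentially the same route as the paper: you determine the center (by solving $[v,x_j]=0$ directly, where the paper instead computes the radical via the Killing form --- an immaterial difference), compute $[\mathcal{A},\mathcal{A}]$ from the bracket table, exhibit an $\mathfrak{sl}_2$-triple, and conclude $\mathcal{A}=[\mathcal{A},\mathcal{A}]\oplus\mathcal{Z}_{\mathcal{A}}$ with $[\mathcal{A},\mathcal{A}]\cong\mathfrak{sl}(2,\mathbb{C})$. After the sign fix you already anticipate (``up to normalization''), the correct triple is $e=x_4$, $h=x_4-x_1+x_2$, $f=x_1-x_3-x_4+(1+\nu)x_5$, which satisfies $[h,e]=2e$, $[h,f]=-2f$, $[e,f]=h$; this part, together with the centrality of $\mathcal{C}_2$ and $\mathcal{C}_3$, is sound.

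The genuine gap is exactly at the step you defer to ``bookkeeping'': the identification of $\mathcal{C}_1$ with the $\mathfrak{sl}_2$-Casimir does not close. For the triple above the quadratic Casimir is $ef+fe+\tfrac12h^2=2ef-h+\tfrac12h^2$, i.e.\ after scaling $4x_4(x_1-x_3-x_4+(1+\nu)x_5)+(x_4-x_1+x_2)^2-2(x_4-x_1+x_2)$, which differs from the stated $\mathcal{C}_1$ both in the \emph{sign} of the quadratic cross term and by the non-central linear term $-2h$; the sign discrepancy is fatal, since no linear correction can repair it. Indeed, a direct computation in $U(\mathcal{A})$ from the table gives $[f,x_4]=-h$, $[h,x_4]=2x_4$, hence $[\mathcal{C}_1,x_4]=8x_4(x_4-x_1+x_2)+4x_4\neq0$, so $\mathcal{C}_1$ as written is not a Casimir element. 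Carrying out the check you describe would therefore not confirm the statement but force a correction of the formula for $\mathcal{C}_1$ to the symmetrized expression above. (The paper's own proof has the same defect: it asserts that $4e_1e_3+e_2^2$ is the $\mathfrak{sl}_2$-Casimir without the symmetrization, and its bracket table for $a_1,a_2,a_3$ contains sign errors --- e.g.\ $[x_4,x_1-x_2]=2x_4$, not $-2x_4$, and $[a_2,a_3]=a_1-a_2+2a_3$, not $-a_3$.)
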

	\begin{proof}
		From Lie algebra theory (see \cite{knapp1996lie}), the radical of $\mathcal{A}$ can be computed from its Killing form, in this case we have that 
		$$\mathrm{Rad}(\mathcal{A})=\mathcal{Z}_{\mathcal{A}}=\langle x_{1}+x_2- x_4,x_5\rangle,$$
		and so, the Lie algebra $\mathcal{A}$ is reductive. 
		Now, from general theory, since $\mathcal{A}$ is reductive, we obtain
		$$\mathcal{A}=[\mathcal{A},\mathcal{A}] \oplus \mathcal{Z}_{\mathcal{A}}.$$
		In this case, we obtain that
		$$[\mathcal{A},\mathcal{A}]= \langle  x_4, \, x_1 -x_2,\,  x_1-x_3 -x_4 +(1+\nu) x_5\rangle.$$
		By taking $ a_1=x_4$, $a_2= x_1-x_2$ and $a_3=x_1-x_3-x_4+(1+\nu)x_5$ we obtain that 
		$$[a_1,a_2]=-2a_1, \quad [a_1,a_3]=a_1-a_2, \quad [a_2,a_3]=-a_3$$
		and so if we take $\hat{a}_2=a_1-a_2$ and $\hat{a}_3=-a_3$ we have 
		$$[a_1,\hat{a}_2]=2a_1, \quad [a_1,\hat{a}_3]=-\hat{a}_2, \quad [\hat{a}_2,\hat{a}_3]=2\hat{a}_3,$$
		and so $[\mathcal{A},\mathcal{A}] $ is isomorphic to $\mathrm{SL}(2,\mathbb{C})$ by consider the map $a_1\mapsto e_1$ $ \hat{a}_2 \mapsto e_2$ and $\hat{a}_3 \mapsto e_3$.
		In particular, the Casimir element of $\mathrm{SL}(2,\mathbb{C})$ given by $4e_1e_3+e_{2}^2$ induces a Casimir element of $[\mathcal{A},\mathcal{A}]$
		$$\mathcal{C}_{[\mathcal{A},\mathcal{A}]}= -4x_4(x_1-x_3-x_4+(1+\nu)x_5)+(x_4-x_1+x_2)^2.$$
		By taking into account that $\mathcal{C}_{[\mathcal{A},\mathcal{A}]}$ commutes with the central elements of $\mathcal{A}$, we obtain that $\mathcal{C}_{[\mathcal{A},\mathcal{A}]}$ commutes with all of the elements of  $\mathcal{A}$ and so is a Casimir element of $\mathcal{A}$. 
		
		The last assertion is clear, since the central elements always are Casimir elements of a given Lie algebra.
	\end{proof}

	\begin{rmk}
		Notice that 
		$$\mathcal{C}=\mathcal{C}_2+ (1+\nu )\mathcal{C}_3 = \mathcal{D}+\mathcal{D}^{\dagger}+x+(1+\nu) $$ 
		it is also a Casimir invariant of $\mathcal{A}$. 
		Hence, under the representation given by $\mathcal{D},\mathcal{D}^{\dagger}, D,x,1$, 
		the image of this Casimir satisfies
		$$\mathcal{C}= Ax-J.$$
		Thus, in terms of $M,M^{\dagger}$ and $L$, we obtain that
		\begin{equation}\label{Casimdisc.}
			\varphi^{-1}(\mathcal{C})=M+M^{\dagger}+L+(1+\nu).
		\end{equation}
		On the other hand, it can be check that
		$$\mathcal{C}_{[\mathcal{A},\mathcal{A}]}=\dfrac{1}{8}(A^{2}x^{2}+\nu^{2}+J^{2}+Ax-2\nu Ax-2xJA+2\nu J -1).$$
		Thus, $\mathcal{C}'=A^{2}x^{2}-2xJA+J+J^{2}$ it is also a Casimir since
		$$\mathcal{C}'=8\mathcal{C}_{[\mathcal{A},\mathcal{A}]}-(1-2\nu)\mathcal{C} - (\nu^2-1)\mathcal{C}_3,$$
		moreover, notice that the relation $[J,A]=A$ implies that
		$\mathcal{C}'=\mathcal{C}^2-\mathcal{C}$, and so we obtain that
		$$\mathcal{C}_{[\mathcal{A},\mathcal{A}]}=\tfrac{1}{8}\Big( \mathcal{C}^2-2\nu \mathcal{C} + (\nu^2-1)\mathcal{C}_3\Big).$$
		Therefore, in this representation the Casimir element corresponding to $\mathcal{C}_{[\mathcal{A},\mathcal{A}]}$ does not give more information than $\mathcal{C}$.
		Hence, in the rest of the paper we will only consider the Casimir element $\mathcal{C}$.
	\end{rmk}
	
	We can also consider the Lie subalgebra generated by $\{x_1,x_2,x_4,x_5\}$. 
	Notice that a representation of this kind of algebra was consider in the above section by taking $\phi(x)=x$.
	In this case, we have the following structure result.

	\begin{prop}
		The Lie subalgebra $\mathcal{A}' = \langle x_1,x_2,x_4,x_5 \rangle $ of $\mathcal{A}$ is isomorphic to $\mathfrak{g}_2\oplus \mathbb{C}^2 $ where $\mathfrak{g}_2$ is the 2-dimensional solvable Lie algebra with bracket $[e_1,e_2]=e_2$.
		In particular, $\mathcal{A}'$ has no non-central Casimir invariants. 
	\end{prop}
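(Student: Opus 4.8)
The plan is to present $\mathcal{A}'$ as the Lie algebra direct sum of its centre with an explicit copy of $\mathfrak{g}_2$, and then to read off the Casimir invariants from this splitting. The first step is to compute the centre of $\mathcal{A}'$ directly. Writing a general element $a x_1 + b x_2 + c x_4 + d x_5$ and requiring it to bracket to zero against $x_1$, $x_2$ and $x_4$, the relations $[x_1,x_2]=[x_1,x_4]=-x_4$ and $[x_2,x_4]=x_4$ reduce to the linear conditions $b+c=0$, $a+c=0$ and $a=b$; hence $\mathcal{Z}_{\mathcal{A}'}=\langle x_1+x_2-x_4,\ x_5\rangle$ has dimension two. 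Put $w=x_1+x_2-x_4$ and $z=x_5$.

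Next I would exhibit the complement. Since $[x_1,x_4]=-x_4$, the span $\langle x_1,x_4\rangle$ is a subalgebra, and the assignment $e_1\mapsto -x_1$, $e_2\mapsto x_4$ is a Lie algebra isomorphism $\mathfrak{g}_2\to\langle x_1,x_4\rangle$, because $[-x_1,x_4]=x_4$. As $x_2=w-x_1+x_4$, the set $\{x_1,x_4,w,z\}$ is a basis of $\mathcal{A}'$, so $\mathcal{A}'=\langle x_1,x_4\rangle\oplus\langle w,z\rangle$ as vector spaces; and since $w$ and $z$ are central, $[\langle x_1,x_4\rangle,\langle w,z\rangle]=0$. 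Therefore $\mathcal{A}'\cong\mathfrak{g}_2\oplus\mathbb{C}^2$, with the abelian summand equal to $\mathcal{Z}_{\mathcal{A}'}$.

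For the last assertion, the splitting gives $U(\mathcal{A}')\cong U(\mathfrak{g}_2)\otimes\mathbb{C}[w,z]$ with $w,z$ central, so $Z(U(\mathcal{A}'))=Z(U(\mathfrak{g}_2))[w,z]$ and it suffices to show $Z(U(\mathfrak{g}_2))=\mathbb{C}$. Here I would grade $U(\mathfrak{g}_2)$ by $\operatorname{ad}(e_1)$: by the Poincar\'e--Birkhoff--Witt theorem the monomials $e_1^i e_2^j$ form a basis and $\operatorname{ad}(e_1)(e_1^i e_2^j)=j\,e_1^i e_2^j$, so any central element lies in the weight-zero component $\mathbb{C}[e_1]$. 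For $p(e_1)\in\mathbb{C}[e_1]$ the identity $e_1^i e_2=e_2(e_1+1)^i$ gives $p(e_1)e_2=e_2\,p(e_1+1)$, whence $[p(e_1),e_2]=e_2\bigl(p(e_1+1)-p(e_1)\bigr)$; since $U(\mathfrak{g}_2)$ is an integral domain, this vanishes only when $p(X+1)=p(X)$, i.e.\ $p$ is constant. Consequently every Casimir invariant of $\mathcal{A}'$ is a polynomial in $w$ and $z$, so $\mathcal{A}'$ has no non-central Casimir invariants.

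The whole argument is elementary; the only step needing a little attention is the final one, where one must invoke that enveloping algebras are integral domains in order to cancel $e_2$ and conclude $p(X+1)=p(X)$.
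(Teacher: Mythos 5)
Your proof is correct, and its structural part coincides with the paper's: both split $\mathcal{A}'$ as the two-dimensional centre $\langle x_1+x_2-x_4,\,x_5\rangle$ plus a copy of $\mathfrak{g}_2$ (the paper takes $x_2\mapsto e_1$, $x_4\mapsto e_2$; your choice $-x_1\mapsto e_1$, $x_4\mapsto e_2$ differs from theirs only by a central element, so it is the same decomposition). Where you genuinely diverge is the final claim. The paper disposes of it in one line, saying the absence of non-central Casimirs "is a consequence of $\mathfrak{g}_2$ has not central elements"; as stated that implication is not valid in general (e.g.\ $\mathfrak{sl}_2(\mathbb{C})$ has trivial centre but a nontrivial Casimir), so the paper is really only asserting the fact. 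You instead prove it: reducing to $Z(U(\mathfrak{g}_2))=\mathbb{C}$ via the tensor decomposition of the enveloping algebra, locating any central element in the $\operatorname{ad}(e_1)$-weight-zero component $\mathbb{C}[e_1]$ by PBW, and then using $p(e_1)e_2=e_2\,p(e_1+1)$ together with the fact that $U(\mathfrak{g}_2)$ is a domain to force $p$ constant. This costs a few extra lines but turns an assertion into an argument, and is the right way to justify the statement; the rest of your write-up (the computation of the centre and the verification that $\langle x_1,x_4\rangle$ is an ideal complementing it) is routine and matches the paper.
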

	
	\begin{proof}
		By taking the map 
		$$x_2\mapsto e_1, \quad x_4 \mapsto e_2, \quad x_1+x_2-x_4\mapsto e_3 \quad \text{and } \quad x_5 \mapsto e_4.$$
		we obtain an isomorphic algebra of $\mathcal{A}'$, in this case the only non-vanishing bracket of $\langle e_1,e_2,e_3,e_4\rangle$ is the bracket 
		$[e_1,e_2]=e_2$ and so $\mathcal{A}'$ is isomorphic to $\mathfrak{g}_2\oplus \mathbb{C}^2$, as asserted. 
		
		The last assertion is a consequence of $\mathfrak{g}_2$ has not central elements.
		Hence, the only Casimir invariants of $\mathcal{A}'$ are the central elements. 
	\end{proof}
	
	In the sequel, in order to simplify the notation, we will consider 
	\begin{equation}\label{BCH}
		B_{n}:=B(n),\quad C_{n}:=C(n), \quad \mathcal{H}_{n}:=\mathcal{H}(n), \quad \Gamma_n= \Gamma(n).    
	\end{equation}
	The following proposition is a consequence, of the relations between the brackets of $M,M^{\dagger},L,\Gamma$. 
	
	\begin{prop}\label{prop infinitas eq}
		Let $A,J\in M_{N}(\mathbb{C})$ be matrices as in \eqref{A J} and let $B_n,C_n,\mathcal{H}_n$ and $\Gamma_n$ be as in \eqref{BCH}. 
		Then,
		\begin{equation}\label{ML.1}
			B_n (A-1) - (A-1) B_{n+1} = 2 + \mathcal{H}_{n+1} J \mathcal{H}_{n+1}^{-1} - \mathcal{H}_n J \mathcal{H}_n^{-1},
		\end{equation}
		\begin{equation}\label{MdL0}
			B_n=[B_n,J] + \mathcal{H}_n (A^{t}-1) \mathcal{H}_{n-1}^{-1} - \mathcal{H}_{n+1} (A^{t}-1) \mathcal{H}_n^{-1},
		\end{equation}
		\begin{equation}\label{MdL-1}
			2 \mathcal{H}_n \mathcal{H}_{n-1}^{-1}= [\mathcal{H}_n \mathcal{H}_{n-1}^{-1},J] - B_n \mathcal{H}_n (A^{t}-1) \mathcal{H}_{n-1}^{-1} - \mathcal{H}_n (A^{t}-1) \mathcal{H}_{n-1}^{-1} B_{n-1},
		\end{equation}
		\begin{equation}\label{MMd0}
			B_n = -[J,\mathcal{H}_n J \mathcal{H}_n^{-1}] - \mathcal{H}_n (A^{t}-1) \mathcal{H}_{n-1}^{-1} (A-1) + (A-1) \mathcal{H}_{n+1} (A^{t}-1) \mathcal{H}_n^{-1},
		\end{equation}
		\begin{equation}\label{GamaL-1}
			[\Gamma, C_{n}\delta^{-1}] = \mathcal{H}_{n}(A-1)^*\mathcal{H}_{n-1}^{-1},
		\end{equation}
		\begin{equation}\label{GamaM0}
			[\Gamma_n,\mathcal{H}_{n}J \mathcal{H}_n^{-1}]= n+\Gamma_{n}+\mathcal{H}_{n}J \mathcal{H}_{n}^{-1},
		\end{equation}
		\begin{equation}\label{GamaMdag-1}
			[\Gamma,\mathcal{H}_{n}(A-1)^*\mathcal{H}_{n-1}^{-1} \delta^{-1}]=-\mathcal{H}_{n}(A-1)^*\mathcal{H}_{n-1}^{-1}. 
		\end{equation}
	\end{prop}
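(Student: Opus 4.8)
The plan is to exploit the generalized Fourier map $\varphi\colon\mathcal{F}_L(P)\to\mathcal{F}_R(P)$ from Section~\ref{sec:pre}, which is an algebra isomorphism and therefore intertwines Lie brackets: $\varphi^{-1}([\D_1,\D_2])=[\varphi^{-1}(\D_1),\varphi^{-1}(\D_2)]$. First I would record how $\varphi$ acts on the four generators occurring in Lemma~\ref{bracket D x }: by Remark~\ref{rmk:three-term-Fourier} one has $\varphi(L)=x$, by Proposition~\ref{prop cal D y D dag} one has $\varphi(M)=\mathcal{D}$ and $\varphi(M^{\dagger})=\mathcal{D}^{\dagger}$, and by Proposition~\ref{Prop Gama} one has $\varphi(\Gamma)=D$ with $\Gamma(n)=A(n+\nu+1+J)-n-J$. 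Applying $\varphi^{-1}$ to the six bracket relations of Lemma~\ref{bracket D x } then converts them into the discrete identities
$$[M,L]=-L,\quad [M^{\dagger},L]=L,\quad [M,M^{\dagger}]=L,\quad [\Gamma,L]=-M+M^{\dagger},$$
$$[M,\Gamma]=-M+\Gamma-(1+\nu),\quad [M^{\dagger},\Gamma]=M^{\dagger}-\Gamma+(1+\nu),$$
which already carry all the information we need.

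Next I would substitute the normal forms $L=\delta+B_n+C_n\delta^{-1}$, $M=(A-1)\delta-(n+1+\nu)-\mathcal{H}_n J\mathcal{H}_n^{-1}$, $M^{\dagger}=-(n+\nu+J+1)+\mathcal{H}_n(A-1)^{\ast}\mathcal{H}_{n-1}^{-1}\delta^{-1}$ and $\Gamma=\Gamma(n)$ into these identities, reorder everything using $\delta^{j}F(n)=F(n+j)\delta^{j}$, and equate the coefficients of the individual powers of $\delta$. The extreme powers of $\delta$ in each commutator cancel identically (for instance the $\delta^{2}$-term of $[M,L]$ and the $\delta^{-2}$-term of $[M^{\dagger},L]$), which doubles as a sign check. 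The remaining coefficients are exactly the seven stated identities: \eqref{ML.1} is the $\delta^{1}$-coefficient of $[M,L]=-L$; \eqref{MdL0} and \eqref{MdL-1} are the $\delta^{0}$- and $\delta^{-1}$-coefficients of $[M^{\dagger},L]=L$; \eqref{MMd0} is the $\delta^{0}$-coefficient of $[M,M^{\dagger}]=L$; \eqref{GamaL-1} is the $\delta^{-1}$-coefficient of $[\Gamma,L]=-M+M^{\dagger}$ (only $M^{\dagger}$ contributes in degree $-1$); \eqref{GamaM0} is the $\delta^{0}$-coefficient of $[M,\Gamma]=-M+\Gamma-(1+\nu)$; and \eqref{GamaMdag-1} is the $\delta^{-1}$-coefficient of $[M^{\dagger},\Gamma]=M^{\dagger}-\Gamma+(1+\nu)$.

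As a sample of the last step: writing $m_0(n)=-(n+1+\nu)-\mathcal{H}_n J\mathcal{H}_n^{-1}$ so that $M=(A-1)\delta+m_0(n)$, the $\delta^{1}$-coefficient of $ML-LM+L$ is $(A-1)B_{n+1}+m_0(n)-m_0(n+1)-B_n(A-1)+1$; since $m_0(n)-m_0(n+1)=1+\mathcal{H}_{n+1}J\mathcal{H}_{n+1}^{-1}-\mathcal{H}_n J\mathcal{H}_n^{-1}$, setting this coefficient to zero yields precisely \eqref{ML.1}, and every other case runs along the same lines. I do not expect a genuine obstacle here: the only thing that requires care is the bookkeeping of the index shifts $n\mapsto n\pm1$ generated by moving coefficient matrices past powers of $\delta$, together with keeping $M$, $M^{\dagger}$ and $\Gamma$ in the strict form $\sum_j A_j(n)\delta^j$ before coefficients are compared.
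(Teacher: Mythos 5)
Your proposal is correct and follows essentially the same route as the paper's own proof: push the bracket relations of Lemma~\ref{bracket D x } through the generalized Fourier map to get identities among $M$, $M^{\dagger}$, $L$, $\Gamma$ in $\mathcal{F}_L(P)$, then compare coefficients of the indicated powers of $\delta$ after normal-ordering with $\delta^{j}F(n)=F(n+j)\delta^{j}$. Your signs are in fact the internally consistent ones (e.g.\ $[M,L]=-L$, matching $[\mathcal{D},x]=-x$ and reproducing \eqref{ML.1} exactly in your sample computation, whereas the paper's proof writes $[M,L]=L$), so no gap here.
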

	\begin{proof}
		The equation \eqref{ML.1} is consequence of seeing the coefficient of $\delta^{1}$ in the bracket 
		relation $[M,L]=L$. 
		
		In the same way,  the equations \eqref{MdL0}, \eqref{MdL-1} are consequence of seeing the coefficients of $\delta^{0}$ and $\delta^{-1}$ in the bracket relation $[M^\dagger, L] = L$.
		
		On the other hand, the equation \eqref{MMd0} it follows from the bracket relation $[M,M^\dagger] = L$.
		The equation \eqref{GamaL-1} is a consequence of the coefficient of $\delta^{-1}$ in the bracket $[\Gamma,L] = -M+M^{\dagger}$.
		The equation \eqref{GamaM0} is obtained from the coefficient of $\delta^{0}$ in the bracket $[\Gamma,M] = M-\Gamma+(1+\nu)$.
		Finally, the equation \eqref{GamaMdag-1} can be obtained from the coefficient of $\delta^{-1}$ in the bracket relation
		$[\Gamma,M^{\dagger}] = -M^{\dagger}+\Gamma-(1+\nu)$.
	\end{proof}
	
	\medskip
	
	\begin{prop} 
		Let $A,J\in M_{N}(\mathbb{C})$ be matrices as in \eqref{A J}.
		Then,
		\begin{align*}
			[B_n,J] &=  B_n  - \overline{\Gamma_n} \overline{C_n} + \overline{C_n \Gamma_{n-1}} + \overline{\Gamma_{n+1} C_{n+1}} - \overline{C_{n+1} \Gamma_n}, \\
			[C_n,J]&= 2C_n+B_n (\overline{\Gamma_n} \overline{C_n} - \overline{C_n \Gamma_{n-1}}) + (\overline{\Gamma_n} \overline{C_n} - \overline{C_n \Gamma_{n-1}}) B_{n-1},
		\end{align*}
		where $B_n,C_n,\mathcal{H}_n$ and $\Gamma_n$ be as in \eqref{BCH}
	\end{prop}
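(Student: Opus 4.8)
The plan is to read both identities directly off the structural relations already established in Proposition~\ref{prop infinitas eq}, together with the explicit form of $\Gamma_n$ from Proposition~\ref{Prop Gama}. What makes this short is that the two commutators in question appear, essentially isolated, on the left-hand sides of \eqref{MdL0} and \eqref{MdL-1}: indeed, by \eqref{B C} we have $C_n=\mathcal{H}_n\mathcal{H}_{n-1}^{-1}$, so $[C_n,J]=[\mathcal{H}_n\mathcal{H}_{n-1}^{-1},J]$. Thus the whole task reduces to re-expressing the blocks $\mathcal{H}_n(A^{t}-1)\mathcal{H}_{n-1}^{-1}$ (and its shift by one) in terms of $\Gamma_n$ and $C_n$.

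First I would solve \eqref{MdL0} for $[B_n,J]$ and \eqref{MdL-1} for $[C_n,J]$, obtaining
\[
[B_n,J]=B_n-\mathcal{H}_n(A^{t}-1)\mathcal{H}_{n-1}^{-1}+\mathcal{H}_{n+1}(A^{t}-1)\mathcal{H}_n^{-1},
\]
\[
[C_n,J]=2C_n+B_n\,\mathcal{H}_n(A^{t}-1)\mathcal{H}_{n-1}^{-1}+\mathcal{H}_n(A^{t}-1)\mathcal{H}_{n-1}^{-1}\,B_{n-1}.
\]
The key input is \eqref{GamaL-1}: comparing the coefficient of $\delta^{-1}$ on the two sides of $[\Gamma,L]=-M+M^{\dagger}$ gives $\Gamma_nC_n-C_n\Gamma_{n-1}=\mathcal{H}_n(A-1)^{*}\mathcal{H}_{n-1}^{-1}$, and since $A$ is real $(A-1)^{*}=A^{t}-1$. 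Substituting this — at level $n$, and for the first identity also at level $n+1$ (i.e.\ $\mathcal{H}_{n+1}(A^{t}-1)\mathcal{H}_n^{-1}=\Gamma_{n+1}C_{n+1}-C_{n+1}\Gamma_n$) — into the two displays above produces the two formulas claimed, with the four $\mathcal{H}$-conjugated blocks replaced by $\Gamma_nC_n$, $C_n\Gamma_{n-1}$, $\Gamma_{n+1}C_{n+1}$, $C_{n+1}\Gamma_n$.

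It then remains to match the bare products $\Gamma_nC_n$, $C_n\Gamma_{n-1}$, etc., with the overbarred expressions in the statement. The overbar denotes the $W$-adjoint ($\dagger$) operation of Section~\ref{sec:pre} applied to a matrix coefficient, and the content is that the relevant products are fixed by it. This is where I expect the only (mild) obstacle: one has to keep careful track of which squared norm $\mathcal{H}_m$ conjugates each block, and then invoke the $W$-symmetry of $D$, hence of $\Gamma$ (so that $\mathcal{H}_m\Gamma_m^{*}\mathcal{H}_m^{-1}=\Gamma_m$ for every $m$), together with $C_n=\mathcal{H}_n\mathcal{H}_{n-1}^{-1}$, to check that $\overline{\Gamma_n}\,\overline{C_n}=\Gamma_nC_n$, $\overline{C_n\Gamma_{n-1}}=C_n\Gamma_{n-1}$, and the analogous identities at level $n+1$. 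Once that bookkeeping is carried out, the two displays above are literally the asserted identities, and no further computation is needed; the algebra itself is entirely routine, being a rearrangement of equations already in hand.
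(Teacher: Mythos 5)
Your proposal is correct and follows essentially the same route as the paper: solve \eqref{MdL0} and \eqref{MdL1} (i.e.\ \eqref{MdL-1}) for the two commutators and then use the $\delta^{-1}$-coefficient of $[\Gamma,L]=-M+M^{\dagger}$, namely \eqref{GamaL-1}, to rewrite $\mathcal{H}_n(A^{t}-1)\mathcal{H}_{n-1}^{-1}$ as $\Gamma_nC_n-C_n\Gamma_{n-1}$ at levels $n$ and $n+1$. The only small discrepancy is your reading of the overbar: in the paper it is entrywise complex conjugation (applied to \eqref{GamaL-1} to turn $(A-1)^{*}$ into $A^{t}-1$, using $\mathcal{H}_n\in M_N(\mathbb{R})$), not the $\dagger$-operation, though since $A$, $J$, $\mathcal{H}_n$, $C_n$, $\Gamma_n$ are all real the overbarred and plain products coincide and your conclusion is unaffected.
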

	\begin{proof}
		By \eqref{MdL0} we have that 
		$$[B_n,J] = B_n - \mathcal{H}_n (A^{t}-1) \mathcal{H}_{n-1}^{-1} + \mathcal{H}_{n+1} (A^{t}-1) \mathcal{H}_n^{-1}.$$
		On the other hand, by \eqref{GamaL-1} and taking into account that $\mathcal{H}_{n}\in M_N(\mathbb{R})$, we obtain that
		\begin{align*}
			\mathcal{H}_{n}(A^t-1)\mathcal{H}_{n-1}^{-1} =  \overline{[\Gamma, C_{n}\delta^{-1}]}&= \overline{\Gamma_n} \overline{C_n} - \overline{C_n \Gamma_{n-1}},\\
			\mathcal{H}_{n+1}(A^t-1)\mathcal{H}_{n}^{-1} = \overline{[\Gamma, C_{n}\delta^{-1}]} &= \overline{\Gamma_{n+1} C_{n+1}} - \overline{C_{n+1} \Gamma_n},
		\end{align*}
		and so we have that
		$$	[B_n,J] =  B_n  - \overline{\Gamma_n} \overline{C_n} + \overline{C_n \Gamma_{n-1}} + \overline{\Gamma_{n+1} C_{n+1}} - \overline{C_{n+1} \Gamma_n},$$
		as asserted.
		
		For the second equality, notice that since $C_{n}=\mathcal{H}_{n}\mathcal{H}_{n-1}^{-1}$, 
		thus the equation \eqref{MdL-1}  implies that 
		$$[C_n,J]=2C_n+B_n \mathcal{H}_n (A^{t}-1) \mathcal{H}_{n-1}^{-1} + \mathcal{H}_n (A^{t}-1) \mathcal{H}_{n-1}^{-1} B_{n-1}.$$
		In the same way as above, we obtain that
		$$[C_n,J]=2C_n+B_n (\overline{\Gamma_n} \overline{C_n} - \overline{C_n \Gamma_{n-1}}) + (\overline{\Gamma_n} \overline{C_n} - \overline{C_n \Gamma_{n-1}}) B_{n-1},$$
		as desired.
	\end{proof}

	\section{Matrix entries of $P(x,n)$ as classical Laguerre polynomials}
	
	In this section we will give explicit expressions of the entries of the Laguerre-type MVOPs in terms of the classical scalar Laguerre polynomials. For this we use the approach of \cite{KdlRR}, \cite{ISMAIL2019235}, \cite{KR}, which consists in observing that a symmetric second order differential operator can be diagonalized via conjugation with an appropriate matrix valued function. The present situation is more involved than the previous cases because, although the differential operator can be diagonalized, the eigenvalue remains non-diagonal.
	
	In the rest of the section, the weight matrix $W(x)$ is as in the previous section.
	\subsection*{Step I: Diagonalizing the differential operator:} 
	Let $A,J\in M_{N}(\mathbb{C})$ be matrices as in \eqref{A J}. We can define the following auxiliary matrix valued polynomials
	\begin{equation}\label{def Q}
		Q(x,n):=P(x,n)L(x), \qquad \text{where $L(x)= e^{xA}$.}
	\end{equation}
	The polynomials $Q_n$ satisfy the following relations,
	$$ Q_n\cdot \mathcal{D}_Q = M \cdot Q_n,\qquad Q_n\cdot D_Q = \Lambda_n \cdot Q_n,\qquad Q_n\cdot \mathcal{C}_Q = M_{\mathcal C} \cdot Q_n,$$
	where 
	\begin{equation}\label{def DQs}
		\mathcal{D}_Q= L(x)^{-1} \mathcal{D} L(x), \qquad D_Q=L(x)^{-1} D L(x), \qquad \mathcal{C}_Q=L(x)^{-1} \mathcal{C} L(x).
	\end{equation}
	Here $\mathcal{D}$, $D$ are as in Proposition \ref{prop cal D y D dag}, Proposition \ref{Prop Gama} respectively and $\mathcal{C}=Ax-J$ . 
	In the following lemma, we show that $\mathcal{D}_Q$, $D_Q$ and $\mathcal{C}_Q$ are simple diagonal operators.
	\begin{lem}\label{lemma CQs}		
		The operators $\mathcal{D}_Q, D_Q$ and $\mathcal{C}_Q$ as in \eqref{def DQs}
		are given explicitly as follows: 
		$$\mathcal{D}_Q=  \partial_x x -x, \qquad 
		D_Q=\partial_{x}^2 x +\partial_x ( 1+\nu-x+J) -J, \qquad \mathcal{C}_Q=-J.$$
	\end{lem}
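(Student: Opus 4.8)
The plan is to reduce everything to two elementary conjugation rules for operators acting on the right, and then simplify term by term. First I would record how conjugation by $L(x)=e^{xA}$ transforms the building blocks of a differential operator. Since $L'(x)=AL(x)=L(x)A$, one has $(L^{-1})'(x)=-AL^{-1}(x)=-L^{-1}(x)A$, so that for any matrix polynomial $Q$ we get $\big((QL^{-1})'\,L\big)(x)=Q'(x)-Q(x)A$; in operator form this is the identity $L(x)^{-1}\partial_x L(x)=\partial_x-A$ (here $A$ denotes right multiplication by $A$). For a zero-order operator given by right multiplication by a matrix function $F(x)$, conjugation simply replaces $F$ by $L^{-1}FL$. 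The only structural facts needed are $L(x)^{-1}AL(x)=A$, because $A$ commutes with $e^{xA}$, and $L(x)^{-1}JL(x)=e^{-xA}Je^{xA}=J+Ax$, which follows from \eqref{corchete A J} (equivalently \eqref{rel L J}) after replacing $x$ by $-x$.

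With these rules in hand the computations are direct. For $\mathcal{C}_Q$: since $\mathcal{C}=Ax-J$ has order zero, $\mathcal{C}_Q=L^{-1}(Ax-J)L=Ax-(J+Ax)=-J$. For $\mathcal{D}_Q$: writing $\mathcal{D}=\partial_x\,x+x(A-1)$ and inserting $L\,L^{-1}$ between $\partial_x$ and the coefficient $x$, the first summand conjugates to $(\partial_x-A)\,x=\partial_x x-Ax$, while the second conjugates to $x(L^{-1}AL-1)=x(A-1)$; adding the two, the $\pm Ax$ terms cancel and $\mathcal{D}_Q=\partial_x x-x$. For $D_Q$ I would conjugate $D=\partial_x^2\,x+\partial_x\big((A-1)x+1+\nu+J\big)+(A\nu+JA-J)$ term by term, using $L^{-1}\partial_x^2 L=(\partial_x-A)^2=\partial_x^2-2\partial_x A+A^2$, together with $L^{-1}\big((A-1)x+1+\nu+J\big)L=(2A-1)x+1+\nu+J$ and $L^{-1}(A\nu+JA-J)L=A\nu+JA+A^2x-J-Ax$. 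Collecting coefficients: the coefficient of $\partial_x^2$ is $x$; the coefficient of $\partial_x$ becomes $-2Ax+(2A-1)x+1+\nu+J=1+\nu-x+J$ once the $2Ax$ terms cancel; and the zero-order coefficient collapses, after the $A^2x$ and $Ax$ terms cancel, to $-A+[J,A]-J=-J$ by $[J,A]=A$. This yields $D_Q=\partial_x^2 x+\partial_x(1+\nu-x+J)-J$, as claimed.

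The only point requiring care is bookkeeping: in the right-action convention an operator $\sum_j\partial_x^j F_j(x)$ means ``differentiate $j$ times, then multiply on the right by $F_j$'', so operator products are read left to right and every matrix coefficient multiplies $Q$ from the right; one must keep track of this when inserting $L\,L^{-1}$ to move the conjugation past $\partial_x$, and when expanding $(\partial_x-A)^2$. There is no genuine obstacle here — the entire content of the lemma is the pair of identities $L^{-1}AL=A$ and $L^{-1}JL=J+Ax$ together with the commutation relation $[J,A]=A$.
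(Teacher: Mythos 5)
Your proposal is correct and follows essentially the same route as the paper, which simply asserts that the identities "follow directly from definition and equation \eqref{corchete A J}"; you have carried out exactly that computation, using $L^{-1}\partial_x L=\partial_x-A$, $L^{-1}AL=A$ and $L^{-1}JL=J+Ax$, and your term-by-term bookkeeping (including the cancellation of the $A^2x$ and $Ax$ terms and the final reduction $-A+[J,A]-J=-J$ via $[J,A]=A$) checks out.
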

	
	\begin{proof}
		All of the equalities are follow directly from definition and the equation \eqref{corchete A J}.
	\end{proof}
	
	In the following proposition, we will use the expressions of $\mathcal{C}_Q$, $\mathcal{C}$ and $\mathcal{D}$ 
	in order to obtain an equation which relates $Q(x,n)$ and the recurrent matrix $H_n(A^{\ast}-1)H_{n-1}^{-1}$.
	
	\begin{prop}\label{prop Q's}
		The auxiliary functions $Q(x,n)$'s defined in \eqref{def Q}, 
		satisfy the following equation which depend on the squared norms:
		{\small \begin{equation*}
				-Q(x,n)J=xQ'(x,n)- (n+J)Q(x,n)+\mathcal{H}(n)(A-1)^\ast\mathcal{H}^{-1}(n-1)Q(x,n-1) \quad 	\text{for $n\ge 1$}
		\end{equation*}}
		and for $n=0$
		\begin{equation*}
			-Q(x,0)J=-J-xQ'(x,0).
		\end{equation*}
	\end{prop}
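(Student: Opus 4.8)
The plan is to transport the identity $P\cdot\mathcal{D}^{\dagger}=M^{\dagger}\cdot P$ of Proposition~\ref{prop cal D y D dag} to the conjugated functions $Q(x,n)=P(x,n)L(x)$ and then to read off the $\delta^{0}$ and $\delta^{-1}$ parts. First I would compute the conjugate $\mathcal{D}^{\dagger}_{Q}:=L^{-1}\mathcal{D}^{\dagger}L$. Specializing \eqref{D-Ddaga eqn} to $\phi(x)=x$ gives $\mathcal{D}^{\dagger}=-\mathcal{D}+\mathcal{C}-(1+\nu)-x$ with $\mathcal{C}=Ax-J$, and conjugating term by term, using that $x$ and the scalar $1+\nu$ are unaffected and that $\mathcal{D}_{Q}=\partial_{x}x-x$, $\mathcal{C}_{Q}=-J$ by Lemma~\ref{lemma CQs}, one obtains
\[
\mathcal{D}^{\dagger}_{Q}=-\mathcal{D}_{Q}+\mathcal{C}_{Q}-(1+\nu)-x=-\partial_{x}x-(1+\nu+J).
\]

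Next, since $L(x)$ is independent of $n$, right multiplication by $L(x)$ converts $P\cdot\mathcal{D}^{\dagger}=M^{\dagger}\cdot P$ into $Q\cdot\mathcal{D}^{\dagger}_{Q}=M^{\dagger}\cdot Q$. I would then expand both sides using the explicit data of Proposition~\ref{prop cal D y D dag}: on the one hand,
\[
(Q\cdot\mathcal{D}^{\dagger}_{Q})(x,n)=-xQ'(x,n)-(1+\nu)Q(x,n)-Q(x,n)J,
\]
and on the other hand, from $M^{\dagger}=-(n+\nu+J+1)+\mathcal{H}(n)(A-1)^{\ast}\mathcal{H}^{-1}(n-1)\,\delta^{-1}$,
\[
(M^{\dagger}\cdot Q)(x,n)=-(n+\nu+1)Q(x,n)-JQ(x,n)+\mathcal{H}(n)(A-1)^{\ast}\mathcal{H}^{-1}(n-1)Q(x,n-1).
\]
Equating these and using $-(n+\nu+1)+(1+\nu)=-n$ gives, for $n\geq1$, precisely
\[
-Q(x,n)J=xQ'(x,n)-(n+J)Q(x,n)+\mathcal{H}(n)(A-1)^{\ast}\mathcal{H}^{-1}(n-1)Q(x,n-1),
\]
which is the claimed relation.

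For $n=0$ the same identity $Q\cdot\mathcal{D}^{\dagger}_{Q}=M^{\dagger}\cdot Q$ applies, but now $Q(x,-1)=0$ so the $\delta^{-1}$ term disappears; inserting $Q(x,0)=P(x,0)L(x)=L(x)$, $Q'(x,0)=AL(x)$ and the relation $L(x)JL(x)^{-1}=J-Ax$ from \eqref{rel L J} then recovers the $n=0$ formula.

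I do not expect a genuine difficulty here: the argument is purely formal, and the only step requiring care is the bookkeeping — keeping the first-order and zeroth-order pieces of $\mathcal{D}^{\dagger}$ separate while conjugating by $L$, and, when writing out $M^{\dagger}\cdot Q$ against $Q\cdot\mathcal{D}^{\dagger}_{Q}$, not confusing the left multiplications produced by the discrete operator $M^{\dagger}$ with the right multiplications produced by the differential operator $\mathcal{D}^{\dagger}_{Q}$, since $J$ and $Q(x,n)$ do not commute.
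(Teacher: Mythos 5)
Your proof is correct, and it reaches the identity by a genuinely shorter route than the paper's. The paper's own argument starts from the symmetric operator $\mathcal{C}=Ax-J$: it writes $-Q(x,n)J=Q\cdot\mathcal{C}_Q=(M_{\mathcal C}\cdot P)e^{xA}$ and then, using the explicit coefficients of $M_{\mathcal C}$ from Lemma \ref{AJ lemma}, the relation \eqref{fla A-1n}, the recurrence \eqref{prop Y} and finally \eqref{fla Ad-1n}, rewrites $M_{\mathcal C}$ as $M_{\mathcal D}+L-(n+J)+\mathcal H(n)(A-1)^\ast\mathcal H(n-1)^{-1}\delta^{-1}$, identifying $((M_{\mathcal D}+L)\cdot P)e^{xA}$ with $xQ'$. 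You instead conjugate the single relation $P\cdot\mathcal D^\dagger=M^\dagger\cdot P$ of Proposition \ref{prop cal D y D dag} by $L(x)$, using $e^{-xA}Je^{xA}=J+Ax$ to get $(P\cdot\mathcal D^\dagger)L=-xQ'-(1+\nu)Q-QJ$, and read off the result. The two computations carry the same information --- by the Casimir relation $\mathcal C=\mathcal D+\mathcal D^\dagger+x+(1+\nu)$ one has $M_{\mathcal C}=M+M^\dagger+L+(1+\nu)$, so the $\delta^{-1}$ coefficient the paper extracts laboriously from $M_{\mathcal C}$ is exactly the $\delta^{-1}$ coefficient of $M^\dagger$ that you invoke --- but your route avoids all the bookkeeping with $X(n)$ and $Y(n)$, at the price of taking the explicit form of $M^\dagger$ as given; that is legitimate, since Proposition \ref{prop cal D y D dag} precedes this statement. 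Your handling of $n=0$ also matches the paper's, which likewise reduces that case to \eqref{rel L J} (note the displayed $n=0$ formula in the statement, $-Q(x,0)J=-J-xQ'(x,0)$, is an equivalent rearrangement of the specialization $-Q(x,0)J=xQ'(x,0)-JQ(x,0)$ of the general identity, again via \eqref{rel L J}).
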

	\begin{proof}
		By Lemmas \ref{AJ lemma} and \ref{lemma CQs} we have that 	
		$$			-Q(x,n)J = Q(x,n)\cdot \mathcal{C}_Q = (M_{\mathcal{C}}\cdot P(x,n))e^{xA}$$
		where $M_{\mathcal{C}}$ is the discrete operator
		{\small $$M_{\mathcal{C}}=A \delta + X(n)A-AX(n+1)-J + \Big(Y(n)A-AY(n+1) + [J,X(n)] + (AX(n+1)-X(n)A)X(n)\Big)\delta^{-1}.$$}		
		By equation \eqref{fla A-1n}, we have
		{\small $$ (n-1)X(n)+Y(n)(A-1)-(A-1)Y(n+1)-\Big( n+X(n)A - A X(n+1)-B(n)\Big)X(n) = 0,$$ }
		and so 
		{\small $$Y(n)A-AY(n+1)  + (AX(n+1)-X(n)A)X(n)= X(n)+Y(n)-Y(n+1)-B(n)X(n).$$}
		By taking into account the relation $P\cdot x =L\cdot P$ with $L=\delta+B(n)+C(n)\delta^{-1}$, 
		thus we have that
		$$Y(n)=Y(n+1)+B(n)X(n)+C(n).$$
		Hence, we obtain that
		{\small	$$(M_{\mathcal{C}}\cdot P(x,n))e^{xA}= AQ(x,n+1) + (X(n)A-AX(n+1)-J)Q(x,n)+ (C(n) + [J,X(n)] + X(n))Q(x,n-1).$$}
		On the other hand, by Corollary \ref{coro v grado 1} we have that
		{\small \begin{eqnarray*}
				(M_{\mathcal{D}}\cdot P(x,n))e^{xA} &=& (A-1) Q(x,n+1)+ \Big(n+X(n)A - A X(n+1)-B(n)\Big)Q(x,n), \\
				(L \cdot P(x,n))e^{xA} &=& Q(x,n+1) + B(n)Q(x,n)+ C(n) Q(x,n-1),
		\end{eqnarray*}}
		where $B(n)$ and $C(n)$ are as in equation \eqref{B C}. Hence, we have that
		$$((M_{\mathcal{D}}+L)\cdot P(x,n))e^{xA}= A Q(x,n+1)+(n+X(n)A - A X(n+1))Q(x,n)+C(n)Q(x,n-1)$$
		and thus
		{\small	$$ (M_{\mathcal{C}}\cdot P(x,n))e^{xA}= ((M_{\mathcal{D}}+L)\cdot P(x,n))e^{xA}-(n+J)Q(x,n)+ (X(n)+[J,X(n)])Q(x,n-1).$$}
		Notice that if we denote $L(x)=e^{xA}$, the Lemma \ref{lemma CQs} implies that
		{\small$$((M_{\mathcal{D}}+L)\cdot P(x,n))L(x)=(P(x,n)\cdot(\mathcal{D}+x))L(x)= Q(x,n)\cdot L^{-1}(x)(\mathcal{D}+x))L(x)= Q(x,n)\cdot \partial_{x}x,$$}
		and by \eqref{fla Ad-1n}, $X(n)+[J,X(n)]=\mathcal{H}(n)(A-1)^\ast \mathcal{H}^{-1}(n-1)$. 
		Thus, we obtain that
		{\small $$-Q(x,n)J= xQ'(x,n)- (n+J)Q(x,n)+\mathcal{H}(n)(A-1)^\ast \mathcal{H}^{-1}(n-1),$$}
		as asserted.
		Finally, the equation for $n=0$, it follows from equation \eqref{rel L J}.
	\end{proof}

	\subsection*{Step II: Diagonalizing the eigenvalue $\Lambda_n$:} 
	Although the differential operator $D_Q$ is a diagonal operator, the system of equations given by $Q_n\cdot D_Q = \Lambda_n Q_n$ is not a decoupled system since $\Lambda_n$ is a lower triangular matrix. However, $\Lambda_n$ can be diagonalized in a somewhat simple way:
	\begin{equation}
		\label{Propiedad Kn}
		\Gamma_n=A(n+\nu+J+1) - (n+J) = K_n \Lambda_n K_n^{-1},
	\end{equation}
	where $\Lambda_n $ is the diagonal matrix $\Lambda_n = -(n+J)$.   
	\begin{lem}\label{lem Kn}
		We can choose $K_n$ such that is a lower triangular matrix with diagonal elements $(K_n)_{i,i}=1$ for all $i=1,\ldots,N$. 
		Moreover, in this case $K_n$ is the following matrix: 
		\begin{equation}\label{def Kn}
			(K_n)_{i,j}=
			\begin{cases}
				(-1)^{i-j}\tfrac{(n+\nu + j+1)_{i-1}}{(i-j)!} \prod_{k=j}^{i-1}a_k  & i> j, \\
				1 & i=j, \\
				0 & i<j.
			\end{cases}
		\end{equation}
	\end{lem}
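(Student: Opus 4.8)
The plan is to unwind the characterization of $K_n$ in \eqref{Propiedad Kn} into a scalar two-term recursion along the rows of $K_n$ and to solve that recursion explicitly. First I would record the shapes of the matrices involved: under \eqref{A J} the matrix $\Gamma_n=A(n+\nu+J+1)-(n+J)$ is lower bidiagonal, with $(\Gamma_n)_{i,i}=-(n+i)$ for $1\le i\le N$, $(\Gamma_n)_{i,i-1}=a_{i-1}(n+\nu+i)$ for $2\le i\le N$, and all remaining entries zero, while $\Lambda_n=-(n+J)$ is diagonal with $(\Lambda_n)_{j,j}=-(n+j)$. The requirement that a lower-triangular unit-diagonal $K_n$ conjugate $\Gamma_n$ to $\Lambda_n$ is, written out, the matrix identity $K_n\Gamma_n=\Lambda_n K_n$ (this is the form in which $K_n$ is used in Section 6, where it turns $R_n=K_nQ_n$ into a solution of a diagonalized system involving $D_Q$ from \eqref{def DQs}); equivalently, the $i$-th row of $K_n$ is a left eigenvector of $\Gamma_n$ for the eigenvalue $-(n+i)$.

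Next I would spell $K_n\Gamma_n=\Lambda_n K_n$ out entrywise. Because $\Gamma_n$ is bidiagonal, its $(i,j)$-component reduces to
\[
-(n+j)(K_n)_{i,j}+a_j(n+\nu+j+1)(K_n)_{i,j+1}=-(n+i)(K_n)_{i,j},
\]
that is,
\[
(i-j)\,(K_n)_{i,j}=-a_j(n+\nu+j+1)\,(K_n)_{i,j+1},
\]
with the second term on the left absent when $j=N$. Fixing $i$ and reading this from $j=i$ downward: at $j=i$ the coefficient $i-j$ vanishes and, since $(K_n)_{i,i+1}=0$ by the triangular shape, the relation holds for any value of $(K_n)_{i,i}$, which we normalize to $1$; for $j>i$ the triangular shape makes the identity trivially true; and for $j<i$ it becomes the first-order recursion
\[
(K_n)_{i,j}=\frac{-a_j(n+\nu+j+1)}{i-j}\,(K_n)_{i,j+1}.
\]
This shows that such a $K_n$ exists, and since the eigenvalues $-(n+1),\dots,-(n+N)$ of $\Gamma_n$ are pairwise distinct, the normalization $(K_n)_{i,i}=1$ pins it down uniquely.

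Finally I would iterate the recursion from $(K_n)_{i,i}=1$ down to a column $j<i$, obtaining the telescoping product
\[
(K_n)_{i,j}=\prod_{m=j}^{i-1}\frac{-a_m(n+\nu+m+1)}{i-m},
\]
and simplify the three kinds of factors: the $i-j$ minus signs give $(-1)^{i-j}$; the denominators give $\prod_{m=j}^{i-1}(i-m)=(i-j)!$; the linear factors give $\prod_{m=j}^{i-1}(n+\nu+m+1)=(n+\nu+j+1)_{i-j}$; and the remaining factors give $\prod_{m=j}^{i-1}a_m=\prod_{k=j}^{i-1}a_k$. This is precisely the formula \eqref{def Kn}. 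I do not expect a genuine obstacle: the only point worth isolating is the observation that the lower-bidiagonal form of $\Gamma_n$ collapses the conjugation problem to a scalar two-term recursion, after which everything is a routine manipulation of the resulting telescoping product.
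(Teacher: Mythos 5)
Your algebra is carried out correctly, but the intertwining relation you start from is the reverse of the one that defines $K_n$, and this is not a cosmetic issue. Equation \eqref{Propiedad Kn} reads $\Gamma_n=K_n\Lambda_nK_n^{-1}$, i.e.\ $\Gamma_nK_n=K_n\Lambda_n$: the \emph{columns} of $K_n$ are right eigenvectors of $\Gamma_n$, which is exactly what the paper's proof computes (it finds the characteristic polynomial, notes that the eigenvalues $-(n+1),\ldots,-(n+N)$ are simple, and reads off the $r$-th column from the null space of $\lambda_rI-\Gamma_n$); this is also the direction forced by \eqref{def R}, since $R_n=K_n^{-1}Q_n$ becomes an eigenfunction of $D_Q$ precisely when $K_n^{-1}\Gamma_nK_n=\Lambda_n$. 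You instead impose $K_n\Gamma_n=\Lambda_nK_n$, i.e.\ $\Gamma_n=K_n^{-1}\Lambda_nK_n$, justifying it by the introduction's $R_n=K_nP_ne^{xA}$ rather than by \eqref{def R}; what your recursion therefore produces is $K_n^{-1}$, not $K_n$. The correct entrywise identity coming from $\Gamma_nK_n=K_n\Lambda_n$ is a recursion in the row index at fixed column,
\[
(i-j)\,(K_n)_{i,j}=a_{i-1}(n+\nu+i)\,(K_n)_{i-1,j},
\]
which telescopes from $(K_n)_{j,j}=1$ to $(K_n)_{i,j}=\tfrac{(n+\nu+j+1)_{i-j}}{(i-j)!}\prod_{k=j}^{i-1}a_k$ — the same product as yours but \emph{without} the factor $(-1)^{i-j}$. (For $N=2$ one checks directly that $(K_n)_{2,1}=+a_1(n+\nu+2)$ satisfies $\Gamma_n=K_n\Lambda_nK_n^{-1}$, while the minus sign satisfies the opposite conjugation.) That your left-eigenvector computation reproduces the sign printed in \eqref{def Kn} indicates that the printed formula is the inverse of the matrix actually defined by \eqref{Propiedad Kn}; in any case you must reconcile your starting relation with \eqref{Propiedad Kn} before claiming the lemma as stated.

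A second, smaller point: your telescoping yields the Pochhammer symbol $(n+\nu+j+1)_{i-j}$, whereas \eqref{def Kn} prints $(n+\nu+j+1)_{i-1}$; these coincide only for $j=1$, so the assertion that your product ``is precisely the formula \eqref{def Kn}'' is not literally true for $j>1$. You should either flag this as a typo in the statement (the pairing with the denominator $(i-j)!$ and the analogy with $L^{(\alpha)}_{\mu}(0)_{m,n}=\tfrac{\mu_m}{\mu_n}\tfrac{(\alpha+n+1)_{m-n}}{(m-n)!}$ from \eqref{eq:matrix-L-Laguerre} both support the subscript $i-j$) or explain why $i-1$ is intended. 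These reservations aside, your method — collapsing the bidiagonal conjugation problem to a scalar two-term recursion and solving it by a telescoping product — is the same idea as the paper's, and you make explicit the step the paper dismisses as ``a straightforward computation of these eigenspaces''; it just needs to be run along columns rather than rows.
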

	\begin{proof}
		Since $A(n+\nu+J+1) - (n+J)$ is lower triangular, the its characteristic polynomial satisfy 
		$$\det\Big(\lambda I -\Big(A(n+\nu+J+1) - (n+J)\Big)\Big)=\det(\lambda I +nI+J)=\prod_{r=1}^{N}(\lambda+n+r).$$
		Then, the eingevalues of $A(n+\nu+J+1) - (n+J)$ are 
		$$\lambda_1=-(n+1), \ldots, \lambda_N=-(n+N) \quad \text{with multiplicity $1$}.$$
		Since $K_n$ diagonalizes $A(n+\nu+J+1) - (n+J)$, 
		then its $r$-th column can be obtained from the eigenspace correspond 
		to $\lambda_r=-(n+r)$, that is
		$$Nu(\lambda_r I-A(n+\nu+J+1) + n+J)=Nu(-A(n+\nu+J+1) +J-rI), \quad \text{ for $r=1, \ldots,N$}.$$
		We can obtain \eqref{def Kn} by a straightforward computation of these eigenspaces. 
	\end{proof}
	With the matrix $K_n$ as in \eqref{def Kn}, we will consider the following matrix polynomial 
	\begin{equation}\label{def R}
		R(x,n):=K_n^{-1}Q(x,n).
	\end{equation}
	The following result that shows a relationship between the non-zero matrix entries of $R(x,n)$ and generalized Laguerre polynomials.
	
	\begin{thm}\label{Coef R Laguerre}
		Let $n\in \mathbb{N}$ and let $\nu>0$.
		The matrix elements of $R(x,n)$ are multiples of scalar Laguerre functions 
		\begin{equation}\label{lagpol}
			R(x,n)_{i,j}= 
			\begin{cases}
				L^{(\nu + j)}_{n+i-j}(x) \xi(n,i,j) & \text{if $n+i-j \geq 0$} \\
				0 & \text{if $n+i-j < 0$}.
			\end{cases}
		\end{equation}
	\end{thm}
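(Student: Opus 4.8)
The plan is to convert the matrix identity $R(\cdot,n)\cdot D_Q=\Lambda_n R(\cdot,n)$, once it is set up, into $N^2$ scalar ordinary differential equations and to recognize each of them as a Laguerre equation.

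First I would record the eigenvalue equation for $R$. By Proposition~\ref{Prop Gama} the monic MVOPs satisfy $P(\cdot,n)\cdot D=\Gamma(n)\,P(\cdot,n)$. Substituting $Q(x,n)=P(x,n)L(x)$ and $D_Q=L(x)^{-1}DL(x)$ from \eqref{def Q} and \eqref{def DQs}, and using that left multiplication by a constant matrix commutes with the right action of a differential operator, one gets $Q(\cdot,n)\cdot D_Q=\Gamma(n)\,Q(\cdot,n)$; applying $K_n^{-1}$ on the left, recalling $R(x,n)=K_n^{-1}Q(x,n)$ from \eqref{def R} and the diagonalization \eqref{Propiedad Kn}, this becomes
\[
R(\cdot,n)\cdot D_Q=\Lambda_n\,R(\cdot,n),\qquad \Lambda_n=-(n+J).
\]
By Lemma~\ref{lemma CQs}, $D_Q=\partial_x^2\,x+\partial_x(1+\nu-x+J)-J$. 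Moreover $R(x,n)$ is a \emph{matrix polynomial}: the matrix $A$ from \eqref{A J} is strictly lower triangular, hence nilpotent, so $L(x)=e^{xA}=\sum_{k=0}^{N-1}\tfrac{x^k}{k!}A^k$ is a matrix polynomial, and $K_n$ is a constant invertible matrix by Lemma~\ref{lem Kn}; consequently each entry $R(x,n)_{i,j}$ is a scalar polynomial in $x$.

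Next I would extract the $(i,j)$ entry. Since $J=\sum_k kE_{k,k}$ is diagonal, right multiplication by $J$ scales the $j$-th column by $j$ while left multiplication by $n+J$ scales the $i$-th row by $n+i$; writing $r(x):=R(x,n)_{i,j}$ and using the explicit form of $D_Q$, the $(i,j)$ entry of the displayed identity becomes
\[
x\,r''(x)+(1+\nu+j-x)\,r'(x)+(n+i-j)\,r(x)=0,
\]
which is the Laguerre differential equation with parameter $\alpha=\nu+j$ and spectral value $m=n+i-j$. Comparing top-degree coefficients shows that a nonzero polynomial solution must have degree exactly $n+i-j$; hence if $n+i-j<0$ then $r\equiv 0$, the second case in \eqref{lagpol}. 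If $n+i-j\ge 0$, then $L^{(\nu+j)}_{n+i-j}$ is a polynomial solution of the same equation with nonzero leading coefficient $(-1)^{n+i-j}/(n+i-j)!$, so subtracting a suitable scalar multiple of it from $r$ yields a polynomial solution of strictly smaller degree, which must vanish; therefore $R(x,n)_{i,j}=\xi(n,i,j)\,L^{(\nu+j)}_{n+i-j}(x)$ for a constant $\xi(n,i,j)$ (equal to $0$ precisely when $R(x,n)_{i,j}\equiv 0$), establishing \eqref{lagpol}.

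The delicate part is not the ODE analysis, which is elementary, but the bookkeeping in the two reductions above: one must track the conjugations relating $D$, $D_Q$, $\Gamma_n$ and $\Lambda_n$ so that the right-hand eigenvalue ends up being the \emph{diagonal} matrix $-(n+J)$, and then separate the contributions of $J$ to the $i$-th row (producing $n+i$) from those to the $j$-th column (producing $\nu+j$), since it is exactly this split that yields the Laguerre parameter $\nu+j$ and the degree $n+i-j$.
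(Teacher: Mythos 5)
Your proposal is correct and follows essentially the same route as the paper: read off the $(i,j)$ entry of the eigenvalue equation $R(\cdot,n)\cdot D_Q=-(n+J)R(\cdot,n)$, recognize the scalar Laguerre equation with parameter $\nu+j$ and spectral value $n+i-j$, and conclude by uniqueness of polynomial solutions. You in fact supply a detail the paper leaves implicit, namely that each entry of $R(x,n)=K_n^{-1}P(x,n)e^{xA}$ is a polynomial (since $A$ is nilpotent), which is what justifies restricting to polynomial solutions of the ODE in both cases of \eqref{lagpol}.
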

	\begin{proof}
		Notice that the polynomials  $R(x,n)$'s are eigenfunctions of  $D_Q=\partial_{x}^2 x + \partial_x ( 1+\nu-x+J) -J$ 
		with associated eigenvalues $\Lambda_n=-(n+J)$ where $J$ is the diagonal matrix $\mathrm{diag}(1,2,\ldots,N)$. 
		If we look the $(i,j)$-entry of
		\begin{equation*}
			xR''(x,n) + R'(x,n) (1+\nu-x+J) + (nR(x,n) + JR(x,n) -R(x,n)J)=0,
		\end{equation*}
		we obtain that the following expression
		$$xR''(x,n)_{i,j} + \sum_{k=1}^{N} R'(x,n)_{i,k} (1+\nu-x+J)_{k,j} + nR(x,n)_{i,j} + \sum_{k=1}^{N} (J_{i,k}R(x,n)_{k,j} - R(x,n)_{i,k}J_{k,j}) = 0.$$
		Since $J=\mathrm{diag}(1,2,\ldots,N)$, the above equality is equivalent to
		$$xR''(x,n)_{i,j} +  R'(x,n)_{i,j} (1+\nu-x+j) + (n+i-j)R(x,n)_{i,j} = 0.$$
		Hence, we obtain that 
		$$R(x,n)_{i,j}= L^{(\nu+j)}_{n+i-j}(x) \xi(n,i,j),$$
		as asserted.
		
		On the other hand,  it is well-known that if $M<0$ the only solution for the differential equation 
		$$x P''(x) +  P'(x) (1+\nu-x) + M P(x) = 0,$$
		is $P(x)=0$ and since
		$$xR''(x,n)_{i,j} +  R'(x,n)_{i,j} (1+\nu-x+j) + (n+i-j)R(x,n)_{i,j} = 0,$$
		we obtain that $R(x,n)_{i,j}=0$ if $n+i-j<0$, as asserted.
	\end{proof}
	
	\begin{rmk}
		Notice that by the above theorem the are only defined for $n+i-j \geq 0$, so we extend its definition as follows
		\begin{equation}\label{def extend xi}
			\xi(n,i,j)=0 \qquad \text{if $n+i-j < 0$}.
		\end{equation}
	\end{rmk}
	
	It is well-known that the generalized Laguerre polynomial satisifes
	$$L^{(\alpha)}_{N}(0)=\frac{(\alpha+1)_N}{N!},$$
	where $(a)_N$ is the Pochhammer symbol defined by
	\begin{equation}\label{Pochhamer symbol}
		(a)_{N}= 
		\begin{cases}
			1 & \text{if $N=0$},\\
			a(a+1)\cdots(a+N-1) & \text{if $N>0$}.
		\end{cases}	
	\end{equation}
	As a direct consequence of the above theorem we obtain the following corollary.
	\begin{cor}\label{coro R(0,n)}
		Let $n\in \mathbb{N}$ and let $\nu>0$.
		Then, the coefficients of $R(0,n)$ satisfy
		\begin{equation}\label{lagpol x=0}
			R(0,n)_{i,j}= 
			\begin{cases}
				\frac{(\nu+j+1)_{n+i-j}}{(n+i-j)!} \xi(n,i,j) & \text{if $n+i-j \geq 0$,} \\
				0 & \text{if $n+i-j < 0$}.
			\end{cases}
		\end{equation}
		In particular, the $(i,n+i)$-th and $(i,n+i-1)$-th coordinates of $R(0,n)$ satisfy $$R(0,n)_{i,n+i}=\xi(n,i,n+i) \quad \text{and}\quad R(0,n)_{i,n+i-1}=(n+\nu+i)\xi(n,i,n+i-1).$$
	\end{cor}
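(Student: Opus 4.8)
The plan is to deduce the statement directly from Theorem~\ref{Coef R Laguerre} by evaluating at $x=0$. For $n+i-j<0$ the theorem already gives $R(x,n)_{i,j}=0$ identically, so in particular $R(0,n)_{i,j}=0$, which is the second branch of \eqref{lagpol x=0}; this is also consistent with the convention $\xi(n,i,j)=0$ for $n+i-j<0$ fixed in \eqref{def extend xi}.

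For $n+i-j\ge 0$, Theorem~\ref{Coef R Laguerre} gives $R(x,n)_{i,j}=L^{(\nu+j)}_{n+i-j}(x)\,\xi(n,i,j)$, so the only input needed is the classical value of a generalized Laguerre polynomial at the origin, $L^{(\alpha)}_{N}(0)=\frac{(\alpha+1)_N}{N!}$, recalled just before the corollary. Specializing $\alpha=\nu+j$ and $N=n+i-j$ yields
$$R(0,n)_{i,j}=L^{(\nu+j)}_{n+i-j}(0)\,\xi(n,i,j)=\frac{(\nu+j+1)_{n+i-j}}{(n+i-j)!}\,\xi(n,i,j),$$
which is precisely \eqref{lagpol x=0}.

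Finally I would read off the two distinguished entries by substituting the relevant index shifts into this formula. For $j=n+i$ we get $n+i-j=0$, hence $(\nu+j+1)_0=1=0!$ and $R(0,n)_{i,n+i}=\xi(n,i,n+i)$. For $j=n+i-1$ we get $n+i-j=1$, hence $(\nu+j+1)_1=\nu+j+1=n+\nu+i$ and $1!=1$, so $R(0,n)_{i,n+i-1}=(n+\nu+i)\,\xi(n,i,n+i-1)$. There is no real obstacle here: the corollary is an immediate specialization of Theorem~\ref{Coef R Laguerre}, and the only thing requiring care is the parameter bookkeeping $\alpha\mapsto\nu+j$, $N\mapsto n+i-j$ in the Laguerre value at the origin.
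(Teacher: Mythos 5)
Your proposal is correct and matches the paper's (implicit) argument exactly: the corollary is obtained by evaluating the formula of Theorem~\ref{Coef R Laguerre} at $x=0$ and inserting $L^{(\alpha)}_{N}(0)=\frac{(\alpha+1)_N}{N!}$ with $\alpha=\nu+j$, $N=n+i-j$. The index bookkeeping for the two distinguished entries is also right, since $\nu+j+1=n+\nu+i$ when $j=n+i-1$.
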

	Now we need to identify the coefficients $\xi(n,i,j)$. 
	For this, we exploit the relation in Proposition \ref{prop Q's}. In the following lemma, we observe that the factor $\mathcal{H}(n)(A-1)^\ast\mathcal{H}^{-1}(n-1)$ in this relation is turned into a diagonal matrix via multiplication by appropriate matrices $K_n$. This will allow us to obtain a simple recursion for $\xi(n,i,j)$. For this purpose, we define the following 
	\begin{equation}\label{GI}
		G(n):= K_n^{-1} \mathcal{H}_{n} (A^{\ast}-1)\mathcal{H}_{n-1}^{-1}K_{n-1}\qquad I(n):= K_n^{-1} \mathcal{H}_n J \mathcal{H}_n^{-1}K_{n},
	\end{equation}	
	where $\mathcal{H}_n$ and $K_n$ are as in \eqref{BCH} and \eqref{def Kn}, respectively. 
	\begin{lem}\label{diag GI}
		For $n\geq 1$ and let  $G(n)$, $I(n)$ be the matrices as in \eqref{GI}.
		Then, $G(n)$ is diagonal and $I(n)$ satisfies
		$$(I(n))_{i,i} = i, \qquad (I(n))_{i,j}=0 \ \text{ for } i\neq j \text{ and } i \neq j-1.$$
		Moreover
		\begin{equation}\label{eq GI diag}
			(G(n))_{i,i}= (\mathcal{H}_{n} (A^{\ast}-1)\mathcal{H}_{n-1}^{-1})_{i,i} \quad \text{and} \quad (I(n))_{i,i+1}=(\mathcal{H}_n J \mathcal{H}_n^{-1})_{i,i+1}.
		\end{equation}
	\end{lem}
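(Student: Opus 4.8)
The plan is to derive all three assertions as purely algebraic consequences of the commutation identities \eqref{GamaM0} and \eqref{GamaMdag-1} of Proposition \ref{prop infinitas eq}, transported through the diagonalizing conjugation $\Gamma_m = K_m\Lambda_m K_m^{-1}$ of \eqref{Propiedad Kn} and combined with the fact (Lemma \ref{lem Kn}) that each $K_m$ is lower triangular with unit diagonal. Notably this uses none of the differential relations of Proposition \ref{prop Q's}; it is the discrete relations that do the work.

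\emph{Diagonality of $G(n)$.} Put $X(n)=\mathcal{H}_n(A^{\ast}-1)\mathcal{H}_{n-1}^{-1}$, so that $X(n)=K_nG(n)K_{n-1}^{-1}$. The coefficient of $\delta^{-1}$ in \eqref{GamaMdag-1} is the matrix identity $\Gamma_nX(n)-X(n)\Gamma_{n-1}=-X(n)$, i.e. $\Gamma_nX(n)=X(n)(\Gamma_{n-1}-I)$. Substituting $X(n)=K_nG(n)K_{n-1}^{-1}$, multiplying on the left by $K_n^{-1}$ and on the right by $K_{n-1}$, and using $K_n^{-1}\Gamma_nK_n=\Lambda_n$ and $K_{n-1}^{-1}\Gamma_{n-1}K_{n-1}=\Lambda_{n-1}$ with $\Lambda_m=-(m+J)$, one gets $\Lambda_nG(n)=G(n)(\Lambda_{n-1}-I)$. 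The decisive point is the arithmetic identity $\Lambda_{n-1}-I=-(n-1+J)-I=-(n+J)=\Lambda_n$, which collapses this to $\Lambda_nG(n)=G(n)\Lambda_n$, equivalently $[J,G(n)]=0$. Since $J=\mathrm{diag}(1,\dots,N)$ has pairwise distinct eigenvalues, $G(n)$ must be diagonal. (Had the two diagonal matrices $\Lambda_n$ and $\Lambda_{n-1}-I$ been different, the same argument would have wrongly forced $G(n)=0$; so this coincidence is exactly the heart of the statement.)

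\emph{Shape of $I(n)$.} Conjugating the matrix identity \eqref{GamaM0} by $K_n^{-1}(\cdot)K_n$ and using $K_n^{-1}\Gamma_nK_n=\Lambda_n$ yields $[\Lambda_n,I(n)]=n+\Lambda_n+I(n)$, which simplifies to $[-J,I(n)]=I(n)-J$. Written entrywise, and using that $J$ is diagonal with entries $1,\dots,N$, this reads $(j-i-1)(I(n))_{i,j}=-j\,\delta_{i,j}$ for all $i,j$. For $i=j$ it gives $(I(n))_{i,i}=i$, while for $i\neq j$ it forces $(I(n))_{i,j}=0$ unless $j=i+1$; hence $I(n)$ is upper bidiagonal with the prescribed diagonal.

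\emph{The formulas \eqref{eq GI diag}.} These follow from the triangular shape of the $K$'s. Since $K_n$ and $K_n^{-1}$ are lower triangular with all diagonal entries $1$ and $G(n)$ is diagonal, in the $(i,i)$-entry of $\mathcal{H}_n(A^{\ast}-1)\mathcal{H}_{n-1}^{-1}=K_nG(n)K_{n-1}^{-1}$ the only surviving term is $(K_n)_{i,i}(G(n))_{i,i}(K_{n-1}^{-1})_{i,i}=(G(n))_{i,i}$; likewise, using that $I(n)$ is upper bidiagonal, the $(i,i+1)$-entry of $\mathcal{H}_nJ\mathcal{H}_n^{-1}=K_nI(n)K_n^{-1}$ reduces to $(K_n)_{i,i}(I(n))_{i,i+1}(K_n^{-1})_{i+1,i+1}=(I(n))_{i,i+1}$. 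The only slightly delicate step — and the main obstacle, such as it is — is checking that every other product in these two sums vanishes; this is a short index chase that relies on the bidiagonal shape of $I(n)$ already established in the previous step.
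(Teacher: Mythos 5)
Your proof is correct and follows essentially the same route as the paper: both derive $\Lambda_nG(n)-G(n)\Lambda_{n-1}=-G(n)$ from \eqref{GamaMdag-1} and $[\Lambda_n,I(n)]=n+\Lambda_n+I(n)$ from \eqref{GamaM0} via the conjugation $K_m^{-1}\Gamma_mK_m=\Lambda_m$, read off the entries, and then use the unipotent lower-triangular shape of the $K_m$ for \eqref{eq GI diag}. (Your entrywise relation for $I(n)$ should read $(j-i-1)(I(n))_{i,j}=-i\,\delta_{i,j}$, but this coincides with what you wrote on the support of $\delta_{i,j}$, so nothing is affected.)
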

	
	\begin{proof}
		By equation \eqref{GamaMdag-1}, $$ [\Gamma_n,\mathcal{H}_n(A^{\ast}-1)\mathcal{H}_{n-1}^{-1}\delta^{-1}]=-\mathcal{H}_n(A^{\ast}-1)\mathcal{H}_{n-1}^{-1}.$$
		Then, 
		$$K_n^{-1} \Gamma_n(\mathcal{H}_n(A^{\ast}-1)\mathcal{H}_{n-1}^{-1})K_{n-1}-K_n^{-1}(\mathcal{H}_n(A^{\ast}-1)\mathcal{H}_{n-1}^{-1})\Gamma_{n-1}K_{n-1}=-G(n).$$
		Then, by $K_n^{-1}\Gamma_nK_n=\Lambda_n=-(n+J)$ we obtain that
		$$\Lambda_{n} G(n)-G(n) \Lambda_{n-1}=-G(n).$$
		The assertion it follows by observing the $(i,j)$-entry for $i\neq j$.
		On the other hand, from definition we have that
		$$K_n G(n) K_{n-1}^{-1}=  \mathcal{H}_{n} (A^{\ast}-1)\mathcal{H}_{n-1}^{-1}.$$
		Since $K_n$ and $K_{n-1}^{-1}$ are both lower triangular matrices with $1$'s in its diagonal, then the left term in the above equation is a lower triangular matrix with $(i,i)$-th coordinate equal to $(G(n))_{i,i}$ and so
		$$(\mathcal{H}_{n} (A^{\ast}-1)\mathcal{H}_{n-1}^{-1})_{i,i}=(K_n G(n) K_{n-1}^{-1})_{i,i}=(G(n))_{i,i},$$
		as asserted.
		
		Now, for the second assertion, we can proceed as in the same way, 
		in this case by equation \eqref{GamaM0}, we have that 
		$$[\Gamma_n,\mathcal{H}_nJ \mathcal{H}_n^{-1}]=n+\Gamma_n + \mathcal{H}_nJ\mathcal{H}_n^{-1}.$$
		Then, we obtain that
		$$K_n^{-1}\Gamma_n \mathcal{H}_nJ \mathcal{H}_n^{-1}K_n-K_n^{-1} \mathcal{H}_nJ \mathcal{H}_n^{-1}\Gamma_n K_n = K_n^{-1}(n+\Gamma_n+\mathcal{H}_nJ \mathcal{H}_n^{-1})K_n.$$
		Hence, by $K_n^{-1}\Gamma_nK_n=\Lambda_n=-(n+J)$ we obtain that
		$$\Lambda_n I(n)-I(n) \Lambda_n = n+ \Lambda_n + I(n)).$$
		The proof follows by observing the $(i,j)$-entry in the above matrix equality.
		Finally, from definition we have 
		$$K_nI(n)K_{n}^{-1}=\mathcal{H}_n J \mathcal{H}_n^{-1}.$$
		So, in general we have that 
		{\small $$(K_nI(n)K_{n}^{-1})_{i,j}= \sum_{k=1}^{N} (K_n)_{i,k} (I(n)K_{n}^{-1})_{k,j} \quad \text{and}\quad (I(n)K_{n}^{-1})_{k,j}=(I(n))_{k,k}(K_{n}^{-1})_{k,j}+(I(n))_{k,k+1}(K_{n}^{-1})_{k+1,j}.$$}
		By taking into account that $(K_n)_{i,j}=(K_n^{-1})_{i,j}=0$ for $j>i$, 
		if $j=i+1$ we obtain that
		{\small $$(K_nI(n)K_{n}^{-1})_{i,i+1}=\sum_{k=1}^{N} (K_n)_{i,k} (I(n)K_{n}^{-1})_{k,j}=\sum_{k=1}^{i} (K_n)_{i,k} (I(n)K_{n}^{-1})_{k,i+1}= (I(n))_{i,i+1}(K_{n}^{-1})_{i+1,i+1},$$}
		by taking into account that $(K_{n}^{-1})_{i+1,i+1}=1$ for any $i$. Therefore, we have that \break
		$(K_nI(n)K_{n}^{-1})_{i,i+1}=(I(n))_{i,i+1}$, as desired.
	\end{proof}

	\medskip
	
	The following proposition is a consequence of the relation given by the Casimir operator and Proposition 6.4
	
	\begin{prop}\label{recurrencia xi general}
		Let $\xi(n,i,j)$ as in Theorem \ref{Coef R Laguerre}.  
		If $n+i-j> 0$, then the constants $\xi(n,i,j)$'s satisfy the following: 
		\begin{enumerate}[($a$)]
			\item $\xi(0,i,j)={(K_0^{-1})}_{i,j} \tfrac{(i-j)!}{(\nu+j+1)_{i-j}}$,
			\item If $i=1$ and $n>0$ then 
			\begin{equation}\label{recurrence xi i=1}
				\xi(n,1,j) =\tfrac{(G(n))_{1,1} }{ n+\nu+1} \xi(n-1,1,j),
			\end{equation}
			\item If $i>1$ and $n>0$, then 
			\begin{equation}\label{recurrence xi}
				\xi(n,i,j) = a_{i-1}\xi(n,i-1,j) +\tfrac{(G(n))_{i,i} }{ n+\nu+i} \xi(n-1,i,j),
			\end{equation}
		\end{enumerate}	
		with $G(n)$ as in \eqref{GI}.
	\end{prop}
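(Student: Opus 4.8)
The plan is to conjugate the key relation of Proposition~\ref{prop Q's} by the matrices $K_n$ of Lemma~\ref{lem Kn} (that relation is the one carried by the Casimir $\mathcal{C}=Ax-J$, since $\mathcal{C}_Q=-J$), rewrite everything in terms of the entries $R(x,n)_{i,j}=L^{(\nu+j)}_{n+i-j}(x)\,\xi(n,i,j)$ of Theorem~\ref{Coef R Laguerre}, and then extract a three-term recursion by comparing coefficients of Laguerre polynomials. Writing $Q(x,n)=K_nR(x,n)$ as in \eqref{def R} and using that $K_n$ does not depend on $x$, multiplication of the identity of Proposition~\ref{prop Q's} on the left by $K_n^{-1}$ gives, for $n\ge 1$,
\[
-R(x,n)J \;=\; xR'(x,n)\;-\;\bigl(K_n^{-1}(n+J)K_n\bigr)R(x,n)\;+\;G(n)R(x,n-1),
\]
where $G(n)$ is as in \eqref{GI}, hence diagonal by Lemma~\ref{diag GI} with $(G(n))_{i,i}=\bigl(\mathcal{H}_{n}(A^{\ast}-1)\mathcal{H}_{n-1}^{-1}\bigr)_{i,i}$.

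The first point to settle is the shape of $M_n:=K_n^{-1}(n+J)K_n$. Since $\Gamma_n=A(n+\nu+J+1)-(n+J)$ and $K_n^{-1}\Gamma_nK_n=\Lambda_n=-(n+J)$, one has $M_n=(n+J)+K_n^{-1}A(n+\nu+J+1)K_n$, so it is enough to check that $K_n$ commutes with $A(n+\nu+J+1)$; I would verify this directly from the explicit lower-triangular form of $K_n$ in Lemma~\ref{lem Kn} (equivalently, from the recursion satisfied by its columns, which are the eigenvectors of $\Gamma_n$). It then follows that $M_n$ is lower bidiagonal, with $(M_n)_{i,i}=n+i$ and $(M_n)_{i,i-1}=a_{i-1}(n+\nu+i)$.

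Now I would read off the $(i,j)$-entry of the displayed identity, setting $m:=n+i-j$ and $\alpha:=\nu+j$, so that $m+\alpha=n+\nu+i$. The term $M_nR(x,n)$ contributes $(n+i)L^{(\alpha)}_m(x)\,\xi(n,i,j)+a_{i-1}(n+\nu+i)L^{(\alpha)}_{m-1}(x)\,\xi(n,i-1,j)$, the term $G(n)R(x,n-1)$ contributes $(G(n))_{i,i}L^{(\alpha)}_{m-1}(x)\,\xi(n-1,i,j)$, and for $xR'(x,n)_{i,j}$ I would use the classical identity $x\,\tfrac{d}{dx}L^{(\alpha)}_m(x)=mL^{(\alpha)}_m(x)-(m+\alpha)L^{(\alpha)}_{m-1}(x)$. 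Since $n+i-j>0$ by hypothesis, $L^{(\alpha)}_{m-1}$ has degree $m-1<m$, so $L^{(\alpha)}_m$ and $L^{(\alpha)}_{m-1}$ are linearly independent; the $L^{(\alpha)}_m$-coefficients cancel identically, and equating the $L^{(\alpha)}_{m-1}$-coefficients yields one scalar identity linking $\xi(n,i,j)$, $\xi(n,i-1,j)$ and $\xi(n-1,i,j)$. Dividing by $n+\nu+i\ne 0$ gives \eqref{recurrence xi}, and the case $i=1$ (where the $a_{i-1}$ term is absent, as $M_n$ has no entry below its first row) gives \eqref{recurrence xi i=1}. For part $(a)$ no differential relation is needed: from $P(x,0)=I$ we get $Q(x,0)=e^{xA}$, hence $R(x,0)=K_0^{-1}e^{xA}$, whose value at $x=0$ is $K_0^{-1}$; comparing the $(i,j)$-entry with $R(0,0)_{i,j}=L^{(\nu+j)}_{i-j}(0)\,\xi(0,i,j)=\tfrac{(\nu+j+1)_{i-j}}{(i-j)!}\,\xi(0,i,j)$ (as in Corollary~\ref{coro R(0,n)}) and solving for $\xi(0,i,j)$ yields $(a)$.

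The main obstacle I anticipate is the identification $K_n^{-1}(n+J)K_n=(n+J)+A(n+\nu+J+1)$, equivalently the commutation of $K_n$ with $A(n+\nu+J+1)$: once this, and hence the bidiagonality of $M_n$, is in place, the remainder is routine manipulation with the Laguerre differential and contiguity relations together with Lemma~\ref{diag GI}. A secondary point requiring care is that in the $(i,j)$-entry equation only the two Laguerre functions $L^{(\nu+j)}_{n+i-j}$ and $L^{(\nu+j)}_{n+i-j-1}$ occur; this is exactly what makes the coefficient comparison legitimate, and it is also why the hypothesis $n+i-j>0$ is needed: for $n+i-j=0$ the function $L^{(\nu+j)}_{n+i-j-1}$ drops out and no constraint results.
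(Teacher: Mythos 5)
Your argument is correct and follows essentially the same route as the paper: both start from the identity of Proposition~\ref{prop Q's}, conjugate by $K_n^{-1}$, invoke the lower-bidiagonality of $K_n^{-1}(n+J)K_n$ (which the paper simply asserts in the form $K_n^{-1}(n+J)K_n=(n+J)-A(n+\nu+J+1)$ --- precisely the commutation fact you defer, and which does follow from the recursion $(i-j)(K_n)_{i,j}=(K_n)_{i-1,j}\,(A(n+\nu+J+1))_{i,i-1}$ satisfied by the eigenvector columns of $K_n$), and use the diagonality of $G(n)$ from Lemma~\ref{diag GI} to extract a scalar three-term relation from the $(i,j)$ entry. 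The only technical difference is that the paper evaluates the identity at $x=0$, which kills the $xR'(x,n)$ term outright and replaces your comparison of the $L^{(\nu+j)}_{n+i-j}$- and $L^{(\nu+j)}_{n+i-j-1}$-coefficients by the single ratio $L^{(\nu+j)}_{n+i-j-1}(0)/L^{(\nu+j)}_{n+i-j}(0)=(n+i-j)/(n+\nu+i)$; your general-$x$ version with the identity $x\tfrac{d}{dx}L^{(\alpha)}_m=mL^{(\alpha)}_m-(m+\alpha)L^{(\alpha)}_{m-1}$ works equally well. One small point to pin down: with your stated value $(M_n)_{i,i-1}=+a_{i-1}(n+\nu+i)$ the term enters the final recursion as $-a_{i-1}\xi(n,i-1,j)$, whereas the sign convention of \eqref{def Kn} (and the paper's assertion above) yields the $+a_{i-1}$ of \eqref{recurrence xi} --- the paper is itself not fully consistent between \eqref{def Kn} and \eqref{Propiedad Kn}, so just make sure your choice of $K_n$ and the sign in $M_n$ are matched.
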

	\begin{proof}
		For the first assertion, recall that 
		{\small $$R(0,0)=K_0^{-1}Q(0,0)=K_0^{-1}P(0,0)=K_0^{-1},$$} 
		thus we have that  
		{\small $${(K_0^{-1})}_{i,j}=\xi(0,i,j)L^{(\nu+j)}_{i-j}(0).$$}
		By taking into account that $L^{(\nu)}_N(0)=\tfrac{(\nu+1)_N}{N!}$ for any $N$, hence we obtain 
		{\small $${(K_0^{-1})}_{i,j} \tfrac{(i-j)!}{(\nu+j+1)_{i-j}}=\xi(0,i,j),$$}
		as asserted.
		
		For items ($b$) and ($c$), recall that Proposition \ref{prop Q's} implies that 
		{\small $$-Q(0,n)J = -(n+J)Q(0,n) + \mathcal{H}(n) (A^{\ast}-1) \mathcal{H}(n-1)^{-1} Q(0,n-1).$$}
		Then,
		{\small $$-K_n R(0,n)J = -(n+J)K_n R(0,n) + \mathcal{H}(n) (A^{\ast}-1) \mathcal{H}(n-1)^{-1} K_{n-1}R(0,n-1),$$}
		and so we obtain that
		{\small $$-R(0,n)J = -K_{n}^{-1}(n+J)K_n R(0,n) +G(n) R(0,n-1)$$}
		where $G(n)=K_{n}^{-1}\mathcal{H}(n) (A^{\ast}-1) \mathcal{H}(n-1)^{-1} K_{n-1}$.
		By recalling that 
		$$K_{n}^{-1}(n+J)K_n R(0,n)=(n+J)-A(n+\nu+J+1),$$
		we obtain that 
		{\small $$-R(0,n)J = -(n+J) R(0,n)+ A(n+\nu+J+1) R(0,n)+ G(n)R(0,n-1).$$}
		Thus, in terms of coordinates we have that
		{\small 
			\begin{equation}\label{eq j R}
				-j R(0,n)_{i,j} = -(n+i)R(0,n)_{i,j} + \sum_{k=1}^{N}(A(n+\nu + J+1))_{i,k}R(0,n)_{k,j} +  \sum_{k=1}^{N} (G(n))_{i,k}R(0,n-1)_{k,j}.
			\end{equation}
		}
		Since $A$ has only non-zero entries $a_i$'s in the place $(i,i+1)$ and $n+\nu + J+1$ is diagonal, we obtain that
		\begin{equation}\label{entradas AJ}
			\sum_{k=1}^{N}(A(n+\nu + J+1))_{i,k}R(0,n)_{k,j}=
			\begin{cases}
				(a_{i-1}(n+\nu + i))R(0,n)_{i-1,j} & \text{if $i>1$}, \\
				0 & \text{if $i=1$}.
			\end{cases}
		\end{equation}	
		Thus, for $i>1$ the equation \eqref{eq j R} takes the form	
		{ \small $$-j R(0,n)_{i,j}=  -(n+i)R(0,n)_{i,j} + a_{i-1}(n+\nu + i)R(0,n)_{i-1,j}+  (G(n))_{i,i}R(0,n-1)_{i,j}$$} 
		where in the last term of the equality, we use Lemma \ref{diag GI}. 
		By Theorem \ref{Coef R Laguerre}, we have that
		{\small $$
			(n+i-j) L^{(\nu+j)}_{n+i-j}(0) \xi(n,i,j) = L^{(\nu+j)}_{n+i-1-j}(0) \Big(a_{i-1}(n+\nu+i)   \xi(n,i-1,j)+ (G(n))_{i,i} \xi(n-1,i,j)\Big).$$}
		Therefore, since $n+i-j>0$ we obtain a similar expression of \eqref{recurrence xi} by multiplication for $((n+i-j) L^{(\nu+j)}_{n+i-j}(0))^{-1}$, that is
		$$			\xi(n,i,j) = C_{1}(n,i,j)\xi(n,i-1,j) +C_{2}(n,i,j) \xi(n-1,i,j),$$
		where
		$$C_1(n,i,j) = \tfrac{(a_{i-1}(n+\nu+i))  L^{(\nu+j)}_{n+i-1-j}(0)}{(n+i-j) L^{(\nu+j)}_{n+i-j}(0)} \quad \text{and} \quad C_2(n,i,j) = \tfrac{(G(n))_{i,i} L^{(\nu+j)}_{n-1+i-j}(0)}{(n+i-j) L^{(\nu+j)}_{n+i-j}(0)}.$$
		To finish the proof, recall that
		$$L^{(\alpha)}_{N}(0)=\frac{(\alpha+1)_N}{N!},$$
		where $(a)_N$ is the Pochhammer symbol defined by
		$$(a)_{N}= 
		\begin{cases}
			1 & \text{if $N=0$},\\
			a(a+1)\cdots(a+N-1) & \text{if $N>0$}.
		\end{cases}	$$
		Hence, we have that 
		$$\frac{L^{(\nu+j)}_{n+i-1-j}(0)}{L^{(\nu+j)}_{n+i-j}(0)}= \frac{n+i-j}{n+\nu+i}.$$
		Therefore we obtain \eqref{recurrence xi}, as asserted.
		
		Now, for  $i=1$ the equation \eqref{eq j R} takes the form
		$$					-j R(0,n)_{1,j} = -(n+1)R(0,n)_{i,j} + (G(n))_{1,1}R(0,n-1)_{1,j}.$$
		So, the equation \eqref{recurrence xi i=1} it can be prooved in a similar way as in the case $i>1$. 
	\end{proof}
	\begin{rmk}\label{rem item a}
		Notice that the item ($a$) in the above proposition still holds for $n+i-j=0$ since in this case we just use the definition of $\xi$'s.
	\end{rmk}
	
	Now, we are going to study the case $n+i-j=0$. 
	In this case, we have the following result.
	
	\begin{prop}\label{formula j=n+i}
		Let $n\in \mathbb{N}_0$ and let $I(n), \,G(n)$ be as in \eqref{GI}. 
		If $i\ge1$, then the constants $\xi$'s satisfy the following:
		\begin{enumerate}[($a$)]
			\item $\xi(0,i,i)= 1$ and $\xi(1,i,i+1)= I(0)_{i,i+1}$.
			\item If $i=1$ and $n>0$, then 
			{\small \begin{equation*}
					N_1(n,i) \xi(n,1,n+1)= N_2(n,i) \xi(n-1,2,n+1), \quad \text{ with }
			\end{equation*} }
			{\small $$N_1(n,i)=(\nu+2n+3) \tfrac{(G(n+1))_{1,1} }{ n+\nu+2} + (n+2+\nu) + {I(n)}_{1,2} (\nu+2n+2) a_1, \quad N_2(n,i)={I(n)}_{1,2} (\nu+2n+2)\tfrac{(G(n))_{2,2} }{ n+\nu+2}.$$}
			\item If $i>1$ and $n>0$, then 
			\begin{equation}\label{rec j=n+i}
				M_{1}(n,i)\xi(n+1,i-1,n+i)= M_2(n,i)\xi(n,i,n+i)+M_3(n,i)\xi(n-1,i+1,n+i),
			\end{equation}
			{\small $$ \quad \text{ with } \quad M_{1}(n,i)=a_{i-1}\Big( i(n+1+\nu+i)_{i-2}-(n+\nu+i)\Big),$$} 
			{\small$$M_{2}(n,i)=(G(n+1))_{i,i} + (n+\nu+i+1), \quad  M_3(n,i)=(I(n))_{i,i+1} G(n)_{i+1,i+1}.$$}
		\end{enumerate}
	\end{prop}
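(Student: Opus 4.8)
The plan is to produce, for each fixed value $d$ of the third index, a recursion propagating the constants along the anti-diagonal $\{(m,d-m,d)\}$, on which the matrix entries $R(x,m)_{d-m,d}=L^{(\nu+d)}_0(x)\,\xi(m,d-m,d)=\xi(m,d-m,d)$ are \emph{constant} in $x$. The recursion of Proposition \ref{recurrencia xi general} came from evaluating Proposition \ref{prop Q's} at $x=0$, and on this anti-diagonal that evaluation is vacuous: every term there either carries an explicit factor $x$ or multiplies an entry $R(x,m)_{r,c}$ with $m+r-c<0$, which vanishes. So a genuinely new identity is required, and the natural candidate is the one attached to the operator $\mathcal{D}$ (equivalently $M$), which has not been used yet.

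For part~($a$): since $Q(0,0)=P(0,0)=I$ we get $R(0,0)=K_0^{-1}$, hence $\xi(0,i,i)=R(0,0)_{i,i}=(K_0^{-1})_{i,i}=1$ (this is also Proposition \ref{recurrencia xi general}($a$) with $j=i$, via Remark \ref{rem item a}). For $\xi(1,i,i+1)$ one specialises to $n=0$ the identity used below: on the anti-diagonal the only surviving terms link $\xi(1,i,i+1)$ to $\xi(0,i+1,i+1)=1$, the coefficient being exactly the superdiagonal entry $I(0)_{i,i+1}$ of Lemma \ref{diag GI}, so $\xi(1,i,i+1)=I(0)_{i,i+1}$.

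For parts~($b$) and~($c$): start from $Q_n\cdot\mathcal{D}_Q=M\cdot Q_n$ with $\mathcal{D}_Q=\partial_x x-x$ and $M=(A-1)\delta-(n+1+\nu)-\mathcal{H}_n J\mathcal{H}_n^{-1}$ (Lemma \ref{lemma CQs}, Proposition \ref{prop cal D y D dag}); conjugating by $K_n$ turns this into
$$xR_n'(x)-xR_n(x)=K_n^{-1}(A-1)K_{n+1}\,R_{n+1}(x)-(n+1+\nu)R_n(x)-I(n)R_n(x),$$
with $I(n)$ as in \eqref{GI}. Evaluate the $(i,n+i)$ entry and, for $i>1$, the $(i-1,n+i)$ entry — on and just above the anti-diagonal $c=r+m$ — and compare the coefficients of $x^0$ and $x^1$. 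Since $R(x,m)_{r,c}=0$ for $m+r-c<0$ and the only Laguerre factors occurring, $L^{(\nu+n+i)}_0$ and $L^{(\nu+n+i)}_1$, are affine in $x$, only finitely many terms survive; this gives linear relations among $\xi(n+1,i-1,n+i)$, $\xi(n+1,i,n+i)$, $\xi(n,i,n+i)$ and $\xi(n,i+1,n+i)$. The last two are interior ($n+(i+1)-(n+i)=1$, $(n+1)+i-(n+i)=1$), hence are eliminated by Proposition \ref{recurrencia xi general}($c$) — and by Proposition \ref{recurrencia xi general}($b$) when $i=1$ — the latter, applied at levels $n$ and $n+1$, introducing $G(n)_{i+1,i+1}$ and $G(n+1)_{i,i}$. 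What remains is precisely a relation of the announced shape $M_1(n,i)\xi(n+1,i-1,n+i)=M_2(n,i)\xi(n,i,n+i)+M_3(n,i)\xi(n-1,i+1,n+i)$; the coefficients are then read off from the explicit $K_n$ of Lemma \ref{lem Kn}, the relevant entries of $K_n^{-1}(A-1)K_{n+1}$ and of $K_n^{-1}A(n+\nu+J+1)K_n$ being computed from the closed formula and collapsed, via Pochhammer identities, to $M_1,M_2,M_3$ (and to $N_1,N_2$ when $i=1$). The case $i=1$ is genuinely separate: the row index $i-1=0$ does not exist, so the ``lower-row'' contribution is absent and one instead invokes Proposition \ref{recurrencia xi general}($b$) together with a boundary entry of $K_n$, which is why $N_1,N_2$ take a different form.

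I expect the main obstacle to be the explicit identification of the structure constants — in particular the factor $i(n+1+\nu+i)_{i-2}-(n+\nu+i)$ inside $M_1$, and the combination $N_1$ — which requires a careful, somewhat lengthy computation with $K_n$, its inverse, and standard Pochhammer identities, together with meticulous bookkeeping of which near-diagonal entries of $R_m$ vanish and of the $x^0$/$x^1$ split, since several contributions cancel against one another. An equivalent route replaces the $\mathcal{D}$-identity by the three-term recurrence $xQ_n=Q_{n+1}+B_nQ_n+C_nQ_{n-1}$ rewritten in the $R$-variable, after substituting the expression for $B_n$ from Corollary \ref{Coro formula A0n}; this trades the use of $M$ for the identities of the preceding section but leaves the same coefficient computation.
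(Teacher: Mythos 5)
Your proposal follows essentially the same route as the paper: part ($a$) via $R(0,0)=K_0^{-1}$ and the $n=0$ case of the $\mathcal{D}$-identity, and parts ($b$),($c$) by conjugating $P_n\cdot\mathcal{D}=M\cdot P_n$ with $K_n$, evaluating at $x=0$ (your $x^0$ coefficient — the left side $x(R_n'-R_n)$ vanishes there, so the $x^1$ comparison is not needed), taking the $(i,n+i)$ entry, and eliminating the interior constants $\xi(n+1,i,n+i)$ and $\xi(n,i+1,n+i)$ with Proposition \ref{recurrencia xi general}, which is exactly how $G(n+1)_{i,i}$ and $(I(n))_{i,i+1}G(n)_{i+1,i+1}$ enter; the remaining work of identifying $M_1$ via $(K_n^{-1}K_{n+1})_{i,i-1}=-a_{i-1}\,i\,(n+1+\nu+i)_{i-2}$ is the same Pochhammer computation the paper carries out. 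The approach and all key ingredients match.
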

	
	\begin{proof}
		The first assertion of item ($a$) it follows from item ($a$) of \ref{recurrencia xi general} (see Remark \ref{rem item a}). Indeed, since $K_n^{-1}$ has a diagonal of $1$'s, in this case we have that
		$$\xi(0,i,i)= (K_0^{-1})_{i,i}\tfrac{0!}{(\nu+i+1)_0}=(K_0^{-1})_{i,i}=1.$$ 
		
		By Proposition \ref{prop cal D y D dag}, we have
		$\mathcal{D} = \partial_x x + x(A-1) $ and $M = (A-1)\delta-(n+1+\nu)-\mathcal{H}(n)J\mathcal{H}^{-1}(n)$ satisfies 
		$$P_n \cdot \mathcal{D} = M \cdot P_n.$$
		By recalling that $(P_n \cdot \mathcal{D})(0)=0$, 
		we obtain that 
		{\small $$(A-1)P(0,n+1)-(n+1+\nu)P(0,n)-\mathcal{H}_nJ\mathcal{H}^{-1}_n P(0,n)=0,$$ }
		since $P(0,m)=K_m R(0,m)$, we have that
		\begin{equation}\label{eq matrix IR}
			K_n^{-1}(A-1)K_{n+1} R(0,n+1)-(n+1+\nu)R(0,n)-I(n) R(0,n)=0
		\end{equation}
		where $I(n)=K_n^{-1}\mathcal{H}_n J\mathcal{H}_n ^{-1}K_n$. 
		
		If we consider the expression \eqref{eq matrix IR} with $n=0$, we arrive to
		\begin{equation*}
			0 = K_0^{-1}(A-1)K_{1} R(0,1)-K_0^{-1}(1+\nu)K_0 R(0,0)-I(0) R(0,0).
		\end{equation*}
		By taking the $(i,i+1)$-th coordinate, we obtain that
		\begin{equation*}
			0 = - R(0,1)_{i,i+1}-(1+\nu)R(0,0)_{i,i+1}-I(0)_{i,i} R(0,0)_{i,i+1}- I(0)_{i,i+1}R(0,0)_{i+1,i+1}.
		\end{equation*}
		From Theorem \ref{Coef R Laguerre} we have 
		\begin{equation*}
			\xi(1,i,i+1)= I(0)_{i,i+1}\xi(0,i+1,i+1)=I(0)_{i,i+1}.
		\end{equation*}
		as asserted.
		
		\medskip 
		
		Now, let us consider $n>0$ in the equation \eqref{eq matrix IR}. 
		By taking into account the $(i,n+i)$-coordinate in \eqref{eq matrix IR},
		
		{\small $$	\sum_{r=1}^{N}{(K_n^{-1}(A-1)K_{n+1})}_{i,r} {R(0,n+1)}_{r,n+i}-(n+1+\nu) {R(0,n)}_{i,n+i}-\sum_{r=1}^{N}{I(n)}_{i,r} {R(0,n)}_{r,n+i}=0.$$ }
		By Lemma \eqref{diag GI} and since ${R(0,n+1)}_{r,n+i}= 0$ for $r<i-1$ we have that 
		{\small \begin{equation*}\label{formula diag}
				\sum_{r=i-1}^{N}{(K_n^{-1}(A-1)K_{n+1})}_{i,r} {R(0,n+1)}_{r,n+i}-(n+1+\nu+i) {R(0,n)}_{i,n+i}-{I(n)}_{i,i+1} {R(0,n)}_{i+1,n+i}=0.
		\end{equation*} }
		
		On the other hand, since the matrix $K_n^{-1}(A-1)K_{n+1}$ is lower triangular we obtain that
		{\small $$	\sum_{r=i-1}^{N}(K_n^{-1}(A-1)K_{n+1})_{i,r} {R(0,n+1)}_{r,n+i}= \sum_{r=i-1}^{i}(K_n^{-1}(A-1)K_{n+1})_{i,r} {R(0,n+1)}_{r,n+i}.$$ }
		By taking into account that
		$(K_n^{-1}(A-1)K_{n+1})_{i,i-1}= a_{i-1} -(K_n^{-1}K_{n+1})_{i,i-1}$ and  \break 
		$ (K_n^{-1}(A-1)K_{n+1})_{i,i}=-1$,
		we obtain that
		\begin{eqnarray*}
			0 &=& \big(a_{i-1} -(K_n^{-1}K_{n+1})_{i,i-1}\big) R(0,n+1)_{i-1,n+i}- R(0,n+1)_{i,n+i}  \\
			&-&(n+1+\nu+i){R(0,n)}_{i,n+i}-{I(n)}_{i,i+1} {R(0,n)}_{i+1,n+i}. 
		\end{eqnarray*}
		By Corollary \ref{coro R(0,n)} we have
		\begin{eqnarray*}
			0 &=& \big(a_{i-1} -(K_n^{-1}K_{n+1})_{i,i-1}\big) \xi(n+1,i-1,n+i) - (n+1+\nu+i) \xi(n+1,i,n+i) \\
			&-&(n+1+\nu+i) \xi(n,i,n+i) -{I(n)}_{i,i+1} (n+\nu +i+1)\xi(n,i+1,n+i).
		\end{eqnarray*}
		
		On the other hand, by Proposition \ref{recurrencia xi general} if $n+i-j>0$ we know that 
		$$\xi(n,i,j) = a_{i-1}\xi(n,i-1,j) +\tfrac{(G(n))_{i,i} }{ n+\nu+i} \xi(n-1,i,j).$$
		Thus, in particular we obtain 
		$$\xi(n+1,i,n+i) = a_{i-1}\xi(n+1,i-1,n+i) +\tfrac{(G(n+1))_{i,i} }{ n+1+\nu+i} \xi(n,i,n+i),$$
		$$\xi(n,i+1,n+i) = a_{i} \xi(n,i,n+i) + \tfrac{(G(n))_{i+1,i+1})}{n+\nu+i+1}) \xi(n-1,i+1,n+i)$$
		and so we have 
		{\small \begin{eqnarray*}
				& &	\big(a_{i-1} -{(K_n^{-1}K_{n+1})}_{i,i-1}\big) \xi(n+1,i-1,n+i)\\
				&&- (\nu+n+i+1) \Big(a_{i-1}\xi(n+1,i-1,n+i) +\tfrac{(G(n+1))_{i,i} }{ n+1+\nu+i} \xi(n,i,n+i)\Big)\\
				&&= (n+1+\nu+i) \xi(n,i,n+i) \\
				&&+ {I(n)}_{i,i+1} (n+1+\nu+i)\Big( a_{i} \xi(n,i,n+i) + \tfrac{(G(n))_{i+1,i+1}}{n+\nu+i+1} \xi(n-1,i+1,n+i)\Big).
		\end{eqnarray*}}
		Hence 
		
		\begin{equation*}
			M_{1}(n,i)\xi(n+1,i-1,n+i)= M_2(n,i)\xi(n,i,n+i)+M_3(n,i)\xi(n-1,i+1,n+i),
		\end{equation*}
		with $M_{1}(n,i)=-{(K_n^{-1}K_{n+1})}_{i,i-1} - a_{i-1}(\nu+n+i)$,  $M_{2}(n,i)=(G(n+1))_{i,i} + (n+\nu+i+1),$ and $M_3(n,i)=(I(n))_{i,i+1} G(n)_{i+1,i+1}.$
		To finish the proof, notice that since $K_n^{-1}$ and $K_{n+1}$ are both lower triangular matrix with $1$'s in the diagonal, we have that
		$$(K_{n}^{-1}K_{n+1})_{i,i-1}=(K_{n}^{-1})_{i,i-1}+(K_{n+1})_{i,i-1}\quad \text{and} \quad (K_{n}^{-1})_{i,i-1}=-(K_{n})_{i,i-1},$$
		by definition of $K_n$'s we have that
		$$(K_{n}^{-1}K_{n+1})_{i,i-1}=a_{i-1}\Big( (n+\nu+i)_{i-1}-(n+1+\nu+i)_{i-1}\Big)=-a_{i-1} i(n+1+\nu+i)_{i-2}.$$
		Hence, $$M_{1}(n,i)=a_{i-1}\Big( i(n+1+\nu+i)_{i-2}-(n+\nu+i)\Big),$$
		as asserted.
		
		For the case $i=1$, $j=n+1$ in the expression \eqref{eq matrix IR} we obtain
		{\small \begin{equation*}
				0 = - {R(0,n+1)}_{1,n+1}-(n+2+\nu) {R(0,n)}_{1,n+1}-{I(n)}_{1,2} {R(0,n)}_{2,n+1},
		\end{equation*} }
		then, from Corollary \ref{coro R(0,n)} we have
		{\small \begin{equation}\label{eq reemp}
				0 = - (\nu+2n+3)\xi(n+1,1,n+1)-(n+2+\nu) \xi(n,1,n+1)-{I(n)}_{1,2} (\nu+2n+2) \xi(n,2,n+1).
		\end{equation} }
		As a consequence of Proposition \ref{recurrencia xi general}	we have that	 
		$$\xi(n,i,j) = a_{i-1}\xi(n,i-1,j) +\tfrac{(G(n))_{i,i} }{ n+\nu+i} \xi(n-1,i,j) \quad \text{for $j<n+i$},$$
		which implies that	
		$$\xi(n,2,n+1) = a_{1}\xi(n,1,n+1) +\tfrac{(G(n))_{2,2} }{ n+\nu+2} \xi(n-1,2,n+1),$$
		and 
		$$	\xi(n,1,j) =\tfrac{(G(n))_{1,1} }{ n+\nu+1} \xi(n-1,1,j) \quad \text{for $j<n+1$}.$$ 
		In particular    $$\xi(n+1,1,n+1)=\tfrac{(G(n+1))_{1,1} }{ n+\nu+2} \xi(n,1,n+1).$$ 
		Thus, we obtain     
		$$				N_1(n,i) \xi(n,1,n+1)= N_2(n,i) \xi(n-1,2,n+1),$$
		with 
		{\small $$N_{1}(n,i)=	(\nu+2n+3) \tfrac{(G(n+1))_{1,1} }{ n+\nu+2} + (n+2+\nu) + {I(n)}_{1,2} (\nu+2n+2) a_1, \quad N_{2}(n,i)={I(n)}_{1,2} (\nu+2n+2)\tfrac{(G(n))_{2,2} }{ n+\nu+2}$$}
		
		as desired.     
	\end{proof}

	As a consequence of Propositions \ref{recurrencia xi general} and	\ref{formula j=n+i} we obtain the following result.
	\begin{thm}
		Let $n\ge 0$ and let $G(n),I(n)$ as in \eqref{GI}. 
		Then, all of the non-zero entries of $R(x,n)$ can be found in terms of $G(\ell)$ and $I(\ell)$  and the generalized Laguerre polynomials 
		$L^{(\alpha)}_{\ell}$ for $\ell=0,\ldots,n$.
	\end{thm}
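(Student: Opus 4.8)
The plan is to merge Propositions~\ref{recurrencia xi general} and~\ref{formula j=n+i} into a single induction on $n$ showing that every constant $\xi(n,i,j)$ with $1\le i,j\le N$ and $n+i-j\ge 0$ is determined by $G(0),\dots,G(n)$ and $I(0),\dots,I(n)$, together with the explicit data $\nu$, $\{a_k\}$ and the matrices $K_\ell$. Granting this, the theorem follows at once from Theorem~\ref{Coef R Laguerre}: since $R(x,n)_{i,j}=L^{(\nu+j)}_{n+i-j}(x)\,\xi(n,i,j)$ when $n+i-j\ge 0$ and $R(x,n)_{i,j}=0$ otherwise, each nonzero entry is an explicit constant multiple of a generalized Laguerre polynomial, the constant being expressed through the matrices $G(\cdot)$ and $I(\cdot)$.

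For the base, Proposition~\ref{recurrencia xi general}(a), which by Remark~\ref{rem item a} also applies on the diagonal $j=i$, gives $\xi(0,i,j)=(K_0^{-1})_{i,j}\tfrac{(i-j)!}{(\nu+j+1)_{i-j}}$ for $i\ge j$, and $\xi(0,i,j)=0$ for $i<j$ by \eqref{def extend xi}; so level $0$ is settled with no use of $G$ or $I$. At level $1$, Proposition~\ref{formula j=n+i}(a) gives the diagonal values $\xi(1,i,i+1)=I(0)_{i,i+1}$, and Proposition~\ref{recurrencia xi general}(b)--(c) with $n=1$ then produces the remaining $\xi(1,i,j)$, $j\le i$, from $G(1)$ and the level-$0$ constants, running $i=1,\dots,N$ in increasing order.

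Now fix $n\ge 2$ and assume the statement for all levels below $n$. I first recover the diagonal constants $\xi(n,m,n+m)$ for $m\ge 1$ with $n+m\le N$: Proposition~\ref{formula j=n+i}(c) with parameters $(n-1,m+1)$ reads
\begin{multline*}
M_1(n-1,m+1)\,\xi(n,m,n+m)\\
=M_2(n-1,m+1)\,\xi(n-1,m+1,n+m)+M_3(n-1,m+1)\,\xi(n-2,m+2,n+m),
\end{multline*}
whose right side involves only $G(n)$, $G(n-1)$, $I(n-1)$ and diagonal constants at levels $n-1$ and $n-2$, all known by induction; since
\begin{equation*}
M_1(n-1,m+1)=a_m\Big((m+1)\,(n+1+\nu+m)_{m-1}-(n+\nu+m)\Big)
\end{equation*}
is nonzero --- it equals $-a_1(n+\nu-1)$ for $m=1$, and for $m\ge 2$ the Pochhammer factor forces the bracket to be strictly positive --- this determines $\xi(n,m,n+m)$; in particular Proposition~\ref{formula j=n+i}(b) is not needed. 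I then recover the interior constants $\xi(n,i,j)$ with $j<n+i$ by a secondary induction on $i$: Proposition~\ref{recurrencia xi general}(b) gives $\xi(n,1,j)=\tfrac{(G(n))_{1,1}}{n+\nu+1}\xi(n-1,1,j)$, and Proposition~\ref{recurrencia xi general}(c) gives $\xi(n,i,j)=a_{i-1}\xi(n,i-1,j)+\tfrac{(G(n))_{i,i}}{n+\nu+i}\xi(n-1,i,j)$ for $i>1$, where $\xi(n,i-1,j)$ is a level-$n$ constant already obtained (an interior one with smaller first index, or, when $j=n+i-1$, a diagonal one from the previous step) and $\xi(n-1,i,j)$ is known by induction. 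This exhausts level $n$ and closes the induction.

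The one delicate point is the nonvanishing of the pivot $M_1(n-1,m+1)$: it is precisely because Proposition~\ref{recurrencia xi general} is silent on the diagonal $j=n+i$ that one must pass to the three-term relation of Proposition~\ref{formula j=n+i}(c) and solve it \emph{forward} for the level-$n$ value, which is legitimate only once this coefficient is seen to be invertible. Everything else is the bookkeeping of checking that each invoked identity consumes only already-produced quantities, in the order: $n=0$, $n=1$, then for $n\ge 2$ the diagonal constants before the interior ones and, within the interior, increasing $i$.
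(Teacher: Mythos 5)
Your proof is correct and rests on the same two ingredients as the paper's (Propositions \ref{recurrencia xi general} and \ref{formula j=n+i} together with Theorem \ref{Coef R Laguerre}), but it is organized in the opposite direction and is in fact more complete. The paper argues by descent: starting from $\xi(n,i,j)$ it applies items ($b$)--($c$) of Proposition \ref{recurrencia xi general} to lower the quantity $n+i-j$ until only diagonal constants $\xi(n',i',n'+i')$ remain, and then simply asserts that it ``is enough'' to determine those --- the proof stops there without actually carrying out that last step. You instead run an ascending induction on the level $n$, and the payoff is precisely at the point the paper leaves open: you extract the level-$n$ diagonal constants by solving the three-term relation \eqref{rec j=n+i} at parameters $(n-1,m+1)$ \emph{forward}, and you verify that the pivot $M_1(n-1,m+1)$ is nonzero (it is $-a_1(n+\nu-1)$ for $m=1$ and has a strictly positive bracket for $m\ge 2$, given $n\ge2$ and $\nu>0$), which is exactly the invertibility the paper's argument silently needs. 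The one caveat is that your pivot argument assumes $a_m\neq 0$; the paper never states this for the matrix $A$ in \eqref{A J}, though it is implicit throughout (e.g.\ in the formula for $K_n$ and in Section 7 where $a_k=-\mu_{k+1}/\mu_k$ with $\mu_k\neq 0$), and if some $a_m=0$ the relation degenerates into a constraint rather than a determination. Your bookkeeping of which quantities are consumed at each stage (diagonal before interior, increasing $i$ within a level, only $G(\ell),I(\ell)$ with $\ell\le n$) is consistent with the theorem's statement and, if anything, sharper than the paper's own claim of needing $\ell$ up to $n+1$.
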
 
	\begin{proof}
		The equality \eqref{lagpol} implies that
		$(R(x,n))_{i,j}=L^{(\nu+j)}_{n+i-j}(x)\xi(n,i,j)$ for $n+i-j\ge 0$. 
		It is enough to show that all of the non-zero constants $\xi(n,i,j)$ can be obtained in terms of $G(\ell)$ and $I(\ell)$ for $\ell=0,\ldots,n+1$.
		
		By items ($a$)'s of Propositions \ref{recurrencia xi general} and \ref{formula j=n+i} we obtain the values of $\xi(0,i,j)$. 
		Thus, assume that $n>0$ and suppose that we want to determine $\xi(n,i,j)$ with $n+i-j>0$. 
		Notice that each time that we use items ($b$) and ($c$) of Proposition \ref{recurrencia xi general}, 
		the value of "$n+i-j$" it reduces by $1$, so if we use these items inductively we obtain that 
		$\xi(n,i,j)$ can be determined by the values of somes $\xi(n',i',j')$'s with $n'+i'-j'=0$  and in each step also appear $G(\ell)$ with $\ell=0,\ldots,n$.
		So, it is enough to see that we can determine $\xi(n,i,j)$ with $n+i-j=0$, in terms of $G(\ell)$ and  $I(\ell)$.
	\end{proof}
	
	\begin{rmk} 
		Notice that by Lemma \ref{diag GI} and Propositions \ref{recurrencia xi general}, \ref{formula j=n+i}, 
		we can replace $(G(n))_{i,i}$ and $(I(n))_{i,i+1}$ by the expressions $(\mathcal{H}_n(A^{\ast}-1)\mathcal{H}_{n-1}^{-1}))_{i,i}$ and $(\mathcal{H}_n J \mathcal{H}_n^{-1})_{i,i+1}$, respectively.
	\end{rmk}
	
	As a direct consequence of the above results and the equations obtained in Proposition \ref{prop infinitas eq}, 
	we obtain a three-terms non-linear recursion for $\mathcal{H}_{n}$. With this in mind, we need the following lemma.
	
	\begin{lem}\label{recH1}
		Let $A,J\in M_{N}(\mathbb{C})$ be matrices as in \eqref{A J}.
		Then, the matrix $\mathcal{H}_1$ can be obtained from $\mathcal{H}_0$.
	\end{lem}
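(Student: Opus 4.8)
The plan is to derive a closed formula for $\mathcal{H}_1$ in terms of $\mathcal{H}_0$ by specializing the identities of Corollary~\ref{Coro formula A0n} to the smallest admissible values of $n$. First I record the initial data of the monic MVOPs $P(x,n)$: since $P(x,0)=I$ one has $X(0)=0$, and hence by \eqref{B C} we get $B(0)=X(0)-X(1)=-X(1)$, that is $X(1)=-B(0)$. I also note that, since $A=\sum_{k=1}^{N-1}a_k E_{k+1,k}$ is strictly lower triangular (hence nilpotent), both $A-1$ and $A^{\ast}-1$ are triangular matrices with $-1$ on the diagonal, and therefore invertible.

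Next I apply \eqref{fla A0n} with $n=0$. Substituting $X(0)=0$ and $X(1)=-B(0)$ turns it into
\begin{equation*}
(A-1)B(0)=-(1+\nu)I-\mathcal{H}_0\,J\,\mathcal{H}_0^{-1},
\end{equation*}
so that, inverting $A-1$,
\begin{equation*}
B(0)=(A-1)^{-1}\bigl(-(1+\nu)I-\mathcal{H}_0\,J\,\mathcal{H}_0^{-1}\bigr)
\end{equation*}
is completely determined by $\mathcal{H}_0$ (together with the fixed data $A,J,\nu$).

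Finally I apply \eqref{fla Ad-1n} with $n=1$, which reads $X(1)+[J,X(1)]=\mathcal{H}_1(A^{\ast}-1)\mathcal{H}_0^{-1}$. Using $X(1)=-B(0)$ and inverting $A^{\ast}-1$ gives
\begin{equation*}
\mathcal{H}_1=-\bigl(B(0)+[J,B(0)]\bigr)\,\mathcal{H}_0\,(A^{\ast}-1)^{-1},
\end{equation*}
and together with the previous display this exhibits $\mathcal{H}_1$ as an explicit expression in $\mathcal{H}_0$, which is exactly the assertion.

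The argument is essentially bookkeeping, so I do not expect a serious obstacle; the only points that need care are the invertibility of $A-1$ and $A^{\ast}-1$ — immediate from the nilpotency of $A$ in \eqref{A J} — and the fact that \eqref{fla Ad-1n} must be invoked at $n=1$, where $\mathcal{H}_0^{-1}$ occurs, rather than at $n=0$, where the meaningless $\mathcal{H}_{-1}^{-1}$ would appear. As a sanity check, in the scalar case $N=1$ one has $A=0$ and $J=1$, whence $B(0)=\nu+2$ and $\mathcal{H}_1=(\nu+2)\,\mathcal{H}_0$, in agreement with the ratio of consecutive squared norms of the monic Laguerre polynomials with parameter $\nu+1$. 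One could alternatively start from the identities of Proposition~\ref{prop infinitas eq}, but those involve $\mathcal{H}_{n-1}^{-1}$ and introduce the extra unknown $B_1$, so the route through Corollary~\ref{Coro formula A0n} is the more direct one.
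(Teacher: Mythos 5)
Your argument is correct, and it is genuinely different from (and considerably shorter than) the proof in the paper. Both proofs end the same way, namely by solving \eqref{fla Ad-1n} at $n=1$ for $\mathcal{H}_1=\bigl(X(1)+[J,X(1)]\bigr)\mathcal{H}_0(A^{\ast}-1)^{-1}$, so everything hinges on expressing $X(1)$ through $\mathcal{H}_0$. The paper does this by applying the second-order operator $D$ to $P(x,1)$ and using $P_1\cdot D=\Gamma_1\cdot P_1$, which yields entrywise recurrences \eqref{rec X1 i1}--\eqref{rec X1 i>1} for $X(1)_{i,j}$; it then needs an auxiliary induction showing $X(1)_{i,j}=0$ for $i+1<j$ and the identification $X(1)_{i,i+1}=\xi(1,i,i+1)=(\mathcal{H}_0J\mathcal{H}_0^{-1})_{i,i+1}$ from Proposition \ref{formula j=n+i} to seed that recursion. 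You instead observe that \eqref{fla A0n} at $n=0$, combined with $X(0)=0$ and $B(0)=-X(1)$, reads $(A-1)X(1)=(1+\nu)+\mathcal{H}_0J\mathcal{H}_0^{-1}$, and that $A-1$ is invertible because $A$ is nilpotent; this gives the closed form $X(1)=(A-1)^{-1}\bigl((1+\nu)+\mathcal{H}_0J\mathcal{H}_0^{-1}\bigr)$ in one stroke, with no need for the operator $D$, the vanishing claim, or the constants $\xi$. Your scalar sanity check ($N=1$: $\mathcal{H}_1=(\nu+2)\mathcal{H}_0$, matching the monic Laguerre norms with parameter $\nu+1$) is consistent with both routes. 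The only point worth flagging is that \eqref{fla A0n} is invoked at $n=0$, where $X(0)$ must be read as $0$ by convention; this is harmless, since the underlying identity $P\cdot\mathcal{D}=M\cdot P$ of Proposition \ref{prop cal D y D dag} holds at $n=0$ and reduces there exactly to $(A-1)X(1)=(1+\nu)+\mathcal{H}_0J\mathcal{H}_0^{-1}$. Your approach buys an explicit matrix formula for $\mathcal{H}_1$; the paper's approach, while longer, produces the entrywise structure of $X(1)$ (its band shape and superdiagonal entries) that is reused elsewhere in Section 6.
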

	\begin{proof}
		By \eqref{fla Ad-1n}, we have that
		$$\mathcal{H}_1=\big(X(1)+[J,X(1)]\big)\mathcal{H}_0 (A^*-1)^{-1}.$$
		So, it is enough to show that we can obtain $X(1)$ in terms of $\mathcal{H}_0$.
		
		In order to show this, recall that $P(x,1)=x+ X(1)$. 
		Let us put $P_1(x):=P(x,1)$, by applying the operator $D=\partial_{x}^2 x +\partial_x ( (A-1)x +1+\nu+J) +A\nu + JA -J$ to $P_1(x)$, we have that
		$$(P_1\cdot D)(x)=  (A-1)x +1+\nu+J + (x+X(1))(A\nu + JA -J),$$
		by taking into account that $P_1\cdot D= \Gamma_1 \cdot P_1$, where $\Gamma_1$ is the discrete constant operator $A(\nu+2+J)-1-J$. 
		So we obtain that
		$$(A-1)x +1+\nu+J + (x+X(1))(A\nu + JA -J)= \big(A(\nu+2+J)-1-J\big)(x+X(1)).$$
		After some computation, we obtain that
		$$X(1)(A\nu + JA -J) +1+\nu+J = \big(A(\nu+2+J)-1-J\big)X(1),$$
		by taking into account that $[J,A]=A$, we obtain that $JA=A+AJ$ and hence
		$$[X(1),A(1+\nu+J) ] +X(1)+[J,X(1)] + 1+\nu+J -AX(1)=0.$$
		Thus, by seeing the $(i,j)$-th coordinate in the above equality we obtain the following recurrences
		{\small \begin{equation}\label{rec X1 i1}
				a_j(\nu+j+1) X(1)_{1,j+1}+ (2-j) X(1)_{1,j}+ (1+\nu+J)_{1,j}=0,
		\end{equation}}
		{\small \begin{equation}\label{rec X1 i>1}
				a_j(\nu+j+1) X(1)_{i,j+1}- a_{i-1}(\nu+i+1)X(1)_{i-1,j} + (1+i-j) X(1)_{i,j}+ (1+\nu+J)_{i,j}=0 \quad \text{for $i\ge 2$}.
		\end{equation}}
		
		Recall that by Theorem \ref{Coef R Laguerre} we have that 
		$(R(0,1))_{i,j}=0$ if $i+1< j$.
		
		\medskip
		
		{\textbf{Claim}:  If $i+1<j$ then $X(1)_{i,j}=0$.}
		
		We are going to prove this assertion by induction on $i$.
		Assume first that $i=1$ and let us consider  $j>2$,  in this case
		$$ 0=(R(0,1))_{1,j}= (K_1 P(0,1))_{1,j}=\sum_{r=1}^{N}(K_1)_{1,r} X(1)_{r,j}=X(1)_{1,j}.  $$
		
		Now, let $i>1$ and assume that the statement is true for $r<i$, that is $X(1)_{r,j}=0$ when $r+1<j$. 
		Thus, if $i+1<j$ then
		$$ 0=(R(0,1))_{i,j}= (K_1 P(0,1))_{i,j}=\sum_{r=1}^{N}(K_1)_{i,r} X(1)_{r,j}= (K_1)_{i,i}X(1)_{i,j}=X(1)_{i,j}.  $$
		By induction hypothesis and by taking into account that $K_1$ is a lower triangular matrix with $1$'s  in its diagonal, 
		we have that 
		$$\sum_{r=1}^{N}(K_1)_{i,r} X(1)_{r,j}=\sum_{r=1}^{i}(K_1)_{i,r} X(1)_{r,j}= (K_1)_{i,i}X(1)_{i,j}=X(1)_{i,j}.$$
		Therefore, $X(1)_{i,j}=0$ when $i+1<j$ as claimed.
		
		Hence, by equations \eqref{rec X1 i1} and \eqref{rec X1 i>1}, we can observe by a recursive argument, that in order to compute $X(1)$, 
		it is enough to know the value of $X(1)_{i,i+1}$ for any $i\geq 1$.
		By taking into account that 
		$$(R(0,1))_{i,i+1}=(K_1 P(0,1))_{i,i+1}=\sum_{r=1}^{i}(K_1)_{i,r} (X(1))_{r,i+1}=X(1)_{i,i+1},$$
		thus by Corollary \ref{coro R(0,n)}, we obtain that
		$$X(1)_{i,i+1}=(R(0,1))_{i,i+1}=\xi(1,i,i+1).$$
		Thus, by item ($a$) of Proposition \ref{formula j=n+i}, we obtain that 
		$$X(1)_{i,i+1}=\xi(1,i,i+1)=I(0)_{i,i+1}= (\mathcal{H}_0 J \mathcal{H}_0^{-1})_{i,i+1}.$$
	\end{proof}
	
	\begin{prop}\label{recHn}
		Let $A,J\in M_{N}(\mathbb{C})$ be matrices as in \eqref{A J}.
		Then,
		\small{\begin{align*}
				\mathcal{H}_{n+2} = (A-1)^{-2}\Big( (-[J,\mathcal{H}_n J \mathcal{H}_n^{-1}] - \mathcal{H}_n (A^{t}-1) \mathcal{H}_{n-1}^{-1} (A-1) + (A-1) \mathcal{H}_{n+1} (A^{t}-1) \mathcal{H}_n^{-1})(A-1) \\
				-2-\mathcal{H}_{n+1} J \mathcal{H}_{n+1}^{-1} + \mathcal{H}_n J \mathcal{H}_n^{-1} +(A-1)[J,\mathcal{H}_{n+1} J \mathcal{H}_{n+1}^{-1}]+(A-1)\mathcal{H}_{n+1}(A^{t}-1)\mathcal{H}_{n}^{-1}(A-1) \Big)\mathcal{H}_{n+1} (A^{t}-1)^{-1},
		\end{align*}}
		where $B_n,C_n$ and $\mathcal{H}_n$ be as in \eqref{BCH}. 
		
		Moreover,
		$$(\mathcal{H}_0)_{i,j} = \Gamma(\nu) \sum_{r=1}^{\min\{i,j\}}  \frac{\delta_r^{(\nu)}}{(i-r)!(j-r)!}\Big(\prod_{k=r}^{i-1} a_k \Big) \Big(\prod_{s=r}^{j-1} a_s\Big) (\nu)_{i+j-r},$$
		where $\Gamma$ is the Gamma function.
	\end{prop}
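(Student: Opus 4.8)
The plan is to prove the two assertions separately; neither needs a new idea. The recursion for $\mathcal{H}_{n+2}$ comes from eliminating the sequence $B_n$ between the identities \eqref{ML.1} and \eqref{MMd0} of Proposition~\ref{prop infinitas eq}. Right-multiplying \eqref{MMd0} by $A-1$ gives a closed expression for $B_n(A-1)$, and writing \eqref{MMd0} at level $n+1$ and left-multiplying by $A-1$ gives
$$(A-1)B_{n+1} = -(A-1)[J,\mathcal{H}_{n+1} J \mathcal{H}_{n+1}^{-1}] - (A-1)\mathcal{H}_{n+1} (A^{t}-1) \mathcal{H}_{n}^{-1} (A-1) + (A-1)^2 \mathcal{H}_{n+2} (A^{t}-1) \mathcal{H}_{n+1}^{-1}.$$
Substituting both expressions into \eqref{ML.1} cancels every occurrence of $\mathcal{H}_{n+2}$ except the single term $(A-1)^2\mathcal{H}_{n+2}(A^{t}-1)\mathcal{H}_{n+1}^{-1}$. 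Since $A$ is strictly lower triangular, $A-1$ and $A^{t}-1$ are invertible, so we may multiply on the left by $(A-1)^{-2}$ and on the right by $\mathcal{H}_{n+1}(A^{t}-1)^{-1}$ and rearrange to obtain the displayed formula for $\mathcal{H}_{n+2}$. Combined with Lemma~\ref{recH1}, which produces $\mathcal{H}_1$ from $\mathcal{H}_0$, and with the value of $\mathcal{H}_0$ computed below, this pins down the whole sequence $\{\mathcal{H}_n\}$.

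For $\mathcal{H}_0$ we use $\mathcal{H}_0=\langle P(\cdot,0),P(\cdot,0)\rangle=\int_0^\infty W^{(\nu)}(x)\,dx$, since $P(x,0)$ is the identity matrix; thus it suffices to integrate $W^{(\nu)}$ entrywise. Because $A=\sum_{k=1}^{N-1}a_k E_{k+1,k}$ is nilpotent with $(A^m)_{i,j}=\prod_{l=j}^{i-1}a_l$ for $i-j=m$ and zero otherwise, one has $(e^{Ax})_{i,k}=\tfrac{x^{i-k}}{(i-k)!}\prod_{l=k}^{i-1}a_l$ for $i\ge k$ (and $0$ for $i<k$), and likewise $(e^{A^{\ast}x})_{k,j}=\tfrac{x^{j-k}}{(j-k)!}\prod_{l=k}^{j-1}a_l$ for $j\ge k$ since the $a_l$ are real. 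Multiplying the three factors in $e^{Ax}\,T^{(\nu)}(x)\,e^{A^{\ast}x}$ and reading off the $(i,j)$ entry yields
$$(W^{(\nu)}(x))_{i,j}=\sum_{r=1}^{\min\{i,j\}}\frac{\delta^{(\nu)}_r}{(i-r)!\,(j-r)!}\Big(\prod_{k=r}^{i-1}a_k\Big)\Big(\prod_{s=r}^{j-1}a_s\Big)\,e^{-x}x^{\nu+i+j-r}.$$
Integrating each summand with the Gamma integral $\int_0^\infty e^{-x}x^{s}\,dx=\Gamma(s+1)$ and rewriting the result through the Pochhammer symbol via $\Gamma(\nu+m)=\Gamma(\nu)(\nu)_m$ gives the stated closed form for $(\mathcal{H}_0)_{i,j}$.

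I expect the only obstacle to be bookkeeping rather than substance. In the elimination step the delicate point is keeping track of the precise position of each non-commuting factor ($A-1$, $A^{t}-1$, $\mathcal{H}_n$, $\mathcal{H}_n^{-1}$), since a single misplaced factor alters the right-hand side; one should verify at the end that the surviving terms reassemble exactly into the claimed expression. In the $\mathcal{H}_0$ computation the only subtlety is the combinatorics of the products $\prod a_l$ generated by the two matrix exponentials, together with getting the index of the Pochhammer symbol right when converting the Gamma values into the form quoted in the statement.
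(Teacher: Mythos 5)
Your proposal is correct and follows essentially the same route as the paper: the recursion for $\mathcal{H}_{n+2}$ is obtained by eliminating $B_n$ and $B_{n+1}$ between \eqref{ML.1} and \eqref{MMd0} (the paper first solves \eqref{ML.1} for $B_{n+1}$ and then substitutes, but the algebra is identical to your right/left multiplications by $A-1$), and $\mathcal{H}_0$ is computed entrywise from the explicit matrix exponentials and the Gamma integral exactly as in the paper. The only caveat is one you inherit from the statement itself: the conversion $\Gamma(\nu+i+j-r+1)=\Gamma(\nu)\,(\nu)_{i+j-r+1}$ produces Pochhammer index $i+j-r+1$ rather than the displayed $i+j-r$, so your literal computation is right while the claim that it ``gives the stated closed form'' quietly absorbs that off-by-one.
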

	\begin{proof}
		By \eqref{ML.1}, we have that
		\begin{equation}\label{eq Bn+1}
			B_{n+1} =(A-1)^{-1}\Big(B_n (A-1) - 2 - \mathcal{H}_{n+1} J \mathcal{H}_{n+1}^{-1} + \mathcal{H}_n J \mathcal{H}_n^{-1}\Big).
		\end{equation}
		On the other hand, the equation \eqref{MMd0} implies
		$$	B_n = -[J,\mathcal{H}_n J \mathcal{H}_n^{-1}] - \mathcal{H}_n (A^{t}-1) \mathcal{H}_{n-1}^{-1} (A-1) + (A-1) \mathcal{H}_{n+1} (A^{t}-1) \mathcal{H}_n^{-1}.$$
		$$B_{n+1} = -[J,\mathcal{H}_{n+1} J \mathcal{H}_{n+1}^{-1}] - \mathcal{H}_{n+1} (A^{t}-1) \mathcal{H}_{n}^{-1} (A-1) + (A-1) \mathcal{H}_{n+2} (A^{t}-1) \mathcal{H}_{n+1}^{-1}$$
		The statement it follows by changing $B_{n+1}$, and $B_n$ in \eqref{eq Bn+1}.
		
		For the last assertion, by definition we have that
		{\small $$\mathcal{H}_0= \int_{0}^{\infty} e^{xA} T^{(\nu)}(x)e^{xA^{\ast}} dx  \quad \text{ with } \quad T^{(\nu)}(x)=e^{-x} \sum_{k=1}^{N}\delta_k^{(\nu)}x^{\nu+k}E_{k,k}.$$} 
		Then,
		$$(\mathcal{H}_0)_{i,j} = \sum_{r=1}^{N}\delta_r^{(\nu)} \int_0^{\infty} e^{-x} x^{(\nu+r)} (e^{xA})_{i,r} (e^{xA^{\ast}})_{r,j}.$$
		By taking into account that $A^{\ast}_{r,j}=A_{j,r}$, and $(e^{xA})_{i,r}=\tfrac{1}{(i-r)!}\big((xA)^{i-r}\big)_{i,r}$ if $r< i$ and 
		$0$ otherwise, we have
		{\small$$ (\mathcal{H}_0)_{i,j} = \sum_{r=1}^{\min\{i,j\}}\delta_r^{(\nu)} \int_0^{\infty} e^{-x} x^{(\nu+r)} \tfrac{\big((xA)^{i-r}\big)_{i,r}}{(i-r)!} \tfrac{\big((xA)^{j-r}\big)_{j,r}}{(j-r)!} dx = \sum_{r=1}^{\min\{i,j\}}\delta_r^{(\nu)} \tfrac{(A^{i-r})_{i,r}}{(i-r)!} \tfrac{(A^{j-r})_{j,r}}{(j-r)!} \int_0^{\infty} e^{-x} x^{(\nu+i+j-r)} dx.$$}
		Notice that $(A^{i-r})_{i,r}= \prod_{k=r}^{i-1}a_k$, and recall that $\Gamma(z+1)=\int_0^{\infty} e^{-x} x^{z} dx,$ we can write 
		$$(\mathcal{H}_0)_{i,j}= \sum_{r=1}^{\min\{i,j\}} \frac{\delta_r^{(\nu)}}{(i-r)!(j-r)!}\Big(\prod_{k=r}^{i-1} a_k \Big) \Big(\prod_{s=r}^{j-1} a_s\Big) \Gamma(\nu+i+j-r+1).$$
		
		Taking into account that $\Gamma(z+1)=z\Gamma(z)$ we obtain that
		$$(\mathcal{H}_0)_{i,j} = \Gamma(\nu) \sum_{r=1}^{\min\{i,j\}}  \frac{\delta_r^{(\nu)}}{(i-r)!(j-r)!}\Big(\prod_{k=r}^{i-1} a_k \Big) \Big(\prod_{s=r}^{j-1} a_s\Big) (\nu)_{i+j-r},$$
		as asserted.
	\end{proof}
	
	\begin{rmk}
		Notice that we also have the expression 
		$$\mathcal{H}_0 = (L^{(\nu)}_{\mu}(0))^{-1} \mathcal{H}^{(\nu,\nu)}_0 (L^{(\nu)}_{\mu}(0))^{\ast},$$ 
		with $L^{(\nu)}_{\mu}(0)$ and  $\mathcal{H}^{(\nu,\nu)}_0$ defined 
		in \cite{koelink2019matrix}, where 
		$$\mu=(\mu_1, \ldots, \mu_N) \quad \text{such that } a_i=\tfrac{\mu_{i+1}}{\mu_i}\quad \text{for $i=1, \ldots, N-1$}.$$
		This assertion can be deduced from $W^{\nu}(x)= (L^{(\nu)}_{\mu}(0))^{-1} W^{(\nu,\nu)}(x) (L^{(\nu)}_{\mu}(0))^{\ast}$, where $W^{(\nu,\nu)}(x)$ are as in \cite{koelink2019matrix}. 
		They also proved that  $\mathcal{H}^{(\nu,\nu)}_0$ is a diagonal matrix.
	\end{rmk}

	The following result gives a recursion for $X(n)$'s in terms of $I(n)$.
	
	\begin{prop}
		For $n \geq 1$, let $I(n)$ as in \eqref{GI}.
		If $\delta_{i,j}$ denotes the kronecker delta function, then: 
		\begin{enumerate}[($a$)]
			\item $n \delta_{1,j} + X(n)_{1,j+1} a_j - X(n)_{1,j} = -(n+1+\nu)\delta_{1,j} - I(n)_{1,j}$.
			
			\item  $n \delta_{i,j} + X(n)_{i,j+1} a_j - a_{i-1} X(n+1)_{i-1,j} - X(n)_{i,j} + X(n+1)_{i,j} = -(n+1+\nu)\delta_{i,j} - I(n)_{i,j}$ for $i\ge 2$.
		\end{enumerate}
	\end{prop}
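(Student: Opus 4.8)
The claim is just equation \eqref{fla A0n} of Corollary~\ref{Coro formula A0n} written out in coordinates, so that is where I would begin. The relation $B(n)=X(n)-X(n+1)$ of \eqref{B C} turns \eqref{fla A0n} into the single matrix identity
$$n+X(n)A-X(n)-AX(n+1)+X(n+1)=-(n+1+\nu)-\mathcal{H}_nJ\mathcal{H}_n^{-1}.$$
One may also obtain this from scratch by comparing the coefficient of $x^n$ on the two sides of $P_n\cdot\mathcal{D}=M\cdot P_n$: with $P(x,n)=x^n+X(n)x^{n-1}+\cdots$ and $\mathcal{D}=\partial_x x+x(A-1)$, so that $(P_n\cdot\mathcal{D})(x)=xP'(x,n)+xP(x,n)(A-1)$, and with $M=(A-1)\delta-(n+1+\nu)-\mathcal{H}_nJ\mathcal{H}_n^{-1}$ from Proposition~\ref{prop cal D y D dag}, the coefficient of $x^n$ is $n+X(n)(A-1)$ on the left and $(A-1)X(n+1)-(n+1+\nu)-\mathcal{H}_nJ\mathcal{H}_n^{-1}$ on the right, which rearranges to the displayed identity.

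Next I would read off the $(i,j)$-entry of this matrix identity. Since $A=\sum_k a_kE_{k+1,k}$, right multiplication by $A$ produces $(X(n)A)_{i,j}=X(n)_{i,j+1}a_j$, while left multiplication by $A$ produces $(AX(n+1))_{i,j}=a_{i-1}X(n+1)_{i-1,j}$ when $i\ge 2$ and nothing at all when $i=1$, because the row index $1$ has no predecessor; this is exactly the dichotomy between case~($a$) and case~($b$). On the right-hand side one meets the $(i,j)$-entry of $\mathcal{H}_nJ\mathcal{H}_n^{-1}$, which I would rewrite via $I(n)=K_n^{-1}\mathcal{H}_nJ\mathcal{H}_n^{-1}K_n$ using Lemma~\ref{diag GI} (together with the lower-unitriangular shape of $K_n$ from Lemma~\ref{lem Kn}); this produces the two displayed recursions.

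The whole computation is routine; the only points that need a little care are the boundary row $i=1$, where one must remember that $A$ contributes nothing on the left, and the passage from the entries of $\mathcal{H}_nJ\mathcal{H}_n^{-1}$ to those of $I(n)$ via Lemma~\ref{diag GI}. I do not expect any substantial obstacle: the proposition is nothing more than \eqref{fla A0n} transcribed into coordinates after eliminating $B(n)$ by \eqref{B C}.
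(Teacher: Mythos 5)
Your proposal is correct and is essentially identical to the paper's own (very short) proof: both simply take the $(i,j)$-entry of \eqref{fla A0n} after eliminating $B(n)$ via \eqref{B C} and using the shape of $A=\sum_k a_kE_{k+1,k}$ to split off the boundary row $i=1$. If anything you are slightly more careful than the paper, which silently writes $I(n)_{i,j}$ where the coordinate computation literally produces $(\mathcal{H}_nJ\mathcal{H}_n^{-1})_{i,j}$; your remark that one must pass between these via $I(n)=K_n^{-1}\mathcal{H}_nJ\mathcal{H}_n^{-1}K_n$ and Lemma~\ref{diag GI} flags the only point the paper leaves implicit.
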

	
	\begin{proof}
		By Corollary \ref{Coro formula A0n}, we have that
		$$n+X(n)A - A X(n+1)-B(n) = -(n+1+\nu)-\mathcal{H}(n)J\mathcal{H}^{-1}(n).$$
		The result is a direct consequense of taking $(i,j)$-th coordinate in the above equation.
	\end{proof}
	
	\begin{rmk}
		\begin{enumerate}[($a$)]
			\item	By item $(a)$ of Corollary \ref{Coro formula A0n} we can obtain that $G(n)_{i,i}=X(n)_{i,i}$ for any $i \geq 1$.
			\item	In order to compute $P(x,n)$, for a given $n$, we can do the following:
			\begin{enumerate}
				\item Compute $\mathcal{H}_{n}$ and the explicit form of $\mathcal{H}_0,\, \mathcal{H}_1$ 
				using Lemma \ref{recH1} and Proposition \ref{recHn}.
				\item Compute $G(n)$ and $I(n)$ using $\mathcal{H}_{n}$.
				\item Compute the $\xi(n,i,j)$ using the recursions of Propositions \ref{recurrencia xi general} and \ref{formula j=n+i}. 
				\item  Then we have $R(x,n)$ and so $P(x,n)=K_n R(x,n) e^{-xA}$.
			\end{enumerate}
		\end{enumerate}
	\end{rmk}

	\section{Matrix entries of $P(x,n)$ in terms of Laguerre and dual Hahn polynomials.}
	
	In this section, we will show that under some hypothesis, the $\xi(n,i,j)$'s can be expressed in terms of dual Hahn polynomials. 
	
	\medskip
	
	\subsection*{ Some technical lemmas}
	Let $N\geq 1$ be a fixed integer and let $\mu=(\mu_1,\ldots,\mu_N)$ be a sequence of non-zero coefficients and $\alpha>0$.
	Then $L_{\mu}^{(\al)}$ is the $N\times N$ unipotent lower triangular matrix defined by
	\begin{equation}
		\label{eq:matrix-L-Laguerre}
		L^{(\alpha)}_{\mu}(x)_{m,n}=\begin{cases} \frac{\mu_{m}}{\mu_{n}} L^{(\alpha+n)}_{m-n}(x), & m\geq n,\\
			0& n<m. 
		\end{cases}
	\end{equation}
	For $\nu>0$ we consider the weight matrix
	\begin{equation}
		\label{eq:weight_factorization}
		W^{(\alpha,\nu)}_\mu(x)=L_\mu^{(\alpha)}(x)\,T^{(\nu)}(x)\,L_\mu^{(\alpha)}(x)^\ast,\qquad T^{(\nu)}(x)=e^{-x} \sum_{k=1}^N x^{\nu+k} \delta_k^{(\nu)}\, E_{k,k}.
	\end{equation}
	It can be showed that
	\begin{equation}\label{eq:decompW_1}
		W_\mu^{(\alpha, \nu)}(x) = L^{(\alpha)}_\mu(0) e^{xA_\mu} T^{(\nu)}(x) e^{xA_\mu^\ast}  L^{(\alpha)}_\mu(0)^\ast
		\quad	\text{where}\quad 
		A_{\mu}=- \sum_{k=1}^{N-1} \tfrac{\mu_{k+1}}{\mu_{k}}E_{k+1,k}.
	\end{equation}
	We impose conditions on the sequence $\{\mu_i\}_{i=1}^{N}$ and the coefficients $\delta_{k}^{(\nu)}$. 
	First of all, we assume that the coefficients $\mu_{i}$ are real and non-zero for all $i$ 
	and $\delta_k^{(\nu)}>0$, $1\leq k\leq N$, so that the weight matrix is positive definite 
	(see \cite{koelink2019matrix} for more information about the weight matrix $W^{(\alpha,\nu)}_\mu$).
	On the other hand, we consider the diagonal matrix $\Delta^{(\nu)} = \mathrm{diag}(\delta_{1}^{(\nu)}, \ldots, \delta_{N}^{(\nu)})$, so that $(T^{(\nu)})_{k,k}=e^{-x} x^{\nu+k} (\Delta^{(\nu)})_{k,k}$. We assume that there exist coefficients $c^{(\nu)}$ and $d^{(\nu)}$ such that 
	\begin{equation}
		\label{eq:condition_delta_Phi}
		\Delta^{(\nu+1)}=(d^{(\nu)} J+c^{(\nu)}) \, \Delta^{(\nu)}.
	\end{equation}
	We also assume that the coefficients $\mu_k$ and $\delta^{(\nu)}_k$ satisfy the  relation
	\begin{equation}
		\label{eq:recursion-alphas}
		\tfrac{\mu_{k+1}^2}{\mu_{k}^2}=d^{(\nu)}k(N-k) \tfrac{ \delta_{k+1}^{(\nu)} }{\delta_{k}^{(\nu+1)}}, \qquad k=1,\ldots,N-1.
	\end{equation}	
	Under the above conditions, Propositions 5.1 and 5.2 from \cite{koelink2019matrix} say that 
	\begin{equation}\label{def Phi Psi}
		\Phi^{(\alpha, \nu)}(x)=(W_\mu^{(\alpha, \nu)}(x))^{-1}W_\mu^{(\alpha, \nu+1)}(x) 
		\quad \text{and} \quad 
		\Psi^{(\alpha, \nu)}(x)=(W_\mu^{(\alpha, \nu)}(x))^{-1}\tfrac{dW_\mu^{(\alpha, \nu+1)}}{dx}(x)
	\end{equation}	
	are matrix polynomials of degree $2$ and $1$ respectively.
	Moreover, Corollary 6.3 from \cite{koelink2019matrix} asserts 
	that the operator $D_2$ defined by 
	\begin{equation}\label{def: D2}
		D_2(x)= \dfrac{d^{2}}{dx^{2}} \Phi^{\ast}(x) + \dfrac{d}{dx} \Psi^{\ast}(x)
	\end{equation}
	is symmetric respect to $W_\mu^{(\alpha, \nu)}$.

	We begin with the following technical lemma which relates the matrix polynomials $R_{n}(x)$ 
	with the constants $c^{( \nu)}$ and $d^{( \nu)}$. The proof of Lemma \ref{lema 7.1} and Lemma \ref{lema qj} can be found in the appendix.
	\begin{lem}\label{lema 7.1}
		Let $\mu=(\mu_1,\ldots,\mu_N)$ and $\delta^{(\nu)}_{k}>0$ for $1\le k\le N$, satisfying \eqref{eq:condition_delta_Phi} and \eqref{eq:recursion-alphas}.
		Let $A:=A_{\mu}$ as in \eqref{eq:decompW_1}.
		If $R(x,n)$ are the matrix polynomials defined in \eqref{def R}, then
		\begin{equation}\label{equation ev 0}
			(  R'(0,n)- R(0,n) A)C^{(\nu)}=D^{(\nu)}R(0,n)  
		\end{equation}
		where $C^{(\nu)}= (d^{(\nu)}J+c^{(\nu)})(\nu+J+1)+ ((\Delta^{(\nu)})^{-1}A \Delta^{(\nu+1)})^{\ast}$ and
		$D^{(\nu)}= n(d^{(\nu)}(J-N-1)-c^{(\nu)})$
		with $c^{( \nu)}$ and $d^{( \nu)}$ as in \eqref{eq:condition_delta_Phi}.
	\end{lem}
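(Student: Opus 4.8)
The plan is to obtain \eqref{equation ev 0} from the second‑order operator $D_2$ of \eqref{def: D2}. By Corollary 6.3 of \cite{koelink2019matrix}, $D_2$ is symmetric with respect to $W^{(\alpha,\nu)}_\mu$ (here $\alpha=\nu$), and since $\Phi^{(\alpha,\nu)},\Psi^{(\alpha,\nu)}$ in \eqref{def Phi Psi} have degrees $2$ and $1$, $D_2$ maps polynomials to polynomials without raising the degree; hence, by the standard argument, the monic MVOPs $\widehat P_n$ of $W^{(\nu,\nu)}_\mu$ satisfy $\widehat P_n\cdot D_2=\Lambda^{(2)}_n\widehat P_n$ for suitable matrices $\Lambda^{(2)}_n$. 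Since, by \eqref{eq:decompW_1}, $W^{(\nu,\nu)}_\mu(x)=L^{(\nu)}_\mu(0)\,W^{(\nu)}(x)\,L^{(\nu)}_\mu(0)^\ast$ with $L^{(\nu)}_\mu(0)$ a constant unipotent matrix, the monic MVOPs are related by $\widehat P_n=L^{(\nu)}_\mu(0)\,P(\cdot,n)\,L^{(\nu)}_\mu(0)^{-1}$, and combining this with \eqref{def R} and $L(x)=e^{xA}$ gives $\widehat P_n(x)=L^{(\nu)}_\mu(0)\,K_n\,R(x,n)\,e^{-xA}\,L^{(\nu)}_\mu(0)^{-1}$.

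Next I would compute the coefficients of $D_2$ at $x=0$. Writing $W^{(\alpha,\nu)}_\mu(x)=B(x)\,T^{(\nu)}(x)\,B(x)^\ast$ with $B(x)=L^{(\alpha)}_\mu(0)e^{xA}$ (so $B'(x)=B(x)A$), and using \eqref{eq:condition_delta_Phi} in the form $T^{(\nu+1)}(x)=x\,(d^{(\nu)}J+c^{(\nu)})\,T^{(\nu)}(x)$ together with $x\,T^{(\nu)}(x)^{-1}T^{(\nu)}{}'(x)=\nu+J-x$, one finds $\Phi^{(\alpha,\nu)}(x)=x\,e^{-xA^\ast}L^{(\alpha)}_\mu(0)^{-\ast}(d^{(\nu)}J+c^{(\nu)})L^{(\alpha)}_\mu(0)^\ast e^{xA^\ast}$ — so $\Phi^{(\alpha,\nu)}(0)=0$, and condition \eqref{eq:recursion-alphas} is exactly what forces this to be a polynomial of degree $\le 2$ — and
\[\Psi^{(\alpha,\nu)}(0)=L^{(\alpha)}_\mu(0)^{-\ast}\Big[(\Delta^{(\nu)})^{-1}A\,\Delta^{(\nu+1)}+(d^{(\nu)}J+c^{(\nu)})(1+\nu+J)\Big]L^{(\alpha)}_\mu(0)^\ast ,\]
whence $(\Psi^{(\alpha,\nu)})^\ast(0)=L^{(\alpha)}_\mu(0)\,C^{(\nu)}\,L^{(\alpha)}_\mu(0)^{-1}$, using that the two diagonal factors in $C^{(\nu)}$ commute. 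Evaluating $\widehat P_n\cdot D_2=\Lambda^{(2)}_n\widehat P_n$ at $x=0$ kills the second–order term since $(\Phi^{(\alpha,\nu)})^\ast(0)=0$, leaving $\widehat P_n'(0)\,(\Psi^{(\alpha,\nu)})^\ast(0)=\Lambda^{(2)}_n\widehat P_n(0)$; substituting the formula for $\widehat P_n$ (and $\tfrac{d}{dx}e^{-xA}\big|_{0}=-A$) and cancelling the constant factors yields $(R'(0,n)-R(0,n)A)\,C^{(\nu)}=D^{(\nu)}R(0,n)$ with $D^{(\nu)}=K_n^{-1}L^{(\nu)}_\mu(0)^{-1}\Lambda^{(2)}_n L^{(\nu)}_\mu(0)K_n$.

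It then remains to prove that this $D^{(\nu)}$ equals $n\bigl(d^{(\nu)}(J-N-1)-c^{(\nu)}\bigr)$. Comparing top–degree coefficients in $\widehat P_n\cdot D_2=\Lambda^{(2)}_n\widehat P_n$ gives $\Lambda^{(2)}_n=n(n-1)\Phi^\ast_2+n\Psi^\ast_1$, where $\Phi^\ast_2,\Psi^\ast_1$ denote the leading coefficients of $(\Phi^{(\alpha,\nu)})^\ast$ and $(\Psi^{(\alpha,\nu)})^\ast$. From \eqref{Propiedad Kn} one has $K_nJK_n^{-1}=J-A(n+\nu+1+J)$, so $K_n\bigl(d^{(\nu)}(J-N-1)-c^{(\nu)}\bigr)K_n^{-1}=\bigl(d^{(\nu)}(J-N-1)-c^{(\nu)}-d^{(\nu)}A(\nu+1+J)\bigr)-n\,d^{(\nu)}A$; matching powers of $n$ in $L^{(\nu)}_\mu(0)^{-1}\Lambda^{(2)}_n L^{(\nu)}_\mu(0)=n\,K_n\bigl(d^{(\nu)}(J-N-1)-c^{(\nu)}\bigr)K_n^{-1}$ then reduces the claim to the two identities $\Phi^\ast_2=-d^{(\nu)}L^{(\nu)}_\mu(0)\,A\,L^{(\nu)}_\mu(0)^{-1}$ and $\Psi^\ast_1=L^{(\nu)}_\mu(0)\bigl(d^{(\nu)}(J-N-1)-c^{(\nu)}-d^{(\nu)}A(\nu+2+J)\bigr)L^{(\nu)}_\mu(0)^{-1}$, both of which follow from the explicit forms of $\Phi^{(\alpha,\nu)}$ (above) and $\Psi^{(\alpha,\nu)}$, using $[J,A]=A$ from \eqref{corchete A J} and condition \eqref{eq:recursion-alphas}. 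This last verification — pinning down the leading coefficients of $\Phi^{(\alpha,\nu)}$ and $\Psi^{(\alpha,\nu)}$ and checking that they conjugate through $L^{(\nu)}_\mu(0)$ into the stated diagonal–in–$J$ form — is the main obstacle; an alternative is to combine the $x=0$ relation coming from the operator $D_Q$ of Lemma \ref{lemma CQs} (namely $R'(0,n)(1+\nu+J)=-(n+J)R(0,n)+R(0,n)J$, valid because $R(\cdot,n)$ is a $D_Q$–eigenfunction with diagonal eigenvalue $-(n+J)$ by Theorem \ref{Coef R Laguerre}) with the $D_2$–identity, thereby reducing everything to an identity among the constant matrices $A$, $J$, $\Delta^{(\nu)}$, $\Delta^{(\nu+1)}$.
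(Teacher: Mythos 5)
Your proposal is correct and follows essentially the same route as the paper's appendix proof: conjugate the symmetric second--order operator $D_2$ of \eqref{def: D2} back to the weight $W^{(\nu)}$, pass to $R(x,n)=K_n^{-1}P(x,n)e^{xA}$ via \eqref{def R}, and evaluate the resulting eigenvalue equation at $x=0$, where the second--order term drops out because $\Phi^{(\alpha,\nu)}(0)=0$ and the first--order coefficient becomes $C^{(\nu)}$ after the conjugations. The one point where you genuinely diverge is the identification of the eigenvalue: the paper quotes Proposition 6.1 and Corollary 6.3 of \cite{koelink2019matrix}, which give $\widehat P_n\cdot D_2=n\hat{K}_n^{(\alpha,\nu)}\widehat P_n$ with $L_\mu^{(\alpha)}(0)^{-1}\hat{K}_n^{(\alpha,\nu)}L_\mu^{(\alpha)}(0)=d^{(\nu)}(-\Gamma_n-n-N-1)-c^{(\nu)}$, and then uses $K_n^{-1}\Gamma_nK_n=-(n+J)$; you instead extract $\Lambda_n^{(2)}=n(n-1)\Phi_2^\ast+n\Psi_1^\ast$ from leading coefficients and close the computation through $K_nJK_n^{-1}=J-A(n+\nu+1+J)$, which is the same information from \eqref{Propiedad Kn} packaged differently. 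Your version is somewhat more self--contained (it does not need the explicit $\hat{K}_n^{(\alpha,\nu)}$), at the price of having to pin down the top coefficients of $(\Phi^{(\alpha,\nu)})^\ast$ and $(\Psi^{(\alpha,\nu)})^\ast$; these follow from the conjugated expressions that the paper simply quotes from Corollary 5.3 of \cite{koelink2019matrix}, and your stated values $\Phi_2^\ast=-d^{(\nu)}L^{(\nu)}_\mu(0)\,A\,L^{(\nu)}_\mu(0)^{-1}$ and $\Psi_1^\ast=L^{(\nu)}_\mu(0)\bigl(d^{(\nu)}(J-N-1)-c^{(\nu)}-d^{(\nu)}A(\nu+2+J)\bigr)L^{(\nu)}_\mu(0)^{-1}$ are indeed what one obtains, using $(J+\nu+1)A=A(J+\nu+2)$ from \eqref{corchete A J}. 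One small slip worth fixing: in your displayed formula for $\Phi^{(\alpha,\nu)}$ the constant and exponential factors are in the wrong order; it should read $x\,\bigl(L_\mu^{(\alpha)}(0)^\ast\bigr)^{-1}e^{-xA^\ast}\bigl(d^{(\nu)}J+c^{(\nu)}\bigr)e^{xA^\ast}L_\mu^{(\alpha)}(0)^\ast$, which does not affect the two facts you actually use, namely $\Phi^{(\alpha,\nu)}(0)=0$ and the value of $\Phi_2^\ast$.
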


	In the sequel, let $0 \le n$, $1 \le i \le N$. We consider the sequence $\epsilon_{j}:=\epsilon_{j}^{(n,i)}$ defined recursively by  
	\begin{equation}\label{epsilon recursion}
		\epsilon_0=1, \quad \quad \epsilon_{j}=(n+i-j+1) d^{(\nu)}  \Big((j-1) + \tfrac{c^{(\nu)}}{d^{(\nu)}}\Big) \epsilon_{j-1} \quad \text{for $n+i-j \ge 0$}, 
	\end{equation}
	with $c^{(\nu)}, d^{(\nu)}$ defined as in \eqref{eq:condition_delta_Phi}.    
	We have the following result.
	
	\begin{lem}\label{lema epsilon's}
		Let $0\le n$ and let $1 \le N$, $1 \le i \le N$ integers and let $\{\epsilon_l\}_{l=0}^{i+n}$ be the sequence defined as in \eqref{epsilon recursion}. 
		If $\nu>0$, $\delta^{(\nu)}$ satisfies \eqref{eq:condition_delta_Phi} and $0 \le j < n+i$, we have
		$$\tfrac{\epsilon_j}{\epsilon_{j+1}}M_j=1
		\qquad \text{where} \qquad M_{j}= (n+i-j) (d^{(\nu)} j + c^{(\nu)})$$
		with $c^{(\nu)}, d^{(\nu)}$ as in \eqref{eq:condition_delta_Phi}.
	\end{lem}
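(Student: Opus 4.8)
The proof is a direct unwinding of the recursion \eqref{epsilon recursion}: the statement is essentially a single computation, and the only point requiring a word of justification is that the quotient $\epsilon_j/\epsilon_{j+1}$ is legitimate. Here is the plan.

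First I would evaluate \eqref{epsilon recursion} at the index $j+1$ rather than $j$. Since $0\le j<n+i$ we have $n+i-(j+1)\ge 0$, so the recursion applies, and after clearing the inner fraction $d^{(\nu)}\bigl(j+c^{(\nu)}/d^{(\nu)}\bigr)=d^{(\nu)}j+c^{(\nu)}$ it reads
\begin{align*}
\epsilon_{j+1}&=\bigl(n+i-(j+1)+1\bigr)\,d^{(\nu)}\Bigl((j+1-1)+\tfrac{c^{(\nu)}}{d^{(\nu)}}\Bigr)\epsilon_{j}\\
&=(n+i-j)\bigl(d^{(\nu)}j+c^{(\nu)}\bigr)\epsilon_{j}=M_{j}\,\epsilon_{j},
\end{align*}
with $M_j$ exactly as in the statement. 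Dividing by $\epsilon_{j+1}$ then gives $\tfrac{\epsilon_j}{\epsilon_{j+1}}M_j=\tfrac{\epsilon_j}{M_j\epsilon_j}M_j=1$, which is the assertion.

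The remaining point is that $M_j\neq 0$, so that $\epsilon_{j+1}=M_j\epsilon_j\ne 0$ (inductively from $\epsilon_0=1$) and the division above is meaningful. For $0\le j<n+i$ the factor $n+i-j\ge 1$ is positive. For the factor $d^{(\nu)}j+c^{(\nu)}$ I would use the standing positivity hypotheses on the $\delta$'s: the left-hand side of \eqref{eq:recursion-alphas} is a positive real while $k(N-k)\,\delta^{(\nu)}_{k+1}/\delta^{(\nu+1)}_{k}>0$, which forces $d^{(\nu)}>0$; comparing the positive $(k,k)$-entries of $\Delta^{(\nu)}$ and $\Delta^{(\nu+1)}$ in \eqref{eq:condition_delta_Phi} gives $d^{(\nu)}k+c^{(\nu)}>0$ for $1\le k\le N$, hence $d^{(\nu)}j+c^{(\nu)}=d^{(\nu)}(j-1)+(d^{(\nu)}+c^{(\nu)})>0$ for $j\ge 1$, and for $j=0$ the factor equals $c^{(\nu)}$, which is nonzero in the non-degenerate situation considered in this section (otherwise $\epsilon_1=0$ and the case $j=0$ would be vacuous). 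Thus $M_j\neq 0$ and the identity holds.

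There is no genuine obstacle in this lemma; the content is simply the observation that \eqref{epsilon recursion} can be rewritten in the telescoping form $\epsilon_{j+1}=M_j\epsilon_j$, and the rest is bookkeeping on the range of $j$ and on the non-vanishing of the factors of $M_j$. This rewriting is precisely what will let one cancel the $\epsilon_j$'s against the $M_j$'s in the subsequent identification of the $\xi(n,i,j)$'s with dual Hahn polynomials.
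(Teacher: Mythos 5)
Your proof is correct and follows the same route as the paper, which simply states that the identity follows by an inductive/direct unwinding of the recursion \eqref{epsilon recursion}: shifting the index to $j+1$ gives $\epsilon_{j+1}=M_j\epsilon_j$ exactly as you compute. Your additional discussion of why $M_j\neq 0$ (so that the quotient $\epsilon_j/\epsilon_{j+1}$ is legitimate) is a point the paper leaves implicit, and it is handled correctly.
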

	\begin{proof}
		It follows by a simple inductive argument from the definition of $\epsilon_j$.
	\end{proof}
	
	\medskip
	
	Given $i=1,\ldots,N$, $n \ge 0$ and  $j$ a positive integer such that $n+i-j \ge 0$, 
	in the sequel we consider the sequence $q_j:=q_j^{(n,i)}$ by the expression
	\begin{equation}\label{definition qj}
		q_{j}^{(n,i)} := \epsilon_{j}^{(n,i)} \xi(n,i,j),    
	\end{equation}
	where $\epsilon_j$'s are as in \eqref{epsilon recursion}.
	
	\begin{lem}
		\label{lema qj}
		Let $\mu=(\mu_1,\ldots,\mu_N)$ and $\delta^{(\nu)}_{k}>0$ for $1\le k\le N$, satisfying \eqref{eq:condition_delta_Phi} and \eqref{eq:recursion-alphas}. 
		For $1 \le i \le N$ and $0 \le n$, let $\{q_l\}_{l=0}^{i+n}$ be the sequence as in \eqref{definition qj}.
		Then, the sequence $\{q_l\}_{l=0}^{i+n}$ satisfies
		$$ E_j q_j +  F_j  q_{j-1} + \tfrac{1}{d^{(\nu)}}   q_{j+1} = 0, \quad n+i-j > 0,$$
		where 
		\begin{align*}
			E_j &=(n+i-j) ( j + \tfrac{c^{(\nu)}}{d^{(\nu)}}) +  (j-1) (N-j+1) + n  ( (i-N-1)-\tfrac{c^{(\nu)}}{d^{(\nu)}}) ,\\
			F_j &=(j-1) (N-j+1) \tfrac{\mu_{j-1}}{\mu_j}  (n+i-j+1)  \big(d^{(\nu)} (j-1) + c^{(\nu)} \big)    
		\end{align*}
		with $c^{(\nu)}, d^{(\nu)}$ as in \eqref{eq:condition_delta_Phi}.
	\end{lem}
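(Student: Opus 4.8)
The plan is to read off the claimed scalar three‑term recursion from the $(i,j)$‑entry of the matrix identity \eqref{equation ev 0} in Lemma \ref{lema 7.1}, and then renormalize it with the sequence $\epsilon_j$ of \eqref{epsilon recursion}. First I would record the explicit shape of the two matrices occurring in \eqref{equation ev 0}. Since $J=\mathrm{diag}(1,\ldots,N)$ and $A:=A_\mu$ has nonzero entries only on the first subdiagonal, the matrix $D^{(\nu)}=n(d^{(\nu)}(J-N-1)-c^{(\nu)})$ is diagonal with $(D^{(\nu)})_{i,i}=n(d^{(\nu)}(i-N-1)-c^{(\nu)})$, while $C^{(\nu)}=(d^{(\nu)}J+c^{(\nu)})(\nu+J+1)+((\Delta^{(\nu)})^{-1}A\Delta^{(\nu+1)})^{\ast}$ is upper bidiagonal: the diagonal entry in row $k$ is $(d^{(\nu)}k+c^{(\nu)})(\nu+k+1)$, and the superdiagonal entry in position $(k,k+1)$ comes from $A,\Delta^{(\nu)},\Delta^{(\nu+1)}$ and reduces to a clean multiple of $\mu_k/\mu_{k+1}$ and $d^{(\nu)}k(N-k)$ after invoking the normalization \eqref{eq:recursion-alphas}.

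Next I would substitute the explicit entries of $R(0,n)$ and $R'(0,n)$. By Theorem \ref{Coef R Laguerre} we have $R(0,n)_{i,j}=L^{(\nu+j)}_{n+i-j}(0)\,\xi(n,i,j)$ and, using $\tfrac{d}{dx}L^{(\alpha)}_m=-L^{(\alpha+1)}_{m-1}$, $R'(0,n)_{i,j}=-L^{(\nu+j+1)}_{n+i-j-1}(0)\,\xi(n,i,j)$; moreover $(R(0,n)A)_{i,j}$ is a single multiple of $R(0,n)_{i,j+1}$ because $A$ is subdiagonal. Plugging these into the $(i,j)$‑entry of \eqref{equation ev 0}---which, since $C^{(\nu)}$ is upper bidiagonal, only involves columns $j-1,j,j+1$ of $R(0,n)$, hence only $\xi(n,i,j-1),\xi(n,i,j),\xi(n,i,j+1)$---and dividing through by $L^{(\nu+j)}_{n+i-j}(0)$ using the elementary identities $L^{(\alpha)}_m(0)=(\alpha+1)_m/m!$ and $L^{(\alpha+1)}_{m-1}(0)/L^{(\alpha)}_m(0)=m/(\alpha+1)$, I obtain a scalar three‑term recursion in $j$ for the constants $\xi(n,i,\cdot)$, valid whenever $n+i-j>0$.

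Finally I would pass from $\xi$ to $q$ via \eqref{definition qj}. Multiplying the $\xi$‑recursion by $\epsilon_j$ and writing $\xi(n,i,j\pm1)=q_{j\pm1}/\epsilon_{j\pm1}$, the ratios $\epsilon_j/\epsilon_{j-1}$ and $\epsilon_j/\epsilon_{j+1}$ are read off directly from \eqref{epsilon recursion} (equivalently from Lemma \ref{lema epsilon's}, which gives $\epsilon_{j+1}/\epsilon_j=(n+i-j)(d^{(\nu)}j+c^{(\nu)})$): the factor $\epsilon_j/\epsilon_{j-1}=(n+i-j+1)(d^{(\nu)}(j-1)+c^{(\nu)})$ is exactly what turns the prefactor of $q_{j-1}$ into $F_j$, and the factor $\epsilon_j/\epsilon_{j+1}$, together with \eqref{eq:recursion-alphas}, reduces the prefactor of $q_{j+1}$ to the constant $1/d^{(\nu)}$. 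After an overall division by $d^{(\nu)}$ the coefficient of $q_j$ collapses to $E_j$, and one arrives at $E_j q_j+F_j q_{j-1}+\tfrac{1}{d^{(\nu)}}q_{j+1}=0$. The one genuinely delicate point is the computation in the middle step: correctly evaluating the superdiagonal entry of $C^{(\nu)}$ by means of \eqref{eq:recursion-alphas} and keeping every Pochhammer/factorial ratio and every $\epsilon$‑factor aligned, so that the three coefficients collapse to precisely $E_j$, $F_j$ and $1/d^{(\nu)}$ rather than to merely equivalent expressions.
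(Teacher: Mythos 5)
Your proposal is correct and follows essentially the same route as the paper's appendix proof: extract the $(i,j)$-entry of \eqref{equation ev 0}, use $L^{(\alpha)}_m(0)=(\alpha+1)_m/m!$ and $\tfrac{d}{dx}L^{(\alpha)}_m=-L^{(\alpha+1)}_{m-1}$ together with \eqref{eq:recursion-alphas} to get a three-term recursion in $j$ for the $\xi(n,i,\cdot)$, then renormalize by $\epsilon_j$ via Lemma \ref{lema epsilon's}. The only (immaterial) difference is your choice of dividing by $L^{(\nu+j)}_{n+i-j}(0)$ rather than the paper's Pochhammer normalization, which changes nothing after the final division by $d^{(\nu)}$.
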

	
	\medskip
	
	\subsection*{Dual Hahn polynomials}
	
	For $0 \le n$, $1 \le i \le N$ let us consider the following sequence 
	\begin{equation}\label{definition qtj}
		\widetilde{q_j}^{(n,i)}:= {(d^{(\nu)})}^{-j} q_j^{(n,i)}, \quad n+i-j > 0,
	\end{equation}where $q_j$'s are as in \eqref{definition qj} and $d^{(\nu)}$ as \eqref{eq:condition_delta_Phi}.

	\begin{lem}\label{lema qtilde}
		Let $\mu=(\mu_1,\ldots,\mu_N)$ and $0 < \delta^{(\nu)}_{k}$, for $1\le k\le N$, satisfying \eqref{eq:condition_delta_Phi} and \eqref{eq:recursion-alphas}. 
		Let $\widetilde{q_l}$ be  as in \eqref{definition qtj}.
		If $\mu_j=1$ for all $j$ then the $\widetilde{q_l}$'s satisfy the following equation
		$$\widetilde{E}_j \widetilde{q_j}+ \widetilde{F}_j  \widetilde{q}_{j-1} +  \widetilde{q}_{j+1} = 0, \quad n+i-j > 0,$$
		where
		{\small \begin{align*}
				\widetilde{E}_j &=(n+i-j) ( j + \tfrac{c^{(\nu)}}{d^{(\nu)}}) +  (j-1) (N-j+1) + n  ( i-N-1-\tfrac{c^{(\nu)}}{d^{(\nu)}}), \\
				\widetilde{F}_j &= (j-1) (N-j+1)  (n+i-j+1)  \Big(j-1 + \tfrac{c^{(\nu)}}{d^{(\nu)}} \Big)
		\end{align*}}
		with $c^{(\nu)}, d^{(\nu)}$ as in \eqref{eq:condition_delta_Phi}.
	\end{lem}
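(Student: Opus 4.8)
The plan is to obtain the recursion for $\widetilde{q}_l$ directly from the one already proved for $q_l$ in Lemma \ref{lema qj}, simply by carrying out the change of normalization \eqref{definition qtj}. First I would start from
$$E_j q_j + F_j q_{j-1} + \tfrac{1}{d^{(\nu)}} q_{j+1} = 0, \qquad n+i-j>0,$$
and substitute $q_\ell = (d^{(\nu)})^{\ell}\widetilde{q}_\ell$ for $\ell \in \{j-1,j,j+1\}$. The three terms become $(d^{(\nu)})^{j} E_j \widetilde{q}_j$, $(d^{(\nu)})^{j-1} F_j \widetilde{q}_{j-1}$ and $(d^{(\nu)})^{j}\widetilde{q}_{j+1}$ respectively. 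Dividing through by $(d^{(\nu)})^{j}$, which is nonzero thanks to the positivity of the $\delta^{(\nu)}_k$ together with \eqref{eq:condition_delta_Phi}, yields
$$E_j \widetilde{q}_j + \tfrac{1}{d^{(\nu)}} F_j \widetilde{q}_{j-1} + \widetilde{q}_{j+1} = 0,$$
so that necessarily $\widetilde{E}_j = E_j$ and $\widetilde{F}_j = F_j/d^{(\nu)}$.

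Next I would insert the explicit expressions for $E_j$ and $F_j$ from Lemma \ref{lema qj} and use the hypothesis $\mu_j = 1$ for all $j$, whose only effect is to make the ratio $\mu_{j-1}/\mu_j$ appearing in $F_j$ equal to $1$. Then $\widetilde{E}_j$ coincides verbatim with the claimed formula, and
$$\widetilde{F}_j = \tfrac{1}{d^{(\nu)}}\, (j-1)(N-j+1)(n+i-j+1)\bigl(d^{(\nu)}(j-1)+c^{(\nu)}\bigr) = (j-1)(N-j+1)(n+i-j+1)\Bigl(j-1+\tfrac{c^{(\nu)}}{d^{(\nu)}}\Bigr),$$
which is exactly the stated $\widetilde{F}_j$. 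This completes the proof.

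There is essentially no obstacle here beyond bookkeeping the powers of $d^{(\nu)}$ and noting that the assumption $\mu_j = 1$ is used precisely to remove the ratio $\mu_{j-1}/\mu_j$. All of the substantive content was already carried out in Lemma \ref{lema qj} (and, upstream of it, in Lemma \ref{lema 7.1}, which ties $R(0,n)$ to the Pearson-type data $c^{(\nu)}$, $d^{(\nu)}$). The role of the present lemma is only to rescale the recursion into the form in which the coefficient of $\widetilde{q}_{j+1}$ is normalized to $1$, so that in the following step it can be matched against the standard three-term recurrence for dual Hahn polynomials.
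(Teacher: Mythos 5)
Your proposal is correct and is essentially identical to the paper's own proof: both start from the recursion of Lemma \ref{lema qj}, substitute $q_\ell=(d^{(\nu)})^{\ell}\widetilde{q}_\ell$, use $\mu_j=1$ to drop the ratio $\mu_{j-1}/\mu_j$, and divide by $(d^{(\nu)})^{j}$ to read off $\widetilde{E}_j=E_j$ and $\widetilde{F}_j=F_j/d^{(\nu)}$.
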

	\begin{proof}
		By Lemma \ref{lema qj}, we have that $ E_j q_j +  F_j  q_{j-1} + \tfrac{1}{d^{(\nu)}}   q_{j+1} = 0$
		with 
		{\small \begin{align*}
				E_j &=(n+i-j) ( j + \tfrac{c^{(\nu)}}{d^{(\nu)}}) +  (j-1) (N-j+1) + n  ( (i-N-1)-\tfrac{c^{(\nu)}}{d^{(\nu)}}) ,\\
				F_j &=(j-1) (N-j+1) \tfrac{\mu_{j-1}}{\mu_j}  (n+i-j+1)  \big(d^{(\nu)} (j-1) + c^{(\nu)} \big).    
		\end{align*}}
		Now, since $\mu_j=1$ for all $j$ and  $\widetilde{q_j}(x):= {(d^{(\nu)})}^{-j} q_j(x)$,
		we have that
		{\small \begin{eqnarray*}
				&& \Big((n+i-j) ( j + \tfrac{c^{(\nu)}}{d^{(\nu)}}) +  (j-1) (N-j+1) + n  ( (i-N-1)-\tfrac{c^{(\nu)}}{d^{(\nu)}}) \Big) \widetilde{q_j} {(d^{(\nu)})}^{j} \\
				&+&  (j-1) (N-j+1)  (n+i-j+1)  \big(d^{(\nu)} (j-1) + c^{(\nu)} \big) {(d^{(\nu)})}^{j-1} \widetilde{q}_{j-1} + \tfrac{1}{d^{(\nu)}} {(d^{(\nu)})}^{j+1}  \widetilde{q}_{j+1} = 0.
		\end{eqnarray*}}
		By multiplication for ${(d^{(\nu)})}^{-j}$, we obtain that
		$$\widetilde{E}_j \widetilde{q_j}+ \widetilde{F}_j  \widetilde{q}_{j-1} +  \widetilde{q}_{j+1} = 0,$$
		with
		{\small \begin{align*}
				\widetilde{E}_j &=(n+i-j) ( j + \tfrac{c^{(\nu)}}{d^{(\nu)}}) +  (j-1) (N-j+1) + n  ( i-N-1-\tfrac{c^{(\nu)}}{d^{(\nu)}}), \\
				\widetilde{F}_j &= (j-1) (N-j+1)  (n+i-j+1)  \Big(j-1 + \tfrac{c^{(\nu)}}{d^{(\nu)}} \Big),
		\end{align*}}
		as asserted.
	\end{proof}
	
	\medskip 
	
	Recall that if the sequence of polynomials  $\{s_k\}$ satisfies the normalized recurrence relations 
	\begin{equation}\label{recurrence sn}
		xs_k(x)= s_{k+1}(x) -(u_k+v_k) s_k(x)+u_{k-1}v_{k} s_{k-1}(x),
	\end{equation}
	with 
	$$u_k=(k+\gamma+1)(k-M), \qquad v_k=k(k-\delta-M-1),$$
	then $s_k$ satisfies 
	$$T_k(\lambda(x);\gamma,\delta,M) = \tfrac{1}{(\gamma+1)_{k} (-M)_{k}} s_k(\lambda(x))$$ 
	where $\lambda(x)= x(x+\gamma+\delta +1)$ and $\{T_k\}$ is the sequence of dual Hahn polynomials defined by
	$$T_k(\lambda(x);\gamma,\delta,M)=  \setlength\arraycolsep{1.1pt}
	{}_3 F_2\left(\begin{matrix}-k,& &-x,& &x+\gamma+\delta+1\\[.1em] 
		&&\gamma+1,&&-M&\end{matrix};1\right) \qquad \text{for } k=0,1,\ldots,M. $$
	\begin{prop}\label{prop: dH qtilde}
		Let $\mu=(\mu_1,\ldots,\mu_N)$ and $\delta^{(\nu)}_{k}>0$ for $1\le k\le N$, satisfying \eqref{eq:condition_delta_Phi} and \eqref{eq:recursion-alphas}. 
		Let $\widetilde{q_j}$ be  as in \eqref{definition qtj}.
		If $\mu_{\ell}=1$ for all $\ell=1,\ldots,N$, then the $\widetilde{q_j}$'s satisfy
		\begin{equation}\label{eq: qtilde dH}
			\widetilde{q}_{j}= (\gamma+1)_{j-1}(-(N-1))_{j-1} T_{j-1}(\lambda(x^{(n,i)}); \gamma,\delta, N-1) \quad n+i-j > 0
		\end{equation}
		where $\gamma= \tfrac{c^{(\nu)}}{d^{(\nu)}}$,  $\delta=n+i-N$ and $x^{(n,i)}=(\gamma+1)(N+i-2)-n(N-i)$.
	\end{prop}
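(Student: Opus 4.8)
The plan is to recognise the three-term recurrence for $\{\widetilde q_j\}$ obtained in Lemma~\ref{lema qtilde} as the monic three-term recurrence \emph{in the degree} of the dual Hahn polynomials, evaluated at the single point $\lambda(x^{(n,i)})$, and then to conclude by uniqueness of the solution of a three-term recurrence with prescribed initial value. To set this up, with $\gamma=c^{(\nu)}/d^{(\nu)}$, $\delta=n+i-N$ and $M=N-1$ put $u_k=(k+\gamma+1)(k-M)$ and $v_k=k(k-\delta-M-1)=k(k-n-i)$, and let $s_k:=(\gamma+1)_k\,(-(N-1))_k\,T_k(\,\cdot\,;\gamma,\delta,N-1)$ be the monic dual Hahn polynomials, so that $s_{k+1}(\lambda)=(\lambda+u_k+v_k)\,s_k(\lambda)-u_{k-1}v_k\,s_{k-1}(\lambda)$. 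Evaluating this at $\lambda_0:=\lambda(x^{(n,i)})$ and setting $r_j:=s_{j-1}(\lambda_0)$, the sequence $\{r_j\}$ satisfies $r_{j+1}-(\lambda_0+u_{j-1}+v_{j-1})\,r_j+u_{j-2}v_{j-1}\,r_{j-1}=0$ with $r_1=s_0(\lambda_0)=1$.

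The core of the argument is then to check that this is exactly the recurrence of Lemma~\ref{lema qtilde}, i.e.\ that $\widetilde F_j=u_{j-2}v_{j-1}$ and $\widetilde E_j=-(\lambda_0+u_{j-1}+v_{j-1})$. The first is a factorisation check: since $j-1-N=-(N-j+1)$ and $(j-1)-n-i=-(n+i-j+1)$, both $\widetilde F_j$ and $u_{j-2}v_{j-1}$ equal $(j-1)(N-j+1)(n+i-j+1)(j-1+\gamma)$. For the second, one would first verify that $-\widetilde E_j-u_{j-1}-v_{j-1}$ is independent of $j$ --- this is precisely the role of the choice $\delta=n+i-N$, which makes the coefficient of $j$ in that expression vanish --- and then evaluate the surviving constant and identify it with $\lambda(x^{(n,i)})$; equivalently, that $x^{(n,i)}=(\gamma+1)(N+i-2)-n(N-i)$ is the relevant root of $w\,(w+\gamma+\delta+1)=-\widetilde E_j-u_{j-1}-v_{j-1}$. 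This is the computational heart of the proof.

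Once the two recurrences coincide it remains to match initial data. Because $\widetilde F_1=0$, the recurrence of Lemma~\ref{lema qtilde} starts cleanly at $j=1$ and is therefore uniquely determined by the single value $\widetilde q_1$; the same holds for $\{r_j\}$ and $r_1=1$. Thus one only needs $\widetilde q_1=1$, which follows by unwinding \eqref{epsilon recursion}, \eqref{definition qj} and \eqref{definition qtj}: one has $\epsilon_1=(n+i)c^{(\nu)}$, hence $\widetilde q_1=(d^{(\nu)})^{-1}(n+i)c^{(\nu)}\,\xi(n,i,1)$, and the value of $\xi(n,i,1)$ is read off from the $(i,1)$-entry of the relation in Lemma~\ref{lema 7.1} (or, alternatively, from the recursions of Propositions~\ref{recurrencia xi general} and \ref{formula j=n+i}). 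An induction on $j$ over the range $n+i-j>0$, using the coincidence of the recurrences, then yields $\widetilde q_j=r_j=s_{j-1}(\lambda(x^{(n,i)}))$, which is exactly \eqref{eq: qtilde dH}.

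The main obstacle is the second identity in the middle step: one has to see the $j$-dependence cancel --- which is what forces and uses $\delta=n+i-N$ --- and then recognise the leftover constant as $\lambda(x^{(n,i)})$, so that $x^{(n,i)}$ comes out precisely as stated; carrying the parameters $\gamma=c^{(\nu)}/d^{(\nu)}$, $\delta=n+i-N$ through all the linear factors without sign errors is the delicate part. A secondary but genuinely needed point is the normalisation $\widetilde q_1=1$, which is what anchors the abstract dual Hahn recurrence to the concrete sequence $\{\widetilde q_j\}$.
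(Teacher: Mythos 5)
Your strategy is the same as the paper's: match the three-term recurrence of Lemma \ref{lema qtilde} with the normalized recurrence \eqref{recurrence sn} of the dual Hahn polynomials evaluated at a single point, and then propagate forward from $j=1$ using $\widetilde{F}_1=0$. Your verification $\widetilde{F}_j=u_{j-2}v_{j-1}=(j-1)(N-j+1)(n+i-j+1)(j-1+\gamma)$ is correct, and your insistence that an initial-value check $\widetilde{q}_1=1$ is needed (the paper's proof omits this entirely) is a genuine improvement in rigour over the published argument.

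The difficulty is that you defer exactly the two computations that carry all the content, and neither closes in the form you assert. First, carrying out your ``computational heart'' with $\widetilde{E}_j$, $\widetilde{F}_j$ as in Lemma \ref{lema qtilde}, $u_{j-1}=(j+\gamma)(j-N)$ and $v_{j-1}=(j-1)(j-1-n-i)$, one finds
\begin{equation*}
-\widetilde{E}_j-u_{j-1}-v_{j-1}=(N-i)(n+\gamma+1)=\lambda(N-i),
\end{equation*}
which is indeed independent of $j$ but is \emph{not} $\lambda\bigl((\gamma+1)(N+i-2)-n(N-i)\bigr)$ in general: for $N=2$, $i=1$, $n=1$ the constant is $\gamma+2=\lambda(1)$, whereas the stated $x^{(n,i)}=\gamma$ gives $\lambda(\gamma)=\gamma(2\gamma+1)$. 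So the evaluation point is $\lambda(N-i)$, and your step ``identify the leftover constant with $\lambda(x^{(n,i)})$'' does not go through with $x^{(n,i)}$ as written. Second, the normalization: as you compute, $\widetilde{q}_1=(n+i)\gamma\,\xi(n,i,1)$, so $\widetilde{q}_1=1$ amounts to $\xi(n,i,1)=\tfrac{1}{(n+i)\gamma}$; but for $n=0$, $i=2$, Proposition \ref{recurrencia xi general}($a$) together with \eqref{def Kn} gives $\xi(0,2,1)=a_1=-\mu_2/\mu_1=-1$, hence $\widetilde{q}_1=-2\gamma$, which is not $1$ generically. What your argument actually establishes is $\widetilde{q}_j=\widetilde{q}_1\,(\gamma+1)_{j-1}(-(N-1))_{j-1}\,T_{j-1}(\lambda(N-i);\gamma,\delta,N-1)$. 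The paper's own proof asserts the same two identities without verification, so your outline faithfully mirrors it; but in a blind attempt these are precisely the steps you must execute, and executing them shows the identification cannot be completed with the parameters and normalization as stated.
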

	\begin{proof}
		By Lemma \ref{lema qtilde}, we have that
		$$\widetilde{E}_j \widetilde{q_j}+ \widetilde{F}_j  \widetilde{q}_{j-1} +  \widetilde{q}_{j+1} = 0, \quad n+i-j > 0,$$
		where
		{\small \begin{align*}
				\widetilde{E}_j &=(n+i-j) ( j + \tfrac{c^{(\nu)}}{d^{(\nu)}}) +  (j-1) (N-j+1) + n  ( i-N-1-\tfrac{c^{(\nu)}}{d^{(\nu)}}), \\
				\widetilde{F}_j &= (j-1) (N-j+1)  (n+i-j+1)  \Big(j-1 + \tfrac{c^{(\nu)}}{d^{(\nu)}} \Big)
		\end{align*}}
		with $c^{(\nu)}, d^{(\nu)}$ as in \eqref{eq:condition_delta_Phi}. Now, if we consider
		$$u_k=(k+\gamma+1)(k-M)\text{ and } v_k=k(k-\delta-M-1),$$
		with $\gamma= \tfrac{c^{(\nu)}}{d^{(\nu)}}$, $\delta=n+i-N$ and $M=N-1$. It is straightforward to check that
		$$\widetilde{F}_j = u_{j-2} v_{j-1} \quad \text{and} \quad \widetilde{E}_j = -(u_{j-1}+v_{j-1}+x^{(n,i)})$$	
		where $x^{(n,i)}=(\gamma+1)(N+i-2)-n(N-i)$. 
		Thus, since $\widetilde{q_j}$ can be seen as constants polynomial, 
		we have that
		$$x^{(n,i)}\widetilde{q_j}(x^{(n,i)}) = 
		\widetilde{q}_{j+1}(x^{(n,i)}) -(u_{j-1}+v_{j-1}) \widetilde{q}_j(x^{(n,i)})+u_{j-2} v_{j-1} \widetilde{q}_{j-1}(x^{(n,i)}).$$
		Therefore, by definition of dual Hahn polynomials and by taking into account that $\widetilde{q}_{j}$ is a constant polynomial, we obtain \eqref{eq: qtilde dH} as desired.
	\end{proof}

	\medskip
	
	By recalling that $\widetilde{q}_{j}=(d^{(\nu)})^{-j} q_j = \epsilon_{j} (d^{(\nu)})^{-j} \xi(n,i,j)$ for $n+i-j > 0$, we obtain the following result
	
	\begin{thm}
		Let $\mu=(\mu_1,\ldots,\mu_N)$ and $\delta^{(\nu)}_{k} > 0$, for $1\le k\le N$, satisfying \eqref{eq:condition_delta_Phi} and \eqref{eq:recursion-alphas}. Let $\epsilon_{j}$ as in \eqref{epsilon recursion}.
		If $\mu_{\ell}=1$ for all $\ell=1,\ldots,N$, then the constants $\xi(n,i,j)$'s satisfy
		\begin{equation}\label{eq: xi dH}
			\xi(n,i,j)= \tfrac{(d^{(\nu)})^{j}(\gamma+1)_{j-1}(-(N-1))_{j-1}}{\epsilon_{j}} T_{j-1}(\lambda(x^{(n,i)}); \gamma,\delta, N-1), \quad n+i-j > 0
		\end{equation}
		where $\gamma= \tfrac{c^{(\nu)}}{d^{(\nu)}}$,  $\delta=n+i-N$ and $x^{(n,i)}=(\gamma+1)(N+i-2)-n(N-i)$.
		Moreover, if $n+i=j$, the constants $\xi(n,i,j)$'s satisfy the recursion	
		\begin{equation}
			\xi(0,1,1)=\tfrac{1}{\nu +2}, \qquad	  \xi(n,i,j-1) =
			\Big( \tfrac{n(N+1-i)(i-1)}{(j-1) (N-j+1)} + 1 \Big)\xi(n,i,j), \quad j >1,
		\end{equation}
		
	\end{thm}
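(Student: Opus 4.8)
My plan is to split the argument according to whether $n+i-j>0$ or $n+i=j$. For $n+i-j>0$ there is essentially nothing new to prove: this case is Proposition~\ref{prop: dH qtilde} rewritten. By the definitions \eqref{definition qj} and \eqref{definition qtj} one has $\xi(n,i,j)=q_j^{(n,i)}/\epsilon_j^{(n,i)}=(d^{(\nu)})^{j}\widetilde q_j^{(n,i)}/\epsilon_j^{(n,i)}$, and substituting $\widetilde q_j^{(n,i)}=(\gamma+1)_{j-1}(-(N-1))_{j-1}T_{j-1}(\lambda(x^{(n,i)});\gamma,\delta,N-1)$ from Proposition~\ref{prop: dH qtilde} yields \eqref{eq: xi dH} directly. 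One only checks that the hypotheses imposed here ($\mu_\ell=1$ for all $\ell$, together with \eqref{eq:condition_delta_Phi} and \eqref{eq:recursion-alphas}) are exactly those of Proposition~\ref{prop: dH qtilde}, so this part is pure bookkeeping.

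The case $n+i=j$ must be treated separately, since $\widetilde q_j^{(n,i)}$ — and hence Proposition~\ref{prop: dH qtilde} — is only defined for $n+i-j>0$; this value sits at the top of the staircase of $R(0,n)$, outside the dual Hahn range. I would obtain it from the evaluation identity \eqref{equation ev 0} of Lemma~\ref{lema 7.1}, read at its $(i,n+i)$ entry. The point is that $C^{(\nu)}$ is upper bidiagonal (diagonal entries $(d^{(\nu)}m+c^{(\nu)})(\nu+m+1)$, superdiagonal entries coming from $((\Delta^{(\nu)})^{-1}A\Delta^{(\nu+1)})^{\ast}$), $D^{(\nu)}$ is diagonal, $A$ has nonzero entries only in the positions $(k+1,k)$ with $a_k=-1$ (because $\mu_\ell=1$), and $R(0,n)$ vanishes strictly above its staircase by Theorem~\ref{Coef R Laguerre}. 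Consequently, in that entry equation the only surviving quantities are $R(0,n)_{i,n+i}=\xi(n,i,n+i)$ and $R(0,n)_{i,n+i-1}=(n+\nu+i)\xi(n,i,n+i-1)$ (Corollary~\ref{coro R(0,n)}), together with $R'(0,n)_{i,n+i}=0$ and $R'(0,n)_{i,n+i-1}=-\xi(n,i,n+i-1)$, computed from $(L^{(\alpha)}_m)'=-L^{(\alpha+1)}_{m-1}$. This collapses the identity to a two-term relation $\xi(n,i,j-1)=c_{n,i,j}\,\xi(n,i,j)$. Expanding the superdiagonal entry $C^{(\nu)}_{n+i-1,n+i}=a_{n+i-1}\delta_{n+i-1}^{(\nu+1)}/\delta_{n+i}^{(\nu)}$ and simplifying with \eqref{eq:condition_delta_Phi} and \eqref{eq:recursion-alphas} — which for $\mu_\ell=1$ force $d^{(\nu)}k(N-k)\delta_{k+1}^{(\nu)}=\delta_k^{(\nu+1)}$, hence $\delta_{k+1}^{(\nu)}/\delta_k^{(\nu)}=(d^{(\nu)}k+c^{(\nu)})/(d^{(\nu)}k(N-k))$ — should turn $C^{(\nu)}_{n+i-1,n+i}$ into a multiple of $d^{(\nu)}(j-1)(N-j+1)$ and collapse $c_{n,i,j}$ to the claimed $\tfrac{n(N+1-i)(i-1)}{(j-1)(N-j+1)}+1$. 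The base value $\xi(0,1,1)$ is obtained by direct evaluation at $n=0$: there $P(x,0)=I$, so $R(0,0)$ is determined by the explicit $K_0$ of Lemma~\ref{lem Kn}, and comparison with Theorem~\ref{Coef R Laguerre} (using $L^{(\alpha)}_0(0)=1$) reads it off; alternatively one invokes Proposition~\ref{formula j=n+i}$(a)$.

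The routine regime is $n+i-j>0$. The real obstacle is the bookkeeping in the boundary step: choosing the entry of \eqref{equation ev 0} in which the bidiagonal/staircase structure kills the third term, expanding the superdiagonal entry of $C^{(\nu)}$ in closed form, and verifying that after feeding in \eqref{eq:condition_delta_Phi} and \eqref{eq:recursion-alphas} the coefficient $c_{n,i,j}$ telescopes to exactly the stated rational function; one should also cross-check the base value and the coefficient against Propositions~\ref{recurrencia xi general}$(a)$ and \ref{formula j=n+i} to confirm that the normalizations (the precise form of $K_n$, the sign of $a_k$) are consistent throughout.
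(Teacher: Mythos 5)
Your proposal follows the paper's proof essentially verbatim: for $n+i-j>0$ the identity is read off from Proposition \ref{prop: dH qtilde} via $\xi(n,i,j)=(d^{(\nu)})^{j}\widetilde{q}_j/\epsilon_j$, and for $j=n+i$ the paper likewise evaluates the $(i,n+i)$ entry of \eqref{equation ev 0}, where the staircase vanishing of $R(0,n)$, the bidiagonal form of $C^{(\nu)}$, the diagonality of $D^{(\nu)}$, and $A_{j,j-1}=-1$ collapse the identity to the stated two-term relation after expanding $((\Delta^{(\nu)})^{-1}A\Delta^{(\nu+1)})^{\ast}_{j-1,j}=d^{(\nu)}(j-1)(N-j+1)$. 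The one point to reconcile is the base value: your route (Theorem \ref{Coef R Laguerre} with $L^{(\nu+1)}_{0}(0)=1$, or Proposition \ref{formula j=n+i}($a$)) yields $\xi(0,1,1)=1$, whereas the theorem asserts $\tfrac{1}{\nu+2}$ and the paper's own computation uses $L^{(\nu+1)}_{1}(0)=\nu+2$ — an internal inconsistency of the paper you should flag rather than a gap in your argument.
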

	\begin{proof}
		If $n+i-j > 0$, by Proposition \ref{prop: dH qtilde} we have that 
		$$\xi(n,i,j)=\tfrac{(d^{(\nu)})^j}{\epsilon_j} \widetilde{q}_{j}= \tfrac{(d^{(\nu)})^{j}(\gamma+1)_{j-1}(-(N-1))_{j-1}}{\epsilon_{j}} T_{j-1}(\lambda(x^{(n,i)}); \gamma,\delta, N-1) $$
		with $\gamma= \tfrac{c^{(\nu)}}{d^{(\nu)}}$,  $\delta=n+i-N$ and $x^{(n,i)}=(\gamma+1)(N+i-2)-n(N-i)$ as asserted.	
		
		\medskip
		
		Now, if we take $j=n+i$ in the expression \eqref{equation ev 0}, we obtain
		\begin{eqnarray*}
			& &R_n'(0)_{i,j} (d^{(\nu)} j + c^{(\nu)}) (\nu + j +1)  + {R_n'(0)}_{i(j-1)} {((\Delta^{(\nu)})^{-1}A \Delta^{(\nu+1)})^{\ast}}_{(j-1)j}\\
			& & - ({R_n(0)}_{i,j} A_{j(j-1)}{((\Delta^{(\nu)})^{-1}A \Delta^{(\nu+1)})^{\ast}}_{(j-1),j} = n(d^{(\nu)}(J-N-1)-c^{(\nu)}) R(n,0)_{i,j}.
		\end{eqnarray*}

		By taking into account that $R_{n}(x)_{i,j}=L^{(\nu+j)}_{n+i-j}(x) \xi(n,i,j)$, 
		{\small $$L^{\alpha}_n(0)=\tfrac{(\alpha+1)_n}{n!} \qquad  \text{and}
			\qquad \tfrac{\partial}{\partial x}L^{\alpha}_n(x)=(-1) L^{\alpha+1}_{n-1}(x),$$}
		we have
		{\small \begin{eqnarray*}
				- \xi(n,i,j-1) {((\Delta^{(\nu)})^{-1}A \Delta^{(\nu+1)})^{\ast}}_{(j-1)j} &-& (\xi(n,i,j) A_{j(j-1)}{((\Delta^{(\nu)})^{-1}A \Delta^{(\nu+1)})^{\ast}}_{(j-1),j} \\
				&=& n(d^{(\nu)}(i-N-1)-c^{(\nu)}) \xi(n,i,j).
		\end{eqnarray*}}
		Recall that by \eqref{eq:recursion-alphas} we have that
		{\small\begin{equation*}
				{{({(\Delta^{(\nu)})}^{-1} A \Delta^{(\nu+1)} \big)}^{\ast}}_{(j-1,j)} = d^{(\nu)} (j-1) (N-j+1) \tfrac{\mu_{j-1}}{\mu_j},
		\end{equation*}}
		then, taking in account that $\mu_k=1$ for all $k$, we have
		{\small \begin{eqnarray*}
				- \xi(n,i,j-1) d^{(\nu)} (j-1) (N-j+1)  &-& (\xi(n,i,j) A_{j(j-1)}d^{(\nu)} (j-1) (N-j+1) \\
				&=& n(d^{(\nu)}(i-N-1)-c^{(\nu)}) \xi(n,i,j).
		\end{eqnarray*}}
		
		Thus, using the conditions \eqref{eq:condition_delta_Phi}, \eqref{eq:recursion-alphas}, and taking in account that $A_{s,s-1}=-1$, we obtain
		{\small \begin{equation*}
				- d^{(\nu)} (j-1) (N-j+1) \xi(n,i,j-1) =
				\Big(-nd^{(\nu)}(N+1-i)(i-1) - d^{(\nu)} (j-1) (N-j+1) \Big)\xi(n,i,j),
		\end{equation*}}
		as desired.
		
		The last assertion it follows from $$\xi(0,1,1)(\nu +2)=\xi(0,1,1) L^{1}_1(0)=R(0,0)_{1,1}=(K_1 P(0,0))_{1,1}=1$$
		
	\end{proof}

	\bibliographystyle{plain}
	\bibliography{biblio}		
	
	\appendix\section{}
	
	In this apprendix we give the proofs of the  Lemmas \ref{lema 7.1} and \ref{lema qj}.
	
	\begin{proof}[Proof of Lemma \ref{lema 7.1}]
		Let $P_{n}(x)=P(x,n)$ be the sequence of monic orthogonal polynomials 
		respect to the weight $W^{(\nu)}$ as in \eqref{def: Wnu}.
		Since $W_\mu^{(\alpha, \nu)}(x)=L_{\mu}^{(\alpha)}(0) W^{(\nu)}(x) L_{\mu}^{(\alpha)}(0)^*$ we have that 
		{\small $$\int_0^{\infty} P_n(x) (L_{\mu}^{(\alpha)}(0)^{-1})W_\mu^{(\alpha, \nu)}(x) (P_{m}(x)L_{\mu}^{(\alpha)}(0)^{-1})^* dx =\int_0^{\infty} P_n(x) W^{(\nu)}(x) P_{m}(x)^* dx= \delta_{n,m}\mathcal{H}_{n}.$$}
		Hence, if $P_{n}^{(\alpha,\nu)}(x)=L_{\mu}^{(\alpha)}(0) P_n(x) L_{\mu}^{(\alpha)}(0)^{-1} $ then  
		$P^{(\alpha,\nu)}_n(x)$ is the sequence of monic orthogonal polynomials with respect to the weight $W_\mu^{(\alpha, \nu)}$.
		Now, if $D_2$ is the operator defined in \eqref{def: D2}, the Corollary 6.3 of \cite{koelink2019matrix} implies that
		$P^{(\alpha,\nu)}_n D_2= n {\hat{K}_n}^{(\alpha,\nu)} P^{(\alpha,\nu)}_{n}$ 
		for certain matrix ${\hat{K}_n}^{(\alpha,\nu)}$ defined recursively in section 6 from \cite{koelink2019matrix}, and so 
		{\small \begin{equation}\label{equation PKhat}
				P_n L_{\mu}^{(\alpha)}(0)^{-1}D_2 L_{\mu}^{(\alpha)}(0)=nL_{\mu}^{(\alpha)}(0)^{-1}{\hat{K}_n}^{(\alpha,\nu)} L_{\mu}^{(\alpha)}(0) P_n.
		\end{equation}}
		By Proposition 6.1 of \cite{koelink2019matrix}, we obtain that
		{\small $${(L_{\mu}^{(\alpha)}(0))}^{-1}{\hat{K}_n}^{(\alpha,\nu)} L_{\mu}^{(\alpha)}(0)=d^{(\nu)}(J-(J+\nu+n)A-N-1)-c^{(\nu)},$$ }
		and by taking into account that $JA-AJ=A$, we have that
		{\small $$J-JA-(\nu+n)A-N-1=J-AJ-A-(\nu+n)A-N-1.$$}
		On the other hand, since $\Gamma_{n}=A(n+\nu+J+1)-(n+J)=-J+(n+\nu)A+A+AJ-n$, then
		{\small $${(L_{\mu}^{(\alpha)}(0)})^{-1}{\hat{K}_n}^{(\alpha,\nu)} L_{\mu}^{(\alpha)}(0)=d^{(\nu)}(-\Gamma_{n}-n-N-1)-c^{(\nu)}.$$}
		By \eqref{equation PKhat}, we have that
		{\small $$K_n R_n e^{-xA} {(L_{\mu}^{(\alpha)}(0))}^{-1}D_2 L_{\mu}^{(\alpha)}(0)= n {(L_{\mu}^{(\alpha)}(0))}^{-1}{\hat{K}_n}^{(\alpha,\nu)} L_{\mu}^{(\alpha)}(0)K_n R_n e^{-xA}$$}
		and thus
		{\small $$R_n e^{-xA} {(L_{\mu}^{(\alpha)}(0))}^{-1}D_2 L_{\mu}^{(\alpha)}(0)e^{xA}=n K_{n}^{-1}(d^{(\nu)}(-\Gamma_n-n-N-1)-c^{(\nu)})K_n R_n.$$}
		By recalling that $K_{n}^{-1}\Gamma_n K_n =-n-J$, we have that
		{\small \begin{equation}\label{equation R D2}
				R_n e^{-xA} {(L_{\mu}^{(\alpha)}(0))}^{-1}D_2 L_{\mu}^{(\alpha)}(0)e^{xA}=n(d^{(\nu)}(J-N-1)-c^{(\nu)})R_{n}.
		\end{equation}}
		Now, by taking into account that $D_2= \dfrac{d^{2}}{dx^{2}} \Phi^{\ast}(x) + \dfrac{d}{dx} \Psi^{\ast}(x)$ with $\Phi, \Psi$ polynomials of degree $2$ and $1$. In general, if $U(x)$ is a polynomial, we have that
		{\small \begin{eqnarray*}
				U(x) \cdot e^{-xA} {(L_{\mu}^{(\alpha)}(0))}^{-1}D_2 L_{\mu}^{(\alpha)}(0)e^{xA} & = & \dfrac{d^{2}}{dx^{2}} \Big( U(x) e^{-xA}\Big) {(L_{\mu}^{(\alpha)}(0))}^{-1} \Phi^{\ast}(x) L_{\mu}^{(\alpha)}(0)e^{xA}\\
				&& + \dfrac{d}{dx} \Big( U(x) e^{-xA}\Big) {(L_{\mu}^{(\alpha)}(0))}^{-1} \Psi^{\ast}(x) L_{\mu}^{(\alpha)}(0)e^{xA} \\
				& = & \Big( U''(x)-2 U'(x)A+ U(x) A^{2}\Big) e^{-xA} {(L_{\mu}^{(\alpha)}0))}^{-1} \Phi^{\ast}(x) L_{\mu}^{(\alpha)}(0)e^{xA} \\
				& & + \Big(  U'(x)- U(x) A\Big) e^{-xA} {(L_{\mu}^{(\alpha)}(0))}^{-1} \Psi^{\ast}(x) L_{\mu}^{(\alpha)}(0)e^{xA}.
		\end{eqnarray*}}
		Hence, we want to find some easy expression for
		$$e^{-xA} L(0)^{-1} \Phi^{\ast}(x) L(0) e^{xA}\quad \text{and} \quad e^{-xA} L(0)^{-1} \Psi^{\ast}(x) L(0) e^{xA}.$$
		Some similar expressions was studied by Koelink and Roman (see \cite{koelink2019matrix}).
		By Corollary 5.3 in \cite{koelink2019matrix}, we have that
		
		{ \small\begin{eqnarray*}
				L_{\mu}^{(\alpha)}(0)^{\ast} \Phi(x) ((L_{\mu}^{(\alpha)}(0))^{\ast})^{-1} &= & -d^{(\nu)}x^2 A^{\ast}+x (d^{(\nu)} J +c^{(\nu)})\\
				L_{\mu}^{(\alpha)}(0)^{\ast} \Psi(x) ((L_{\mu}^{(\alpha)}(0))^{\ast})^{-1}& = & x\Big(d^{(\nu)}(J-A^{\ast}(J+\nu+1) -N-1)-c^{(\nu)}) \Big) \\
				&  & + (\nu+J+1)(d^{(\nu)}J+c^{(\nu)})+(\Delta^{(\nu)})^{-1}A \Delta^{(\nu+1)}. 
		\end{eqnarray*}}
		Hence, we have that
		\small{\begin{eqnarray*}
				e^{-xA}L_{\mu}^{(\alpha)}(0)^{-1} \Phi^{\ast}(x)L_{\mu}^{(\alpha)}(0) e^{xA} &=& -d^{(\nu)}x^2 e^{-xA}A e^{xA}+xd^{(\nu)}e^{-xA}Je^{xA}+x c^{(\nu)} \\
				&=& -d^{(\nu)}x^2 A +xd^{(\nu)}(xA+J)+ xc^{(\nu)} \\
				&=& x(d^{(\nu)}J+c^{(\nu)}).
		\end{eqnarray*}}
		{\small \begin{eqnarray*}
				e^{-xA}L_{\mu}^{(\alpha)}(0)^{-1} \Psi^{\ast}(x)L_{\mu}^{(\alpha)}(0) e^{xA} &=& x\Big(d^{(\nu)}(xA+J-(xA+J+\nu+1)A-N-1)-c^{(\nu)}) \Big) \\
				& & + (d^{(\nu)}(xA+J)+c^{(\nu)})(\nu+xA+J+1)+e^{-xA} ((\Delta^{(\nu)})^{-1}A \Delta^{(\nu+1)})^{\ast} e^{xA}.
		\end{eqnarray*}}
		By evaluation in $x=0$ in \eqref{equation R D2} and by taking into account the above expressions, 
		we obtain \eqref{equation ev 0}, as desired.
	\end{proof}

	\begin{proof}[Proof of Lemma \ref{lema qj}]
		By taking into account that $(R_{n}(x))_{i,j}$ satisfies the expression given by Theorem \ref{Coef R Laguerre}, the $(i,j)$-coordinate of the matrix in the right hand of \eqref{equation ev 0} is 
		{\small $$ n(d^{(\nu)}(i-N-1)-c^{(\nu)}){(R_{n}(0))}_{i,j}= n(d^{(\nu)}(i-N-1)-c^{(\nu)})L^{(\nu+j)}_{n+i-j}(0) \xi(n,i,j).$$}
		On the other hand, for $n+i-j > 0$ the $(i,j)$-coordinate of the matrix in the left hand of \eqref{equation ev 0} is
		{\small \begin{eqnarray*}
				&& R_n'(0)_{i,j} (d^{(\nu)} j + c^{(\nu)}) (\nu + j +1)  + {R_n'(0)}_{i(j-1)} {((\Delta^{(\nu)})^{-1}A \Delta^{(\nu+1)})^{\ast}}_{(j-1)j}\\
				& & - {R_n(0)}_{i(j+1)} (d^{(\nu)} j + c^{(\nu)}) (\nu + j +1) A_{(j+1)j}  - ({R_n(0)}_{i,j} A_{j(j-1)}{((\Delta^{(\nu)})^{-1}A \Delta^{(\nu+1)})^{\ast}}_{(j-1),j}.
		\end{eqnarray*}}
		By taking into account that $R_{n}(x)_{i,j}=L^{(\nu+j)}_{n+i-j}(x) \xi(n,i,j)$, 
		{\small $$L^{\alpha}_n(0)=\tfrac{(\alpha+1)_n}{n!} \qquad  \text{and}
			\qquad \tfrac{\partial}{\partial x}L^{\alpha}_n(x)=(-1) L^{\alpha+1}_{n-1}(x),$$}
		we obtain that the above expression is equivalent to
		{\tiny \begin{eqnarray*}
				- n \Big(d^{(\nu)} (i-N-1)-c^{(\nu)}\Big) \tfrac{{(\nu+j+1)}_{n+i-j}}{(n+i-j)!} \xi(n,i,j) &=& \tfrac{{(\nu+j+2)}_{n+i-j-1}}{(n+i-j-1)!} (d^{(\nu)} j + c^{(\nu)}) (\nu +j +1) \xi(n,i,j)\\
				&& + \tfrac{{(\nu+j+1)}_{n+i-j}}{(n+i-j)!} {{\big( {(\Delta^{(\nu)})}^{-1} A \Delta^{(\nu+1)} \big)}^{\ast}}_{(j-1,j)} \xi(n,i,j-1)\\
				&& + \tfrac{{(\nu+j+2)}_{n+i-j-1}}{(n+i-j-1)!} (d^{(\nu)} j + c^{(\nu)}) (\nu+j+1) \xi(n,i,j+1)\\
				&& + \tfrac{{(\nu+j+1)}_{(n+i-j)}}{(n+i-j)!} A_{j(j-1)} {{\Big({(\Delta^{(\nu)})}^{-1} A \Delta^{(\nu)}\Big)}^{\ast}}_{(j-1)j} \xi(n,i,j).
		\end{eqnarray*}}
		Thus, by taking into account that $(\nu+j+2)_{n+i-j-1}(\nu +j +1)= (\nu+j+1)_{n+i-j-1}$, and  multiplying both sides by $\tfrac{(n+i-j-1)!}{(\nu+j+1)_{n+i-j-1}}$, we obtain 
		{\small \begin{eqnarray*}
				- n \Big(d^{(\nu)} (i-N-1)-c^{(\nu)}\Big) \tfrac{(n+i-j-1)}{n+i-j} \xi(n,i,j)& =&  (d^{(\nu)} j + c^{(\nu)}) \xi(n,i,j)\\
				&&+   \tfrac{(n+i-j-1)}{n+i-j} {{\big( {(\Delta^{(\nu)})}^{-1} A \Delta^{(\nu+1)} \big)}^{\ast}}_{(j-1,j)} \xi(n,i,j-1)\\
				&& +   (d^{(\nu)} j + c^{(\nu)}) \xi(n,i,j+1)\\
				& &+ \tfrac{(n+i-j-1)}{n+i-j} A_{j(j-1)} {{\Big({(\Delta^{(\nu)})}^{-1} A \Delta^{(\nu)}\Big)}^{\ast}}_{(j-1)j} \xi(n,i,j).\\ 
		\end{eqnarray*}}
		Finally, we obtain that 
		{\small \begin{eqnarray*}
				& & \Big((n+i-j)(d^{(\nu)} j + c^{(\nu)}) + A_{j(j-1)} {{\big({(\Delta^{(\nu)})}^{-1} A \Delta^{(\nu)}\big)}^{\ast}}_{(j-1)j} + n (d^{(\nu)} (i-N-1)-c^{(\nu)}) \Big) \xi(n,i,j)\\
				& & + {{\big( {(\Delta^{(\nu)})}^{-1} A \Delta^{(\nu+1)} \big)}^{\ast}}_{(j-1,j)} \xi(n,i,j-1) +  (n+i-j) (d^{(\nu)} j + c^{(\nu)}) \xi(n,i,j+1) = 0.
		\end{eqnarray*}}
		By \eqref{eq:recursion-alphas} we have that $\tfrac{{\delta^{(\nu)}_{k+1}}}{{\delta^{(\nu+1)}_k}} = \tfrac{\mu_{k+1}^2}{d^{(\nu)} k (N-k) \mu_{k}^2}$,
		and then
		{\small\begin{eqnarray*}
				{{({(\Delta^{(\nu)})}^{-1} A \Delta^{(\nu+1)} \big)}^{\ast}}_{(j-1,j)} &=& \tfrac{{\delta^{(\nu+1)}_{j-1}}}{\delta^{(\nu)}_j} A_{j(j-1)} \\
				&=& d^{(\nu)} (j-1) (N-j+1) \tfrac{{\mu_{j-1}}^{2}}{{\mu_j}^{2}} \tfrac{\mu_j}{\mu_{j-1}}\\
				&=& d^{(\nu)} (j-1) (N-j+1) \tfrac{\mu_{j-1}}{\mu_j}.
		\end{eqnarray*}}
		Thus, we can rewrite the above equation as follows 
		{\small \begin{eqnarray*}
				& & \Big((n+i-j)(d^{(\nu)} j + c^{(\nu)}) + d^{(\nu)} (j-1) (N-j+1) + n (d^{(\nu)} (i-N-1)-c^{(\nu)}) \Big) \xi(n,i,j)\\
				& & + d^{(\nu)} (j-1) (N-j+1) \tfrac{\mu_{j-1}}{\mu_j} \xi(n,i,j-1) +  (n+i-j) (d^{(\nu)} j + c^{(\nu)}) \xi(n,i,j+1) = 0.
		\end{eqnarray*}}
		Now, if we consider $M_j=(n+i-j) (d^{(\nu)} j + c^{(\nu)})$. 
		Since $\epsilon^{(n,i)}_j \xi(n,i,j) = q_j^{(n,i)}$,
		we can rewrite the above equation as follows
		{\small \begin{eqnarray*}
				\Big((n+i-j)(d^{(\nu)} j + c^{(\nu)}) + d^{(\nu)} (j-1) (N-j+1) + n (d^{(\nu)} (i-N-1)-c^{(\nu)}) \Big) \tfrac{q_j}{\epsilon_j} \\ 
				+ d^{(\nu)} (j-1) (N-j+1) \tfrac{\mu_{j-1}}{\mu_j} \tfrac{q_{j-1}}{\epsilon_{j-1}} +  M_j \tfrac{q_{j+1}}{\epsilon_{j+1}} = 0.
		\end{eqnarray*}}
		If we multiply by $\epsilon_j$, we have that
		{\small \begin{eqnarray*}
				\Big((n+i-j)(d^{(\nu)} j + c^{(\nu)}) + d^{(\nu)} (j-1) (N-j+1) + n (d^{(\nu)} (i-N-1)-c^{(\nu)}) \Big) q_j \\ 
				+ d^{(\nu)} (j-1) (N-j+1) \tfrac{\mu_{j-1}}{\mu_j} \tfrac{\epsilon_j}{\epsilon_{j-1}} q_{j-1} +  M_j \tfrac{\epsilon_j}{\epsilon_{j+1}} q_{j+1} = 0.
		\end{eqnarray*}}
		By Lemma \ref{lema epsilon's}, we have that $M_j \tfrac{\epsilon_j}{\epsilon_{j+1}} =1 $ and so we obtain 
		{\small\begin{eqnarray*}
				&&d^{(\nu)} \Big((n+i-j)( j + \tfrac{c^{(\nu)}}{d^{(\nu)}}) +  (j-1) (N-j+1) + n  ( (i-N-1)-\tfrac{c^{(\nu)}}{d^{(\nu)}}) \Big) q_j \\
				&+& d^{(\nu)} (j-1) (N-j+1) \tfrac{\mu_{j-1}}{\mu_j}  (n+i-j+1)  \big(d^{(\nu)} (j-1) + c^{(\nu)} \big)  q_{j-1} +   q_{j+1} = 0,
		\end{eqnarray*}}
		which is equivalent to 
		{\small $$ E_j q_j +  F_j  q_{j-1} + \tfrac{1}{d^{(\nu)}}   q_{j+1} = 0,$$}
		with
		{\small \begin{align*}
				E_j &=(n+i-j) ( j + \tfrac{c^{(\nu)}}{d^{(\nu)}}) +  (j-1) (N-j+1) + n  ( (i-N-1)-\tfrac{c^{(\nu)}}{d^{(\nu)}}),\\
				F_j &=(j-1) (N-j+1) \tfrac{\mu_{j-1}}{\mu_j}  (n+i-j+1)  \big(d^{(\nu)} (j-1) + c^{(\nu)} \big),    
		\end{align*}}
		as asserted.
	\end{proof}

\end{document}